\newtheorem{theorem}{Theorem}
\newtheorem{lemma}[theorem]{Lemma}
\newtheorem{corollary}[theorem]{Corollary}
\newtheorem{proposition}[theorem]{Proposition}
\theoremstyle{definition}
\newtheorem{definition}[theorem]{Definition}
\newtheorem{example}[theorem]{Example}
\newtheorem{remark}[theorem]{Remark}
\numberwithin{theorem}{section}
\numberwithin{equation}{section}
\theoremstyle{plain}
\theoremstyle{definition}
\newcommand{\B}{\mathbb{B}}
\newcommand{\N}{\mathbb{N}}
\newcommand{\R}{\mathbb{R}}
\newcommand{\C}{\mathbb{C}}
\newcommand{\e}{\varepsilon}
\newcommand{\Om}{\Omega}
\begin{document}

\thanks{The second author was partially supported by the ANR project GRACK}

\keywords{Complex Monge-Amp\`ere equations, complex Hessian equations, Dirichlet problem, subsolution,  capacity estimates.}

\subjclass[2010]{31C45, 32U15, 32U40, 32W20, 35J66, 35J96}

\title[The continuous subsolution problem]{ The  Continuous Subsolution problem  \\ for Complex Hessian Equations }

\author{Mohamad Charabati }

\address{Al-Bath University, Homs p. 77, Syrian Arab Republic}

\email{charabati.mohamad@gmail.com}

\author{Ahmed Zeriahi}

\address{Institut de Mathématiques de Toulouse; UMR 5219, Université de Toulouse; CNRS, UPS, 118 route de Narbonne, F-31062 Toulouse Cedex 9, France}

\email{ahmed.zeriahi@math.univ-toulouse.fr}

\date{\today}

\begin{abstract}
 Let  $\Omega \subset \mathbb C^n$ be a bounded strictly $m$-pseudoconvex domain ($1\leq m\leq n$) and  $\mu$ a positive Borel measure on $\Omega$. 
 
  We   study the Dirichlet problem for  the complex Hessian equation $(dd^c u)^m \wedge \beta^{n - m} = \mu$ on $\Omega$. 
 
 First we give a sufficient condition on the "modulus of diffusion" of the measure $\mu$ with respect to the $m$-Hessian capacity which guarantees the existence of a continuous solution to the associated Dirichlet problem with a continuous boundary datum.
 
  As an application, we prove that if the equation has a continuous $m$-subharmonic subsolution whose modulus of continuity satisfies a Dini type condition, then the equation has a continuous solution with an arbitrary continuous boundary datum. Moreover when the measure has a finite mass on $\Omega$, we give a precise quantitative estimate on the modulus of continuity of the solution. 
 
  One of the main steps in our proof is to establish a new capacity estimate providing a precise estimate of the modulus of diffusion of the  $m$-Hessian measure of a continuous $m$-subharmonic function $\varphi$ in $\Omega$ with zero boundary with respect to the $m$-Hessian capacity in terms of the modulus of continuity of $\varphi$.
   Another important ingredient is a new weak stability estimate for the $m$-Hessian measure of a continuous $m$-subharmonic function in $\Omega$. 
\end{abstract} 

\maketitle

\tableofcontents

\section*{Introduction}

Complex Hessian equations are important examples of fully non-linear PDE's of second order on complex manifolds.  They interpolate between (linear) complex Poisson equations ($m = 1$) and (non linear) complex Monge-Amp\`ere equations ($m=n$).
 They arise in many geometric problems, including the $J$-flow
\cite{SW} and quaternionic geometry \cite{AV}. They have attracted the attention of many researchers these last years as we will explain below. 

 \subsection{Statement of the problem}
 
Let $\Omega \Subset \C^n$ be a bounded domain and $m$  a fixed integer such that $1 \leq m \leq n$. 
We consider the following general Dirichlet problem for the complex $m$-Hessian equation : 

\smallskip
\smallskip

{\it The Dirichlet problem:} Let $g \in \mathcal{C}^{0} (\partial \Omega)$ be a continuous function (the boundary  datum) and $\mu$ a  positive Borel measure  on $\Omega$ (the right hand side). The Dirichlet problem with boundary datum $g$ and right hand side $\mu$ consists in finding a function $ U \in \mathcal{SH}_m (\Omega) \cap \mathcal C^{0} (\Omega)$  satisfying the following properties :   

\begin{equation}\label{eq:DirPb}
\left\{\begin{array}{lcl} 
 (dd^c U)^m \wedge \beta^{n - m} = \mu,  &\hbox{on}\  \Omega, \, \, \, \, \, (\dag)\\
  U_{\mid  \partial \Omega} = g, & \hbox{on}\  \partial \Omega, \, \, \, (\dag \dag)
\end{array}\right.
\end{equation}
 
The  equation $(\dag)$ must be understood in the sense  of currents  on $\Omega$ (see section 2). 

The equality $ (\dag \dag)$ means that $\lim_{z \to \zeta} U (z) = g (\zeta)$ for any $\zeta \in \partial \Omega$. 

Observe that the comparison principle implies the uniqueness of the solution to the Dirichlet problem  (\ref{eq:DirPb})  when it exists. We will denote it by $U_{g,\mu} = U^{\Omega}_{g,\mu}$.

Recall the usual notations $d = \partial + \bar{\partial}$ and $d^c := (i\slash 2)  ( \bar{\partial} - \partial)$ so  that
$dd^c = i  \partial  \bar{\partial}$.  Given  a real function $u \in \mathcal{C}^2 (\Omega)$,  for each integer $1 \leq k \leq n$, we denote by $\sigma_k (u)$ the continuous function defined 	at each point $z \in  \Omega$ as the $k$-th symmetric polynomial of the eigenvalues $\lambda_1 (z) \leq \cdots \leq \lambda_n(z)$ 
of the complex Hessian matrix $  \left(\frac{\partial^2 u }{\partial z_j \partial \bar{z}_k} (z)\right)$ of $u$ i.e. 
$$
\sigma_k (u) (z) := \sum_{1 \leq j_1 < \cdots < j_k \leq n} \lambda_{j_1} (z) \cdots \lambda_{j_k} (z), \, \,  \, \, z \in \Omega.
$$
A simple computation shows that  
$$
(dd^c u)^k \wedge \beta^{n - k} = \frac{(n-k)! \, k!}{n!}  \, \sigma_k (u)  \,  \beta^n, \, \, 
$$
pointwise on $\Omega$ for $1 \leq k \leq m$, where $\beta := dd^c \vert z\vert^2$ is the usual K\"ahler form on $\C^n$.

We say that a real function $u \in \mathcal{C}^2 (\Omega)$ is $m$-subharmonic in $\Omega$ if for any $1 \leq k \leq m$, we have $\sigma_k (u) \geq 0$ pointwise  in $\Omega$

Observe that  the function $u$ is $1$-subharmonic  on $\Omega$ ($m= 1$) if it is  subharmonic in $\Omega$ and $\sigma_1 (u) = (1 \slash 4) \Delta u$, while $u$ is  $n$-subharmonic  in $\Omega$ ($m = n$) if  $u$ is  plurisubharmonic in $\Omega$ and $\sigma_n (u)  = \mathrm{det}  \left(\frac{\partial^2 u }{\partial z_j \partial \bar{z}_k} (z)\right)$.

It was shown by Z. B\l ocki  in \cite{Bl05}, that it is possible to define a general notion of $m$-subharmonic function using the concept of $m$-positive currents (see section 2). Moreover, identifying positive $(n,n)$-currents with positive Radon measures, it is possible to define the $k$-Hessian measure $(dd^c u)^k \wedge \beta^{n - k}$ when $1 \leq k \leq m$ for any (locally) bounded $m$-subharmonic function $u$ on $\Omega$ (see section 2). 

\smallskip

Several questions related to the Dirichlet problem  (\ref{eq:DirPb}) can be addressed.

1. The first problem is to find a necessary and sufficient condition on $\mu$ which guarantees the existence of a solution to the Dirichlet problem (\ref{eq:DirPb}).

2.  The second problem is to study the regularity of the solution $U^{\Omega}_{g,\mu}$ in terms of the regularity of the data $(g,\mu)$.


\smallskip

When $\Omega$ is a smooth strictly $m$-pseudoconvex domain, the boundary data is smooth and the measure $\mu$ has a smooth positive density, S. Li proved in \cite{Li04} that the Dirichlet problem has a unique smooth solution.

Later Z. Blocki  introduced in \cite{Bl05}  the notion of weak solution for the Hessian equation and solved the Dirichlet problem for the homogenuous complex Hessian equation.
When $\mu = f \lambda_{2n}$ has a continuous density $f \in C^0(\bar \Omega)$, M. Charabati solved  the Dirichlet problem (\ref{eq:DirPb}) in \cite{Ch16a}  using the Perron method and gave a precise control of the modulus of continuity of the solution in terms of the moduli of continuity of the data $(f,g)$.

When $\mu = f \lambda_{2n}$ has a density  $ f \in L^p (\Omega)$ with $p > n\slash m$, S. Dinew and S. Ko\l odziej proved in \cite{DK14} that  the Dirichlet problem (\ref{eq:DirPb}) has a continuous solution. Assuming $g$ is Hölder continuous, N.C. Nguyen \cite{N14} proved that the solution is Hölder continuous under and additional condition on the growth of $f$ at the boundary and M. Charabati \cite{Ch16b} proved the Hölder continuity in the general case $ f \in L^p (\Omega)$ with $p > n\slash m$.

\smallskip

When $\mu$ is a {\bf more general} positive Borel measure on $\Omega$ and $g$ is a continuous boundary datum, the Dirichlet problem is much more difficult. A necessary condition for the existence of a solution to (\ref{eq:DirPb}) is  the existence of a subsolution.  

For the complex Monge-Amp\`ere equation, S. Ko\l odziej proved in \cite{Kol95} that if the Dirichlet problem (\ref{eq:DirPb}) has a bounded subsolution, then it has a bounded solution. The same result for the Hessian equation was proved by  N. C. Nguyen in \cite{N13}.

Here we will consider the following subsolution problem for the complex Hessian equation.  

\smallskip

{\it The continuous subsolution problem :}
 Let $\mu$ be a positive Borel measure on $\Omega$.  Assume that there exists a  function $\varphi \in  \mathcal{SH}_m (\Omega) \cap \mathcal{C}^{0} (\bar{\Omega})$ satisfying the following conditions :
\begin{equation} \label{eq:subsolution}
\mu \leq (dd^c \varphi)^m \wedge \beta^{n - m}, \, \, \mathrm{on} \, \, \, \, \Omega,  
\, \, \, \mathrm{and} \, \, \varphi_{\mid  \partial \Omega} \equiv 0.
\end{equation}

$(i)$  Does the  Dirichlet problem (\ref{eq:DirPb}) admit a continuous  solution $U_{g,\mu} \in \mathcal{SH}_m (\Omega) \cap \mathcal{C}^{0} (\bar{\Omega})$ for any continuous boundary datum $g$? 
 
$(ii)$ Is it possible to estimate  the modulus of continuity of the solution $U_{g,\mu}$ in terms of the modulus of continuity of  $\varphi$ and $g$ and some characteristic function related to $\mu$ ?

\smallskip

The continuous subsolution problem has attracted a lot of attention these last years. It was formulated 
by Ko\l odziej  for the complex Monge-Amp\`ere equation and  addressed in \cite{DGZ16} in the case of the existence of H\"older continuous subsolution. 
 The H\"older continuous subsolution problem was solved  recently for positive Borel measures with finite mass in \cite{BZ20} . 

Recently S. Ko\l odziej and  N.C. Nguyen gave a  Dini type sufficient condition on the modulus of continuity of the subsolution which guarantees the existence of a continuous solution for the complex Monge-Amp\`ere equation and  Hessian equations under the restrictive assumption that the measure $\mu$ is compactly supported in $\Omega$ (see \cite{KN20b}, \cite{KN20a})

The main goal of this paper is  to generalize these results by removing the assumption on the compactness of the support and to extend  them to the complex Hessian equation using an original idea from \cite{KN20b} . Moreover we are able to improve the quantitative estimate on the modulus of continuity of the solution obtain in the Hölder continuous case by  \cite{BZ20} and  also those of \cite{KN20b}, \cite{KN20a} in the  case of a compactly supported measure.
\subsection{Main results} 
 Our first main result gives a sufficient condition on the Borel measure $\mu$ in terms of its diffusion with respect to the $m$-Hessian capacity which guarantees the existence of a continuous solution to the Dirichlet problem (\ref{eq:DirPb}).

  \smallskip
 \smallskip
 
{\bf Theorem 1}. {\it  Let  $\Omega \Subset \C^n$ be a bounded strictly $m$-pseudoconvex  domain and $\mu$ be a positive Borel measure on $\Omega$ with finite mass. Assume that  there exists a constant $A > 0$ such that for any compact set $K \subset \Omega$,
$$
\mu (K) \leq A \text{c}_m (K) \gamma (\text{c}_m (K)),
$$
where $\gamma : \R^+ \longrightarrow \R^+$ is a continuous increasing function on $\R^+$ which  satisfies  the following  Dini type condition
 \begin{equation} \label{eq:DiniConditionMu}
\int_{0^+} \frac{\gamma (t)^{1 \slash m}}{t} d t < + \infty.
 \end{equation}
 Then   for any    continuous boundary datum $g \in \mathcal C^0 (\partial \Omega)$, the Dirichlet problem (\ref{eq:DirPb}) admits a unique solution  $U = U_{\mu,g} \in \mathcal{SH}_m (\Omega) \cap \mathcal C^0  (\bar{\Omega})$.}

The capacity $\text{c}_m (K) = \text{c}_m (K,\Omega)$ will be defined in the next section. 

Our second main result gives a new comparison inequality which will be applied to  positive Borel measures without restriction on their support nor on their mass. 

Let us fix $0 < r < m \slash (n-m)$ and $0 < b < 2 n$ and define the following functions for $t \in \R^+$:
\begin{equation}\label{eq:estimatefunction}
\ell_m (t) := \left\{\begin{array}{lcl} 
 t^{r}, \,  \, \, \, \, \, \, \, \, \, \,   \, \, \, \,  \, \, \, \, \, \, \, \, \, \,    \,  \, \, \, \, \, \, \hbox{if} \, \,  \, \, \, 1 \leq m < n, \\
   \exp ( -b \, t^{- 1 \slash n}), \, \, \,  \hbox{if} \, \, \, \, \,  m = n.
\end{array}\right.
\end{equation}
 
 \smallskip
 
 {\bf Theorem 2}.{ \it Let $\Omega \Subset \C^n$ be a bounded $m$-hyperconvex  domain and $\varphi \in \mathcal{SH}_m (\Omega)  \cap \mathcal{C}^0 (\bar{\Omega})$ with $\varphi = 0$ on $\partial \Omega$. 
 
 Then  there exists a constant $B = B (m,n, \varphi,\Omega) > 0$ such that for any compact set $K \subset \Omega$, 
$$
\int_{K}(dd^c\varphi)^m\wedge\beta^{n-m}  \leq  B \, \left\{\vartheta_m  (c_m (K)) + \left[\vartheta_m  (c_m (K))\right]^m\right\} \, c_m (K),
$$
where
$\vartheta_m (t) := \kappa_\varphi \circ \theta_m \circ \ell_m (3^m t)$ , $\kappa_\varphi$ is the modulus of continuity of $\varphi$ and $\theta_m$ is an  inverse  of the function $t \longmapsto t^{2m} \kappa_\varphi (t)^{1 - m}$ . }

 \smallskip
 \smallskip
  The constant $B$ in the theorem is given explicitly by the formula (\ref{eq:finalConst}).
 
 Theorem 2 extends an estimate obtained by S. Kolodziej and C. Nguyen for measures with compact support \cite{KN20b}. It improves the estimate proved in \cite{BZ20} in the H\"older continuous case.

 \smallskip
 \smallskip

 As a consequence of Theorem 1 and Theorem 2, we will deduce the following two results which solves the continuous subsolution problem under a Dini type condition on the modulus of continuity of the subsolution.

Since the two results are different for complex Monge-Amp\`ere equations and Hessian equations, we will state them separately.
 \smallskip
 
 \smallskip
  {\bf Theorem 3}.{ \it Let $\Omega \Subset \C^n$ be a  bounded strictly $m$-pseudoconvex  domain  with $1 \leq  m < n$ and $\mu $ a positive Borel measure on $\Omega$. 
  
  Assume that there exists $\varphi\in \mathcal{SH}_m(\Omega)\cap\mathcal{C}^{0}(\overline\Omega)$ such that  
\begin{equation} \label{eq:subsol2}
 \mu \leq (dd^c\varphi)^m\wedge\beta^{n-m}, \, \, \, \mathrm{weakly \, \,  on} \, \,  \Omega \, \, \, \mathrm{and} \, \, \varphi_{\mid  \partial \Omega} \equiv 0,
\end{equation} 
 and the modulus of continuity $\kappa_\varphi$ of $\varphi$ satisfies the following Dini type condition:
 \begin{equation} \label{eq:DC1}
 \int_{0^+} \frac{\left[\kappa_\varphi (t)\right]^{1 \slash m}}{t}  d t< + \infty, 
 \end{equation}

 Then for any continuous function $g \in \mathcal{C}^{0} (\partial \Omega)$, there exists a unique function $U = U_{g,\mu} \in \mathcal{SH}_m (\Omega) \cap \mathcal{C}^0 ({\Omega})$ such that  
 $$
 (dd^c U)^m\wedge\beta^{n-m} = \mu, \, \, \, \mathrm{weakly \, \,  on} \, \,  \Omega \, \, \, \mathrm{and} \, \, \, U_{\mid  \partial \Omega}  = g.
 $$

Moreover if $\mu (\Omega) < + \infty$, the $\widehat{\kappa}$-modulus of continuity of $U$ satisfies the following esstimate
 
$$
\widehat{\kappa}_U (\delta) \leq C  \, \tilde{\kappa}_m (\delta),
$$ 
 where  $\tilde{\kappa}_m (\delta)$ is given by the equation (\ref{eq:MC-Solution}) and $C = C(m,n, \mu, \varphi, \Omega) > 0$ is a uniform constant.} 
 
  \smallskip
 \smallskip
 
 For complex Monge-Amp\`ere equations (the case $m = n$) we obtain a much better result.
   
 \smallskip
 \smallskip
 
 {\bf Theorem 4}. { \it Let $\Omega \Subset \C^n$ be a  bounded strictly pseudoconvex  domain and $\mu $ a positive Borel measure on $\Omega$. 
 
 Assume that there exists $\varphi\in \mathcal{PSH} (\Omega)\cap\mathcal{C}^{0}(\overline\Omega)$ such that  
\begin{equation} \label{eq:subsol1}
 \mu \leq (dd^c\varphi)^n\, \, \, \, \mathrm{weakly \, \, on} \, \,  \Omega \, \, \, \mathrm{and} \, \, \varphi_{\mid  \partial \Omega} \equiv 0,
\end{equation} 
 and  the modulus of continuity $\kappa_\varphi$ of $\varphi$ satisfies the following Dini type condition:
 \begin{equation} \label{eq:DC2}
 \int_{0^+} \frac{\left[\kappa_\varphi (t)\right]^{1 \slash n}}{t \vert \log t \vert} d t< + \infty. 
 \end{equation}

 Then for any continuous function $g \in \mathcal{C}^{0} (\partial \Omega)$, there exists a unique function $U = U_{g,\mu} \in \mathcal{PSH} (\Omega) \cap  \mathcal{C}^0 ({\Omega})$ such that  
 $$
 (dd^c U)^n  = \mu, \, \, \, \mathrm{weakly \, \, on} \, \,  \Omega, \, \,  \, \, \, \mathrm{and} \, \, \, U_{\mid  \partial \Omega}  = g.
 $$

 Moreover if $\mu (\Omega) < + \infty$, the $\widehat{\kappa}$-modulus of continuity of $U$ satisfies the following estimate 
$$
\widehat{\kappa}_U (\delta) \leq C  \, \tilde{\kappa}_n (\delta),
$$ 
where  $\tilde{\kappa}_n (\delta)$ is given by the equation (\ref{eq:MC-Solution}) with $m = n$ and $C = C(n, \mu,\varphi,\Omega) > 0$ is a uniform constant.}

 \smallskip
 \smallskip
 
  Here the $\widehat{\kappa}$-modulus of continuity of  a given function $\phi:\Omega \longrightarrow \R$ is defined for $0 < \delta <\delta_0$  by
 \begin{equation}
\widehat{\kappa}_{\phi} (\delta) := \sup_{z \in \Omega_{\delta}} (\widehat{\phi}_\delta (z) - \phi (z)), \, \, \, 
\end{equation}
where
\begin{equation} 
\widehat{\phi}_\delta (z) := \int_{\B} \phi (z + \delta \xi) d \lambda_{\B} (\xi), 
\end{equation} 
for $z \in \Omega_\delta := \{z \in \Omega ; \text{dist} (z,\partial \Omega) > \delta \}$ and $0 < \delta < \delta_0$.
   
\smallskip 
 
 The first part of Theorem 4 was proved by S. Koldziej and C. Nguyen compactly supported  positive Borel measures (see \cite{KN20a}).

 \smallskip
 \smallskip
    
 Let us mention that  it is possible to give a uniform control on the full modulus of continuity $\kappa_U$ of the solution $U$ in Theorem 3 and Theorem 4 under some extra condition on the modulus of continuity of the data $g$ and the subsolution $\varphi$ (see Lemma \ref{lem:sup-mean}). Also when $g \in C^{1,1} (\partial \Omega)$ (e.g. $g\mid_{ \partial \Omega} \equiv 0$),  we can improve the estimate on the $\widehat{\kappa}$-modulus of continuity of the solution $U$  (see Theorem \ref{thm:ModC+}).  
 
 \subsection{Organization of the paper}
 
In section 1, we give  the necessary definitions and preliminaries that will be needed in the sequel. 

 \smallskip
 
Section 2 contains some new results which will play a crucial role in the proofs of our results. We first give a new estimate on the behaviour near the boundary of a domain of the $m$-Hessian measure of a continuous $m$-potential in terms of its modulus of continuity and the $m$-Hessian capacity on the domain. Then  we prove  continuity properties of these measures acting on normalized potentials.

 \smallskip
 
 Section 3 contains the proof of Theorem 1. We first give a priori uniform estimates and then prove continuity of the Hessian potentials of  Borel measures which are diffuse with respect to the corresponding capacity (see definition 3.1). Then we  establish a new stability estimate that improves the one obtained  in \cite{BZ20} under a weaker domination condition on the measure. This estimate is inspired by  an estimate proved  in \cite{BGZ08} in the spirit of \cite{EGZ09} in the case of compact K\"ahler manifolds.

 \smallskip
 
Section 4 contains the proof of Theorem 2 as well as some consequences.  The proof of this theorem  consists in extending a similar result proved in \cite{BZ20} in the H\"older continuous case.

 \smallskip
 
Section 5 contains the  proofs of Theorem 3 and Theorem 4. These proofs are done at the same time  in several steps following the same scheme. We first use the weak stabilty result Theorem \ref{thm:stability} to reduce the estimation of the modulus of continuity of the solution to the Dirichlet problem  (\ref{eq:DirPb}) to the estimate of the $L^m$-norm of the difference of two normalized potentials with respect to the measure $\mu$ using its domination by  the Hessian measure of the subsolution. Then we use results from Section 3 to estimate the  $L^m$-norm with respect to $\mu$ in terms of the $L^m$-norm with respect to the Lebesgue measure following a scheme which has become standard and which was initiated in \cite{EGZ09} and completed in  \cite{GKZ08} (see also \cite{DDGKPZ15} and \cite{GZ17}).

\section{Preliminary results}
 In this section, we recall the basic properties of $m-$subharmonic functions and some known results we will use  throughout the paper. 
 
\subsection{Hessian potentials}
 For a hermitian $n \times n$ matrix $a = (a_{j,\bar k})$ with complex coefficients, we denote by $\lambda_1, \cdots \lambda_n$ the eigenvalues of the matrix $a$. For any $1 \leq k \leq n$ we define the $k$-th trace of $a$ by the formula

$$
S_k (a) := \sum_{1 \leq j_1 < \cdots < j_k \leq n} \lambda_{j_1} \cdots \lambda_{j_k},
$$
which is the $k$-th elementary symmetric polynomial of the eigenvalues $(\lambda_1, \cdots, \lambda_n)$ of $a$.

 Recall that $d = \partial + \bar{\partial}$ and define $d^c := (i\slash 2) (\bar{\partial} - \partial$ so that $dd^c =  i \partial \bar{\partial}$ and denote by 
 $$
 \beta := dd^c \vert z\vert^2
 $$
  the standard K\"ahler form on $\C^n$.
 
 Let $\C^n_{(1,1)} $ be the space of real $(1, 1)$-forms on $\C^n$  with constant
coefficients, and define the cone of $m$-positive  $(1,1)$-forms on $\C^n$ by

$$
\Theta_m := \{\omega \in \C^n_{(1,1)}  \, ; \,  \omega \wedge  \beta^{n - 1} \geq 0, \cdots,  \omega^m \wedge  \beta^{n - m} \geq 0\}.
$$

\begin{definition}
1) A smooth $(1,1)$-form $\omega$ on $\Omega$ is said to be $m$-positive on $\Omega$ if for any $z \in \Omega$, $\omega (z) \in \Theta_m$.

2) A function $u:\Omega \rightarrow \mathbb{R}\cup\{-\infty\}$ is said to be  $m-$subharmonic  on $\Omega$ if it is subharmonic on $\Omega$ (not identically $-\infty$ on any component) and  for any collection of smooth $m-$positive $(1,1)-$forms  $\omega_1,...,\omega_{m-1}$ on $\Omega$, the following inequality holds in the sense of currents 
  $$
  dd^c u\wedge \omega_1\wedge...\wedge \omega_{m-1} \wedge \beta^{n-m}\geq 0,
  $$
  in the sense of currents on $\Omega$.
\end{definition}

We denote by $\mathcal{SH}_m (\Omega) $ the positive convex cone of $m$-subharmonic functions on $\Omega$ which are not identically $-\infty$ on any component of $\Omega$.  These are the $m$-Hessian potentials.

We give below the most basic properties of $m$-subharmonic functions that will be used in the sequel (see \cite{Bl05}, \cite{Lu12}).

\begin{proposition}\label{prop:basic}

\noindent 1.  If $u\in \mathcal{C}^2(\Omega)$, then $u$ is  $m$-subharmonic on $\Omega$ if and only if $(dd^c u)^k\wedge \beta^{n-k}\geq0$
pointwise on $\Omega$ for $k=1, \cdots, m$.

 \noindent 2. $\mathcal{PSH}(\Omega)=\mathcal{SH}_n(\Omega)\subsetneq \mathcal{SH}_{n-1}(\Omega)\subsetneq...\subsetneq \mathcal{SH}_1(\Omega)=\mathcal{SH}(\Omega) $.
 
\noindent 3.  $\mathcal{SH}_m(\Omega) \subset L^1_{loc} (\Omega)$ is a positive convex cone. 
  
\noindent 4.  If $u$ is $m$-subharmonic on $\Omega$ and $f: I \rightarrow\mathbb{R}$ is a  convex, increasing function on some interval containing the image of $u$, then $f \circ u$ is $m$-subharmonic on $\Omega$.

\noindent 5. The limit of a decreasing sequence of  functions in $\mathcal{SH}_m(\Omega)$ is $m$-subharmonic on $\Omega$ when it is not identically $- \infty$ on any component.
 
\noindent 6.  Let $u$ be an $m$-subharmonic function on $\Omega$. Let $v$ be an $m$-subharmonic function on a domain $\Omega'  \subset \C^n$ with $\Omega \cap \Omega' \neq \emptyset$. If $u\geq v$ on $\Omega \cap \partial\Omega'$, then the function
   $$
   z \mapsto w(z):=\left\{\begin{array}{lcl}
\max(u(z),v(z)) &\hbox{ if}\ z \in \Omega \cap \Omega'\\
 u(z)  &\hbox{if}\  z \in\Omega\setminus\Omega'\\
\end{array}\right.
$$
is $m$-subharmonic on $\Omega$.

 \end{proposition}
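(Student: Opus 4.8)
The plan is to prove property~(1) first, since it is the linear‑algebraic core from which (2)--(6) follow by a standard mollification argument. Fixing a point and identifying a real $(1,1)$-form $\alpha$ with constant coefficients with its Hermitian matrix of eigenvalues $\lambda=(\lambda_1,\dots,\lambda_n)$, the identity $\alpha^k\wedge\beta^{n-k}=c_{n,k}\,S_k(\alpha)\,\beta^n$ shows that $\alpha\in\Theta_m$ exactly when $\lambda$ lies in the G{\aa}rding cone $\Gamma_m:=\{\mu\in\R^n : S_1(\mu)\ge 0,\dots,S_m(\mu)\ge 0\}$ of the hyperbolic polynomial $S_m$. The facts I would invoke from G{\aa}rding's theory (see \cite{Bl05}) are that $\Gamma_m$ is a convex cone, that the polarized inequality holds (for $\alpha_1,\dots,\alpha_m$ in $\overline{\Theta_m}$ the mixed form $\alpha_1\wedge\cdots\wedge\alpha_m\wedge\beta^{n-m}$ is a nonnegative multiple of $\beta^n$), and the resulting self-duality statement: $\alpha\in\Theta_m$ as soon as $\alpha\wedge\omega_1\wedge\cdots\wedge\omega_{m-1}\wedge\beta^{n-m}\ge 0$ for all constant $\omega_j\in\Theta_m$. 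For $u\in\mathcal C^2(\Omega)$ this gives both directions of~(1): if $\sigma_k(u)\ge 0$ pointwise for $k\le m$, then $dd^cu(z)\in\Theta_m$ for every $z$, so $dd^cu\wedge\omega_1\wedge\cdots\wedge\omega_{m-1}\wedge\beta^{n-m}$ is pointwise a nonnegative multiple of $\beta^n$ and hence a positive current; conversely, a continuous $(n,n)$-form that is nonnegative as a current is pointwise nonnegative, so evaluating the defining inequality at each $z$ yields $dd^cu(z)\in\Theta_m$ by the self-duality characterization, i.e. $\sigma_k(u)(z)\ge 0$ for $k\le m$.

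Next I would record the engine behind (2)--(6): $\mathcal{SH}_m(\Omega)$ is a convex cone (by linearity of $dd^c$ and additivity of positive currents), is translation invariant, and is stable under standard mollification; concretely, $dd^c$ commutes with convolution and $m$-positivity of a form with smooth coefficients can be tested against constant $m$-positive forms, so $u*\rho_\e$ is smooth, $m$-subharmonic, and decreases to $u$ (see \cite{Bl05}, \cite{Lu12}). Granting this, (3) is immediate: taking $\omega_1=\dots=\omega_{m-1}=\beta$ in the definition gives $dd^cu\wedge\beta^{n-1}\ge 0$, so $\mathcal{SH}_m(\Omega)\subset\mathcal{SH}(\Omega)\subset L^1_{loc}(\Omega)$, the cone and convexity properties being built into the definition. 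For (2), from $\Theta_m\subset\Theta_{m-1}$ and (1) one gets $\mathcal{SH}_m\subset\mathcal{SH}_{m-1}$ first for $\mathcal C^2$ functions, then in general by applying it to $u*\rho_\e$ and letting $\e\downarrow 0$ via~(5); the strictness of each inclusion is witnessed by a diagonal quadratic form such as $u(z)=-a|z_1|^2+|z_2|^2+\cdots+|z_n|^2$, with $a>0$ tuned so that exactly $\sigma_1(u),\dots,\sigma_{m-1}(u)$ are nonnegative at the origin.

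For (4), when $u,f$ are $\mathcal C^2$ one computes $dd^c(f\circ u)=f'(u)\,dd^cu+f''(u)\,du\wedge d^cu$; since $f'\ge 0$, $f''\ge 0$, the rank-one form $du\wedge d^cu$ is positive semidefinite hence in $\Theta_m$, and $\Theta_m$ is a convex cone, the right-hand side lies in $\Theta_m$ pointwise, so $f\circ u$ is $m$-subharmonic by~(1); the general case follows by approximating $f$ from above by smooth convex increasing functions and $u$ by $u*\rho_\e$, then applying~(5) and monotone convergence, using that $f$ is increasing. For (5), if $u_j\downarrow u$ with $u\not\equiv-\infty$ on any component, then $u$ is subharmonic, $u_j\to u$ in $L^1_{loc}$, so for fixed smooth $m$-positive $\omega_1,\dots,\omega_{m-1}$ the currents $dd^cu_j\wedge\omega_1\wedge\cdots\wedge\omega_{m-1}\wedge\beta^{n-m}$ converge weakly to the corresponding current for $u$, and a weak limit of positive currents is positive. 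For (6), $m$-subharmonicity being local, I would check $w$ near three types of points: on $\Omega\setminus\overline{\Omega'}$ one has $w=u$; on $\Omega\cap\Omega'$ away from $\partial\Omega'$ one has $w=\max(u,v)$, which is $m$-subharmonic because a finite maximum of $m$-subharmonic functions is $m$-subharmonic (via the regularized maximum, see \cite{Bl05}); and near $\zeta\in\Omega\cap\partial\Omega'$ the hypothesis $u\ge v$ on $\Omega\cap\partial\Omega'$ together with the upper semicontinuity of $v$ forces $w$ to coincide with $u$ on a small ball about $\zeta$, which also shows that $w$ is upper semicontinuous across $\partial\Omega'$; so in all three cases $w$ locally agrees with an $m$-subharmonic function.

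The main obstacle is property~(1): the equivalence between the distributional definition with test $m$-positive forms and the pointwise sign conditions $\sigma_1(u)\ge 0,\dots,\sigma_m(u)\ge 0$ rests on the nontrivial convexity and self-duality of the G{\aa}rding cone of the hyperbolic polynomial $S_m$, the only point where something beyond elementary manipulation is required. A secondary ingredient that genuinely uses B\l ocki's framework rather than direct computation is the stability of $\mathcal{SH}_m(\Omega)$ under convolution and under finite maxima, since it is precisely this that transfers the smooth-case identities to arbitrary $m$-subharmonic functions; in an expository section one would state (1)--(6) citing \cite{Bl05} and \cite{Lu12} and detail only the short arguments for (2)--(6).
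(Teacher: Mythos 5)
The paper offers no proof of Proposition~1.2; it records the six properties as standard facts with references to \cite{Bl05} and \cite{Lu12}, so there is no in-paper argument to compare against. Your sketch follows the usual route from those references, and items (1)--(5) are sound: G{\aa}rding's convexity and self-duality theory gives the pointwise description in (1), the cone structure and the inclusions of the $\Theta_k$ give (2) and (3), the chain-rule identity $dd^c(f\circ u)=f'(u)\,dd^cu+f''(u)\,du\wedge d^cu$ together with convexity of $\Theta_m$ gives (4), and weak limits of positive currents give (5); the mollification step you rely on to pass from the smooth case to the general case is exactly the standard engine.

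Your argument for (6), however, has a genuine gap at the one delicate point. You claim that near a point $\zeta\in\Omega\cap\partial\Omega'$ the hypothesis $u\geq v$ on $\Omega\cap\partial\Omega'$ and the upper semicontinuity of $v$ force $w$ to coincide with $u$ on a small ball about $\zeta$. That would require $v<u$ throughout a neighbourhood of $\zeta$; upper semicontinuity of $v$ only yields $v<u(\zeta)+\varepsilon$ near $\zeta$, and to descend from $u(\zeta)$ to $u(z)$ one would need $u$ to be lower semicontinuous at $\zeta$, which an $m$-subharmonic function certainly need not be. Concretely, take $n=m=1$, $\Omega=\C$, $\Omega'=\{\mathrm{Re}\,z>0\}$, $u(z)=\log|z-\delta|$ and $v\equiv -\varepsilon$ on $\Omega'$, with $\delta=e^{-\varepsilon}$: then $u\geq v$ on $i\R=\Omega\cap\partial\Omega'$, yet every ball about the origin contains points $z\in\Omega'$ with $u(z)<-\varepsilon=v(z)$, so $w\neq u$ on any such ball. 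The statement of (6) is of course still true, but the correct way to finish at $\partial\Omega'$ is not local agreement with $u$: one checks upper semicontinuity of $w$ at $\zeta$ using $\limsup_{z\to\zeta}v(z)\leq u(\zeta)$ and $\limsup_{z\to\zeta}u(z)\leq u(\zeta)$, notes that $w\geq u$ on all of $\Omega$ so the sub-mean value inequality $w(\zeta)=u(\zeta)\leq\widehat{u}_r(\zeta)\leq\widehat{w}_r(\zeta)$ is automatic at every boundary point, and then upgrades subharmonicity of $w$ to $m$-subharmonicity by the approximation arguments in \cite{Bl05} or \cite{Lu12}. Note also that one cannot simply verify the distributional inequality defining $m$-subharmonicity on the open set $\Omega\setminus\partial\Omega'$ and pass to all of $\Omega$, since $\partial\Omega'$ need not be a negligible set for the relevant currents.
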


 \subsection{Approximation of Hessian potentials}
 Another ingredient which will be important is the regularization process.  
 
 Let $\Omega \Subset \C^n$ be a bounded domain. Set
$$
\delta_0 := \max\{d (z , \partial\Omega) \, ; \, z \in  {\bar \Omega} \}\cdot
$$   
  Then for any $ 0 < \delta < \delta_0$,  we have
  $$ 
  \Omega_\delta :\{z \in \Omega \, ; \, d (z , \partial\Omega) > \delta \} \neq \emptyset.
  $$  
   Fix a  non negative  radial Borel  function $\chi$ on $\C^n$ with compact support in the unit ball $\B \subset \C^n$ such that $\int_{\C^n} \chi (\zeta) d \lambda_{2 n} (\zeta) = 1$ and set for $ \delta > 0$, 
   $\chi_{\delta}(\zeta)=\frac{1}{\delta^{2n}}\chi (\frac{\zeta}{\delta})$.
 
 Let $u \in \mathcal{SH}_m (\Omega) \subset L^1_{loc} (\Omega)$ and define its standard $\delta$-regularization by the formula
      
  \begin{equation} \label{eq:reg}
  u_\delta (z) =  u \star {\chi}_\delta  (z) := \int_{\Omega} u (z-\zeta) \chi_{\delta} (\zeta) d \lambda_{2n} (\zeta), z \in \Omega_\delta.
  \end{equation}
  Then it is easy to see that $ {u}_{\delta}$ is $m$-subharmonic  on $\Omega_{\delta}$ and  decreases to $u$ in $\Omega$ as $\delta $ decreases to $0$.
  
  Observe that when $\chi= \chi^\B  := (1/\tau_{2n})  {\bf 1}_\B$ is the normalized characteristic function of the unit ball, then $u \star \chi_\delta = \widehat{u}_\delta$ is the mean-value function of $u$  defined on $\Omega_\delta$  by
  $$
   \widehat{u}_\delta (z) := (1\slash \tau_n) \int_\B u (z +\delta \zeta) d \lambda_{2n} (\zeta), \, \, z \in \Omega_\delta,
  $$
where $\tau_n := \lambda_{2n} (\B)$.
\begin{lemma}   \label{lem:Poisson-Jensen}
Let $u \in \mathcal{SH}_m (\Omega) \cap L^{1} (\Omega)$. Then for $0 < \delta < \delta_0$, its $\delta$-regularization extends to $\C^n$ by the formula
      
  \begin{equation} \label{eq:reg}
 u_\delta (z) =  u \star  {\chi}_\delta (z)  := \int_{\Omega} u (\zeta) \chi_{\delta} (z - \zeta) d \lambda_{2n} (\zeta), z \in \C^n,
  \end{equation}
and have the following properties :
 
 1)  the function $ { u}_{\delta}$  is $m$-subharmonic in $\Omega_\delta$, smooth in $\C^n$ provided that $\chi$ is smooth;

 2) $({u}_\delta)$ decreases to $u$ in $\Omega_{\delta_0}$ as $\delta \in ]0,\delta_0[ $ decreases to $0$, where $\delta_0 > $ is small enough;

 3) the mean-value function $\widehat{u}_\delta $ satisfies the estimate
  \begin{equation} \label{eq:PJ1}
  \int_{\Omega_\delta} \left(\widehat{u}_\delta (z) - u(z)\right) d\lambda_{2 n}(z) \leq a_n \delta^2 \int_{\Omega_\delta} dd^c u \wedge \beta^{n - 1},
  \end{equation}
  where $a_n > 0$ is a  constant which does not depend on $u$ nor on $\delta$.
  
 4)  If $\Omega$ is  $m$-hyperconvex (see Definition 1.4) with smooth boundary and $u \leq 0$ on $\Omega$, we have for any $0 < \delta< \delta_0$,
 \begin{equation} \label{eq:PJ2}
 \int_{\Omega_\delta} ( {u}_{\delta} (z) - u(z)) d\lambda_{2 n}(z) \leq b_n \delta \Vert u\Vert_1,
    \end{equation}
  where $\Vert u\Vert_1 := \int_{\Omega} \vert u\vert d \lambda_{2 n}$ and $b_n > 0$ is a uniform constant which does not depend on $u$ nor on $\delta$.
\end{lemma}
An estimate like  (\ref{eq:PJ2}) was first obtained in  \cite{BKPZ16} (see also \cite{KN20a} and \cite{Ze20})).
\begin{proof}
The first and the second property are clear.  The third one follows from Poisson-Jensen formula for subharmonic functions (see \cite{GKZ08}, \cite{Ze20}).

To prove the last property, observe that by definition there is a defining function $\rho$ which is $m$-subharmonic and smooth on $\bar \Omega$ and satisfies  $\vert \nabla \rho\vert >0$ on $\partial \Omega$. Then there exists a constant $c > 0$ depending only on $\Omega$ such that   $- \rho (z) \geq c \, \mathrm{dist} (z,\partial \Omega)$ for any $z \in \Omega$ (see  \cite{Ze20} for more details). Hence
$\Omega_\delta \subset \{\rho < - c \delta\}$ and then by formula (\ref{eq:testinequality}) we get
\begin{eqnarray*}
\int_{\Omega_\delta} dd^c u \wedge \beta^{n - 1} &\leq & \int_\Omega \frac{1}{ c \delta} (- \rho)   dd^c u \wedge \beta^{n - 1} \\
&\leq  &\frac{1}{ c \delta} \int_\Omega (- u)   dd^c \rho \wedge \beta^{n - 1}\leq \frac{b_n}{\delta}  \Vert u\Vert_1,
\end{eqnarray*}
where $b_n >0$ is a uniform constant.
\end{proof}

Let us introduce  the notions of $m$-pseudoconvexity.

\begin{definition} \label{eq:HyperConv} 1.  We say that the open set $\Omega \Subset \C^n$  is  $m$-hyperconvex  if it admits a defining function $\rho : \Omega \longrightarrow ]-\infty , 0[ = \R_{>0}$ which is a bounded continuous $m$-subharmonic on $\Omega$ (see \cite{Lu12,Lu15}. If moreover $\rho$ is smooth in $\bar \Omega$ and $\nabla \rho\vert > 0$ pointiwse in $\partial \Omega$, we will say that $\Omega \Subset \C^n$  is  $m$-hyperconvex with smooth boundary.

2. We say that the open set $\Omega \Subset \C^n$  is  strictly $m$-pseudoconvex if $\Omega$ admits a smooth defining function $\rho$ which is strictly $m$-subharmonic in a neighborhood of $\bar \Omega$ and satisfies  $\vert \nabla \rho \vert > 0$ pointwise  in $\partial \Omega = \{\rho = 0\}$. In this case we can choose $\rho $ so that
\begin{equation} \label{eq:stronpconvexity}
(dd^c \rho)^k \wedge \beta^{n - k} \geq \beta^n \, \, \mathrm{for} \, \, 1 \leq k \leq m,
\end{equation}
pointwise in $\Omega$.
\end{definition}

 \begin{example}
1.  Any euclidean ball in $\C^n$ is strictly  $m$-pseudoconvex and any polydisc in $\C^n$ ($n \geq 2$) is $m$-hyperconvex but not strictly  $m$-pseudoconvex.

2. The domain $\{ z \in \C^n ; \sum_{1 \leq j \leq n} \vert z_j\vert < 1 \}$ is a bounded  $m$-hyperconvex domain with Lipschitz but not smooth boundary, hence it is not strictly  $m$-pseudoconvex. 
\end{example}

The following lemma will be also needed. 
 For a function $g \in \mathcal{C}^0 (\partial \Omega)$, we denote by   $\mathcal{SH}^g_m (\Omega)$ the set of functions $w \in \mathcal{SH}_m (\Omega) \cap L^{\infty} (\Omega)$ such that $w =g$ on $\partial \Omega$ i.e.  for any $\zeta \in \Omega$, $\lim_{z \to \zeta} w (z) =  g (\zeta)$.
 
\begin{lemma} \label{lem:appximationwithbdv} Let $g \in \mathcal{C}^0 (\partial \Omega)$ and   $w \in \mathcal{SH}_m^g (\Omega)$. Then there exists a decreasing sequence $(w_j)$ of functions in $\mathcal{SH}_m^g  (\Omega) \cap C^0 (\bar{\Omega})$ which converges to $w$ pointwise on $\Omega$.
\end{lemma}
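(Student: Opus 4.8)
The plan is to construct each $w_j$ as a Perron envelope of $m$-subharmonic functions squeezed between $w$ and a Lipschitz majorant of $w$, and then to verify the required properties one by one, the continuity of the $w_j$ inside $\Omega$ being the only delicate point.

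First I would record two preliminary objects. Since the homogeneous Dirichlet problem on $\Omega$ with continuous boundary data $g$ is solvable with a continuous solution (Perron method; cf. \cite{Bl05}, \cite{Ch16a}), there is $h\in\mathcal{SH}_m(\Omega)\cap\mathcal{C}^0(\overline{\Omega})$ with $(dd^c h)^m\wedge\beta^{n-m}=0$, $h_{|\partial\Omega}=g$, and $h$ is the largest $m$-subharmonic function on $\Omega$ whose boundary upper limits are $\le g$; in particular $w\le h$ since $w\in\mathcal{SH}_m^g(\Omega)$. Next, extend $w$ to an upper semicontinuous function $w^{*}$ on $\overline{\Omega}$ by setting $w^{*}=g$ on $\partial\Omega$ (this is consistent and gives an upper semicontinuous function because $\lim_{z\to\zeta}w(z)=g(\zeta)$ and $g$ is continuous), and introduce its $j$-Lipschitz sup-convolutions
\[
f_j(z):=\sup_{y\in\overline{\Omega}}\bigl(w^{*}(y)-j\,|z-y|\bigr),\qquad z\in\overline{\Omega}.
\]
Each $f_j$ is continuous and bounded, one has $f_{j+1}\le f_j$, and, because $w^{*}$ is upper semicontinuous and bounded, $f_j\downarrow w^{*}$ pointwise on $\overline{\Omega}$; moreover $f_j\ge w^{*}\ge w$ on $\Omega$.

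Then I would define, for each $j$,
\[
w_j:=\Bigl(\sup\bigl\{v\in\mathcal{SH}_m(\Omega)\ :\ v\le f_j\ \text{on}\ \Omega,\ \ \textstyle\limsup_{z\to\zeta}v(z)\le g(\zeta)\ \ \forall\zeta\in\partial\Omega\bigr\}\Bigr)^{*},
\]
and verify the following routine facts: the defining family contains $w$ and is bounded above by $h$, so $w_j$ is a bounded $m$-subharmonic function on $\Omega$; $w\le w_j$ (as $w$ lies in the family), $w_j\le f_j$ (as $f_j$ is continuous), and $w_j\le h$ (by maximality of $h$); the sequence $(w_j)$ is decreasing because $f_{j+1}\le f_j$ shrinks the family; $w_j\to w$ pointwise on $\Omega$ since $w\le w_j\le f_j\downarrow w^{*}=w$ there; and for each $\zeta\in\partial\Omega$ one has $g(\zeta)=\liminf_{z\to\zeta}w(z)\le\liminf_{z\to\zeta}w_j(z)\le\limsup_{z\to\zeta}w_j(z)\le\limsup_{z\to\zeta}h(z)=g(\zeta)$, so that $w_j\in\mathcal{SH}_m^g(\Omega)$ and, together with the continuity of $g$, $w_j$ is continuous at every point of $\partial\Omega$.

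The hard part will be the continuity of $w_j$ in $\Omega$: since $w_j$ is automatically upper semicontinuous, what is missing is lower semicontinuity, and I would get it by the classical translation argument for envelopes. For a small vector $a$, the function $z\mapsto w_j(z+a)-C|a|$ is $m$-subharmonic (translation invariance), and since $f_j$ is $j$-Lipschitz it stays below $f_j$ on its domain as soon as $C\ge j$; because the squeeze $w\le w_j\le h$ forces $w_j\to g$ uniformly near $\partial\Omega$, this function can be modified harmlessly in a thin neighbourhood of $\partial\Omega$ so as to become a member of the defining family of $w_j$, with a constant $C$ depending on $j$, on the geometry of $\Omega$ near $\partial\Omega$, and on the moduli of continuity of $g$ and of $w_j$ near $\partial\Omega$, and with $C|a|\to0$ as $a\to0$. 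Comparing this competitor with $w_j$ and interchanging the roles of $z$ and $z+a$ then yields a modulus of continuity for $w_j$ on $\Omega$. Alternatively, one may invoke the known continuity up to the boundary of the solution of the obstacle problem for the complex Hessian equation with continuous obstacle and continuous boundary datum on such a domain, applied with obstacle $f_j$ and boundary datum $g$. Granting this last point, $(w_j)\subset\mathcal{SH}_m^g(\Omega)\cap\mathcal{C}^0(\overline{\Omega})$ is a decreasing sequence converging pointwise to $w$ on $\Omega$.
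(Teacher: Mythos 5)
Your construction is essentially the same as the paper's. The paper takes an arbitrary decreasing sequence of continuous functions $h_j\downarrow w$ on $\bar\Omega$, replaces $h_j$ by $\min\{h_j,G\}$ (with $G$ the harmonic extension of $g$) so the majorant equals $g$ on $\partial\Omega$, defines $w_j$ as the $m$-subharmonic envelope, and cites \cite{BZ20} for continuity; you take Lipschitz sup-convolutions $f_j$ for the majorants and impose an extra boundary constraint $\limsup_{z\to\zeta}v(z)\le g(\zeta)$, which (since that constraint is equivalent to $v\le h$) amounts to replacing $f_j$ by the continuous obstacle $\min\{f_j,h\}$ which equals $g$ on $\partial\Omega$ --- the same normalization the paper achieves with $G$. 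So the two arguments coincide in substance, and your final appeal to the continuity of the solution of the obstacle problem plays the role of the paper's citation to \cite{BZ20}.

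One soft spot worth flagging in the part where you try to go beyond the citation: in the translation argument you assert that $w\le w_j\le h$ forces $w_j\to g$ \emph{uniformly} near $\partial\Omega$, but the lower bound $w$ is only assumed to tend to $g$ pointwise at each boundary point, so uniformity does not follow from the squeeze as written (it would need a barrier argument, e.g.\ exploiting the strictly $m$-subharmonic defining function $\rho$). You also let the constant $C$ depend on ``the modulus of continuity of $w_j$ near $\partial\Omega$,'' which is circular as stated. Since you explicitly fall back on the known obstacle-problem continuity, this does not break the proof, but the translation sketch as given is not yet a self-contained alternative.
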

\begin{proof} First take any decreasing sequence  of continuous functions $(h_j)$ on $\bar{\Omega}$ which converges to $w$ on $\bar{\Omega}$. We can arrange so that $h_j = g$ on $\partial \Omega$. Indeed take the harmonic extension $G$ of $g$ to $\Omega$ and then the sequence $\min \{h_j,G\}$ satisfies the requirement. 

Now  set
$$
w_j := \sup \{v \in \mathcal{SH}_m (\Omega) ; v \leq h_j\}.
$$ 
 By \cite{BZ20}, we know that the sequence $(w_j)$ satisfies all the requirements of the lemma.
\end{proof}

\subsection{Remarks on the modulus of continuity}

 Let  $\phi : {\Omega} \longrightarrow \R$ be a continuous function.  We fix $\delta_0 > 0$ as before so that $\Omega_{\delta_0} \neq \emptyset$ and recall the following definition for $0 < \delta < \delta_0$ and $z \in \Omega_\delta$,
\begin{equation} \label{eq:supnorm}
\widehat{\phi}_\delta  (z) := \int_{\bar{\B}} \phi (z + \delta \zeta) d \lambda_\B (\zeta),  \, \, \, 
\end{equation}
where $\lambda_\B$ is the normalized Lebesgue measure on $\B$.

 We introduce the  modulus of (uniform)  continuity of $\phi$ on ${\Omega}$ defined for $\varepsilon> 0$ by the formula
  \begin{equation} \label{eq:fullkappa}
  \kappa_\phi (\varepsilon) := \sup \{\vert \phi(z) - \phi (z')\vert \, ; \, z, z' \in {\Omega}, \vert z-z'\vert \leq \varepsilon\}.
  \end{equation}
  Then $\phi$ extends to a uniformly continuous function on $\bar{\Omega}$ if and only if $\lim_{\varepsilon \to 0^+} \kappa_\varphi (\varepsilon) = 0$.
  
We introduce another  modulus of continuity defined  for $0 < \delta < \delta_0$ by the formula
\begin{equation} \label{eq:hatkappa}
\widehat{\kappa}_\phi (\delta) := \sup_{\Omega_\delta} \left(\widehat{\phi}_\delta (z) - \phi (z)\right).
\end{equation}

We see immediately that $\widehat{\kappa}_\phi (\delta) \leq  \kappa_\phi (\delta)$ for any $0< \delta < \delta_0$. 

These  moduli quantify the continuity of $\phi$ on $\Omega$. While the (full) modulus of  continuity $ \kappa_\phi$ characterizes uniform continuity of $\phi$ on $\bar{\Omega}$, the (relative) modulus of continuity $\widehat{\kappa}_\phi$ only characterizes the continuity of $\phi$ on $\Omega$. Indeed  the condition $\lim_{\delta \to 0^+} \widehat{\kappa}_\phi (\delta) = 0$  implies that the function $\phi$ is continuous on $\Omega$, but it does not imply the extension of the function $\phi$ by continuity to $\bar{\Omega}$ as the example of a harmonic function on $\Omega$ shows. 

We will state a result from \cite{Ze20}  which clarifies the relations between these notions of continuity in some cases.

We need some definitions. 
\begin{definition} 1.  A continuous  function $\kappa : [0, l] : \longrightarrow \R^+$ is a modulus of continuity if it is  increasing,  subadditive and satisfies $\kappa (0) =0$. It's always possible to extend such a function to the whole $\R^+$ with the same properties.

2. A function  $\phi : \Omega \longrightarrow \R$ is said to be $\kappa$-continuous near the boundary $\partial \Omega$ if there exists $0 < \delta_1 < \delta_0$ small enough and a constant $M_1 > 0$ such that for any $\zeta \in \partial \Omega$ and any $z \in \Omega$ with $\vert z - \zeta\vert \leq \delta \leq \delta_1$, we have $\vert u (z) - u(\zeta) \vert \leq M_1  \kappa (\delta)$.
\end{definition}
Uniform continuity on $\bar{\Omega}$ implies uniform continuity near the boundary $\partial \Omega$. However as observed above,  the condition $\lim_{\delta \to 0} \widehat{\kappa}_\phi (\delta) = 0$ implies the continuity of $\phi$ on $\Omega$ but it does not imply continuity near the boundary $\partial \Omega$.

We first introduce the following condition on $\kappa$.
\begin{equation} \label{eq:MCcondition}
\exists A > 1, \, \, \limsup_{t \to 0^+} \frac{\kappa (A t)}{A \kappa (t)} <  \frac{1}{2n}.
\end{equation}
Observe that  the condition (\ref{eq:MCcondition}) is satisfied by any logarithmic H\"older modulus of continuity $\kappa_{\alpha,\nu}  (t) := t^\alpha (-\log t)^{\nu}$ for $0 < t < < 1$ where $0\leq  \alpha < 1$ and $\nu \in \R$, with $\nu < 0$ when $\alpha =0$.

The following lemma  is proved in  \cite{Ze20}. 
\begin{lemma} \label{lem:sup-mean}  Let $\kappa$ be a modulus of continuity satisfying (\ref{eq:MCcondition}).  Let $\Omega \Subset \C^n$ be a bounded domain and $u \in  \mathcal{SH} (\Omega) \cap L^{\infty} ({\bar \Omega})$. Assume that $u$ is $\kappa$-continuous  near $\partial\Omega$. Then the following properties are equivalent:

$(i)$ $\exists c_1 >0$, $\exists \,  \delta_1$ with $ 0 < \delta_1 < \delta_0$ such that for any $0< \delta < \delta_1$
$$
\widehat{u}_\delta  (z)   \leq u  (z) +  c_1 \kappa (\delta), \, \, \, \text{for  any } \, \, \, z \in \Omega_\delta,
$$ 

$(ii)$ $\exists c_2 >0$, $\exists  \, \delta_2$ with $ 0 < \delta_2 < \delta_0$ such that for any $0< \delta < \delta_2$,
$$
\sup_{\bar B(z,\delta)} u \leq  u (z)+  c_2 \kappa (\delta),  \, \, \, \text{for  any } \, \, \, z \in \Omega_\delta.
$$

\smallskip

Moreover if one of these conditions is satisfied then $u$ is $\kappa$-continuous on $\bar{\Omega}$ i.e. $\kappa_u \leq C \, \kappa$, where $C > 0$ is a uniform constant.
\end{lemma}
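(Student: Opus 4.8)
The plan is to prove the two implications $(ii)\Rightarrow(i)$ and $(i)\Rightarrow(ii)$ separately — the first being immediate, the second the substance — and then to read off the $\kappa$-continuity statement from $(ii)$. The implication $(ii)\Rightarrow(i)$ is trivial: since $\widehat u_\delta(z)=\int_{\bar\B}u(z+\delta\xi)\,d\lambda_\B(\xi)\le\sup_{\bar B(z,\delta)}u$, inequality $(ii)$ gives $(i)$ with $c_1=c_2$ and $\delta_1=\delta_2$. Moreover, once $(ii)$ holds, applying it at $z$ and at $z'$ (and using that $u$ is upper semicontinuous, so the suprema are attained) yields $|u(z)-u(z')|\le c_2\,\kappa(|z-z'|)$ for all $z,z'\in\Omega_{\delta_2}$ with $|z-z'|<\delta_2$; combining this with the hypothesis that $u$ is $\kappa$-continuous near $\partial\Omega$ and a standard chaining argument across a collar of $\partial\Omega$ gives $\kappa_u\le C\kappa$ on $\bar\Omega$. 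So everything reduces to $(i)\Rightarrow(ii)$.

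For $(i)\Rightarrow(ii)$, after subtracting a constant we may assume $u\ge 0$ (this affects none of the statements). The first step converts the mean-value domination $(i)$ into a bound on the Riesz mass of $u$. Writing $\nu:=\Delta u\ge 0$ for the Riesz measure of $u$ (a positive multiple of $dd^c u\wedge\beta^{n-1}$), the Riesz representation formula for the spherical means $m_t(x)$ of $u$, integrated in $t$, gives for $n\ge 2$ and small $\rho$
\[
\widehat u_\rho(x)-u(x)\;\ge\; c_n\int_{B(x,\rho/4)}|x-y|^{2-2n}\,d\nu(y),
\]
so $(i)$ together with subadditivity of $\kappa$ yields a constant $C_1$ with
\[
\int_{B(x,r)}|x-y|^{2-2n}\,d\nu(y)\;\le\; C_1\,\kappa(r)\qquad(\ast)
\]
for all small $r>0$ and all $x$ in a fixed $\Omega'\Subset\Omega$ (the case $n=1$ is identical with the logarithmic kernel). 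It is essential to retain the \emph{weighted} mass $(\ast)$ rather than the cruder $\nu(B(x,r))\lesssim r^{2n-2}\kappa(2r)$, because for weak moduli allowed by \eqref{eq:MCcondition} — e.g. $\kappa(t)=|\log t|^{-1}$ — the latter leads to divergent integrals in the next step.

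The second step is a bootstrap on the scale. Using \eqref{eq:MCcondition}, fix a large $A>1$ and a small $\delta_1>0$ with $\kappa(At)\le\theta A\,\kappa(t)$ for $0<t<\delta_1$ and some $\theta<1/(2n)$. For $x\in\Omega'$ and $B(x,A\rho)\Subset\Omega$, decompose $u=h+p$ on $B(x,A\rho)$, where $h$ is the Poisson integral of $u|_{\partial B(x,A\rho)}$ and $p\le 0$ the Green potential of $\nu$. Then $h(x)=m_{A\rho}(x)\le u(x)+c_nC_1\,\kappa(A\rho)$ by $(\ast)$, and $\sup_{B(x,\rho)}u\le\sup_{B(x,\rho)}h$ since $p\le 0$. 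Writing $h(y)-h(x)=\int_{\partial B(x,A\rho)}\bigl(P(x,\xi)-P(y,\xi)\bigr)\bigl(\sup_{\partial B}u-u(\xi)\bigr)\,dS(\xi)$ and estimating the Poisson-kernel increment for $y\in\bar B(x,\rho)$ by the sharp bound $|P(x,\xi)-P(y,\xi)|\le\frac{2n}{A}(1+o_A(1))\,P(x,\xi)$ — here the dimensional constant $2n$ (sharp for $y$ near the centre) is exactly the reason for the factor $1/(2n)$ in \eqref{eq:MCcondition} — and noting $\sup_{\partial B(x,A\rho)}u-h(x)\le\sup_{\partial B(x,A\rho)}u-u(x)$, one gets, with $S(t):=\sup_{x\in\Omega'}\bigl(\sup_{\bar B(x,t)}u-u(x)\bigr)$,
\[
S(\rho)\;\le\; c_nC_1\,\kappa(A\rho)+\tfrac{2n}{A}\bigl(1+o_A(1)\bigr)\,S(A\rho).
\]
Iterating this from scale $\rho$ up to the macroscopic scale $\simeq\delta_1$, using $\kappa(A^{j}\rho)\le(\theta A)^{j}\kappa(\rho)$, the trivial bound $S(\cdot)\le 2\|u\|_{L^\infty}$ at the top scale, and the crucial fact that the product of the contraction factor $\tfrac{2n}{A}(1+o_A(1))$ with $\theta A$ is $2n\theta(1+o_A(1))<1$ for $A$ large (this is where $\theta<1/(2n)$ is used), the resulting geometric series converges and gives $S(\rho)\le C\kappa(\rho)$ for $0<\rho<\delta_2$. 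For points $z$ close to $\partial\Omega$, where macroscopic balls do not fit, the $\kappa$-continuity near $\partial\Omega$ supplies the base case of the same bootstrap. Letting $\Omega'$ exhaust $\Omega$ with uniform constants then yields $(ii)$ on all of $\Omega_\delta$, and with it $\kappa_u\le C\kappa$ on $\bar\Omega$ by the first paragraph.

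The main obstacle is making the recursion in the third paragraph genuinely contract: one must use the \emph{sharp} dimensional constant $2n$ in the Poisson-gradient estimate (with no spurious additive loss near the centre of the ball) and must bound $\sup_{\partial B(x,A\rho)}u-u(x)$ by $S(A\rho)$ rather than by $S(2A\rho)$ — a careless version of either step loses a factor that makes the induction fail — and one must organise the nested subdomains and the near-boundary base case so that the final constant $C$ is uniform. The reduction in the second paragraph to the weighted Riesz mass $(\ast)$ is precisely what keeps all the relevant integrals finite over the full class of moduli permitted by \eqref{eq:MCcondition}.
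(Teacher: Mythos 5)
The paper does not contain a proof of this lemma: it is quoted from the second author's preprint \cite{Ze20}, so there is nothing in the present paper to compare your argument against directly. Judged on its own, your proof is essentially correct, and the appearance of the threshold $1/(2n)$ in condition (\ref{eq:MCcondition}) is strong evidence that this is the intended mechanism: $2n$ is exactly the sharp dimensional constant in the Poisson-kernel increment bound $|P(y,\xi)-P(x,\xi)|\le \tfrac{2n}{A}(1+o_A(1))\,P(x,\xi)$ for $y\in\bar B(x,R/A)$, $\xi\in\partial B(x,R)$, which drives your bootstrap. Your three steps all check out: the conversion of the volume-mean gap in $(i)$ to the weighted Riesz-mass bound $\int_{B(x,r)}|x-y|^{2-2n}\,d\nu\le C_1\kappa(r)$; the Riesz decomposition $u=h+p$ with $p\le 0$ that reduces the sup estimate on $u$ to one on the harmonic part $h$, with $h(x)-u(x)$ controlled by the same weighted mass; and the recursion $S(\rho)\le c\,\kappa(A\rho)+\tfrac{2n}{A}(1+\varepsilon)S(A\rho)$, whose iteration converges because $\kappa(At)\le\theta A\kappa(t)$ with $2n\theta(1+\varepsilon)<1$. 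You are also right that the argument only closes if one keeps the factor $2n/A$ multiplicatively (no additive loss) and bounds $\sup_{\partial B(x,A\rho)}u-h(x)$ by $S(A\rho)$, not by anything cruder.

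The one place where your sketch is thin is the near-boundary base case and the accompanying uniformity as $\Omega'$ exhausts $\Omega$. For $z$ with $\mathrm{dist}(z,\partial\Omega)=d$ small, the recursion stops at scale $\simeq d/A$, and the stopping term $\bigl[\tfrac{2n}{A}(1+\varepsilon)\bigr]^k S(A^k\rho)$ must still be $O(\kappa(\rho))$. This is precisely where the hypothesis of $\kappa$-continuity near $\partial\Omega$ enters: it gives $S(A^k\rho)\lesssim\kappa(d)$ by chaining through the nearest boundary point, and then $\bigl[\tfrac{2n}{A}(1+\varepsilon)\bigr]^k\kappa(d)\lesssim[2n\theta(1+\varepsilon)]^k\kappa(\rho)\lesssim\kappa(\rho)$, again via the same smallness condition. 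Spelling this out would make visible that the constant $C$ in $\kappa_u\le C\kappa$ depends only on $c_1$, on the constant $M_1$ of the near-boundary hypothesis, and on the pair $(A,\theta)$ --- and would also make clear that the near-boundary hypothesis cannot be dropped, as the paper's own remark about bounded harmonic functions on $\Omega$ (for which $\widehat{\kappa}_\phi\equiv 0$ yet $\phi$ need not extend continuously to $\bar\Omega$) already shows.
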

 
\subsection{Complex Hessian operators}

Following \cite{Bl05}, we can define the Hessian operators acting on (locally) bounded $m$-subharmonic functions as follows. 
 Given $u_1, \cdots, u_k \in \mathcal{SH}_m (\Omega) \cap L^{\infty} (\Omega)$ ($1 \leq k \leq m$), one can define inductively the following  positive $(m-k,m-k)$-current on $\Omega$
$$
dd^c u_1 \wedge \cdots \wedge dd^c u_k \wedge \beta^{n - m} := dd^c (u_1 dd^c u_2 \wedge \cdots \wedge dd^c u_k \wedge \beta^{n - m}).
$$

In particular, if $u  \in \mathcal{SH}_m (\Omega) \cap L^{\infty}_{loc} (\Omega)$, the positive current $(dd^c u)^m \wedge \beta^{n-m}$ is of top degree and can be written as :
$$
 (dd^c u)^m \wedge \beta^{n-m} = \frac{m! (n-m)!}{n!} \sigma_m (u) \beta^n,
$$
where $ \sigma_m (u)$ is a  positive Borel measure on $\Omega$, called $m$-Hessian measure of $u$.
 
Observe that when $m= 1$,  $\sigma_1 (u) = dd^c u \wedge \beta^{n-1} \slash \beta^n $ is the Riesz measure of $u$ (up to a positive constant), while  $\sigma_n (u) = (dd^c u)^n \slash \beta^n$ is the complex   Monge-Amp\`ere measure of $u$ on $\Omega$. 

It is then possible to extend Bedford-Taylor theory to this context. 
In particular, Chern-Levine Nirenberg inequalities hold and the Hessian operators are continuous under local uniform convergence and  monotone convergence 
pointwise a.e. on $\Omega$ of sequences of functions in  $\mathcal{SH}_m (\Omega) \cap L^{\infty}_{loc} (\Omega)$ (see \cite{Bl05}, \cite{Lu12}).

We define  $\mathcal{E}_m^0 (\Omega) $ to be the positive convex cone of  negative  functions  $\phi \in \mathcal{SH}_m (\Omega) \cap L^{\infty} (\Omega)$ such that
$$
\int_{\Omega} (dd^c \phi)^m \wedge \beta^{n - m} < + \infty, \, \, \phi_{|\partial \Omega} \equiv 0.
$$
These are the "test functions" in  $m$-Hessian Potential Theory in the sense that Stokes theorem is valid for these functions (see \cite{Lu12}).

More precisely it follows from \cite{Lu12,Lu15} that the following property holds:  if $\phi \in  \mathcal{E}_m^0 (\Omega) $ and $u , v \in \mathcal{SH}_m (\Omega) \cap L^{\infty} (\Omega)$ with $u \leq 0$, then for $0 \leq k \leq m - 1$, 

\begin{equation} \label{eq:testinequality}
\int_\Omega (-\phi)  dd^c u \wedge (dd^c v)^k \wedge \beta^{n - k-1} \leq \int_\Omega (-u)  dd^c \phi \wedge (dd^c v)^k \wedge \beta^{n - k-1}. 
\end{equation}

An important tool in the corresponding Potential Theory is the Comparison Principle.

\begin{proposition} \label{prop:Comparison Principle}
 Assume that $u,v\in \mathcal{SH}_m(\Omega)\cap L^{\infty}(\Omega)$ and for any $\zeta \in \partial \Omega$, $\liminf_{z \rightarrow \zeta }(u(z)- v(z))\geq 0$.  Then 
 $$
 \int_{\{u<v\}}(dd^c v)^m\wedge\beta^{n-m} \leq \int_{\{u<v\}}(dd^c u)^m\wedge\beta^{n-m}.
 $$
 Consequently, if $(dd^cu)^m\wedge\beta^{n-m}\leq(dd^cv)^m\wedge\beta^{n-m}$ weakly on $\Omega$, then $u \geq v$ in $\Omega$.
 
\end{proposition}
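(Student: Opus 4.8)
The statement splits in two; the second assertion (strict comparison) will follow from the first (the integral inequality) by a perturbation trick, so I would concentrate on the integral inequality and run the classical Bedford--Taylor argument, which is legitimate here since, as recalled above, Bedford--Taylor theory — Chern--Levine--Nirenberg estimates, continuity of $(dd^c\cdot)^m\wedge\beta^{n-m}$ under monotone limits, Stokes' theorem — carries over to bounded $m$-subharmonic functions (see \cite{Bl05,Lu12}). I will use two standard facts from that theory. First, the \emph{equal-mass lemma}: if $\Omega'\Subset\Omega$ and $f,g\in\mathcal{SH}_m(\Omega')\cap L^{\infty}(\Omega')$ coincide on a neighbourhood of $\partial\Omega'$, then $\int_{\Omega'}(dd^c f)^m\wedge\beta^{n-m}=\int_{\Omega'}(dd^c g)^m\wedge\beta^{n-m}$ (both finite, by Chern--Levine--Nirenberg). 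Second, the \emph{$\max$-inequality}: for $f,g\in\mathcal{SH}_m(\Omega)\cap L^{\infty}(\Omega)$ one has, as Borel measures on $\Omega$,
$$(dd^c\max(f,g))^m\wedge\beta^{n-m}\ \geq\ \mathbf{1}_{\{f\geq g\}}(dd^c f)^m\wedge\beta^{n-m}+\mathbf{1}_{\{f<g\}}(dd^c g)^m\wedge\beta^{n-m}.$$

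\textbf{The integral inequality.} I would fix $\delta>0$, set $G:=\{u<v-\delta\}$, and first observe that $\overline G\Subset\Omega$: the hypothesis $\liminf_{z\to\zeta}(u-v)\geq0$ together with the compactness of $\partial\Omega$ forces $u>v-\delta$ on a neighbourhood of $\partial\Omega$. Choose $\Omega'$ with $\overline G\subset\Omega'\Subset\Omega$ and put $h:=\max(u,v-\delta)$. Since $u\geq v-\delta$ off $G$, we have $h=u$ on $\Omega'\setminus\overline G$, in particular near $\partial\Omega'$, so the equal-mass lemma gives $\int_{\Omega'}(dd^c h)^m\wedge\beta^{n-m}=\int_{\Omega'}(dd^c u)^m\wedge\beta^{n-m}$. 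Applying the $\max$-inequality with $f=u$, $g=v-\delta$ (the latter having the same Hessian measure as $v$), integrating over $\Omega'$, and subtracting the finite quantity $\int_{\Omega'\setminus G}(dd^c u)^m\wedge\beta^{n-m}$, I obtain $\int_{G}(dd^c v)^m\wedge\beta^{n-m}\leq\int_{G}(dd^c u)^m\wedge\beta^{n-m}$. Finally I would let $\delta\downarrow0$: since $\{u<v-\delta\}\uparrow\{u<v\}$, continuity from below of the two measures yields $\int_{\{u<v\}}(dd^c v)^m\wedge\beta^{n-m}\leq\int_{\{u<v\}}(dd^c u)^m\wedge\beta^{n-m}$ (trivial if the right side is infinite).

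\textbf{Strict comparison.} Assuming now $(dd^c u)^m\wedge\beta^{n-m}\leq(dd^c v)^m\wedge\beta^{n-m}$ on $\Omega$, I would pick $R$ with $\Omega\subset B(0,R)$ and set $\psi(z):=|z|^2-R^2$: a smooth strictly $m$-subharmonic function with $\psi<0$ on $\overline\Omega$ and $(dd^c\psi)^m\wedge\beta^{n-m}=\beta^n$. For $\varepsilon>0$ let $v_\varepsilon:=v+\varepsilon\psi\in\mathcal{SH}_m(\Omega)\cap L^{\infty}(\Omega)$. Because $\psi<0$ on $\overline\Omega$, the pair $(u,v_\varepsilon)$ still satisfies $\liminf_{z\to\zeta}(u-v_\varepsilon)\geq-\varepsilon\psi(\zeta)>0$ on $\partial\Omega$; and expanding the mixed operator and discarding the nonnegative cross terms, $(dd^c v_\varepsilon)^m\wedge\beta^{n-m}\geq(dd^c v)^m\wedge\beta^{n-m}+\varepsilon^m\beta^n\geq(dd^c u)^m\wedge\beta^{n-m}+\varepsilon^m\beta^n$. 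Feeding this into the integral inequality already proved for the pair $(u,v_\varepsilon)$ forces $\varepsilon^m\int_{\{u<v_\varepsilon\}}\beta^n\leq0$, hence $\{u<v_\varepsilon\}$ has Lebesgue measure zero; since $u$ and $v_\varepsilon$ are subharmonic, this upgrades to $u\geq v_\varepsilon$ everywhere, and letting $\varepsilon\to0$ gives $u\geq v$ on $\Omega$.

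\textbf{Main obstacle.} All steps are routine bookkeeping with the two cited Bedford--Taylor facts except the last one: converting the measure inequality on $\{u<v\}$ into the pointwise conclusion. The idea there is to perturb $v$ by a small multiple of a strictly $m$-subharmonic function that is negative up to the boundary — this simultaneously enlarges the Hessian measure by a term $\varepsilon^m\beta^n$ and leaves the boundary inequality intact — so that the comparison inequality becomes self-contradictory unless the sublevel set is negligible. A secondary point requiring care is that the total Hessian masses need not be finite, which is why one localizes to $\Omega'\Subset\Omega$ and invokes Chern--Levine--Nirenberg there before any subtraction, and works with $\{u<v-\delta\}$ rather than $\{u<v\}$ directly.
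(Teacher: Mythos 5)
The paper states this proposition without proof, simply citing it as a standard tool of Hessian potential theory going back to Bedford--Taylor and developed for $m$-subharmonic functions by B\l ocki and Lu. Your argument is the classical Bedford--Taylor proof, correctly carried out.

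A few points worth confirming explicitly. In the first part the max-inequality plus the equal-mass lemma give, after integrating over $\Omega'$,
$$
\int_{\Omega'}(dd^c u)^m\wedge\beta^{n-m}\geq\int_{\Omega'\setminus G}(dd^c u)^m\wedge\beta^{n-m}+\int_{G}(dd^c v)^m\wedge\beta^{n-m},
$$
and the subtraction is licit precisely because Chern--Levine--Nirenberg bounds all the quantities on $\Omega'\Subset\Omega$ by $\Vert u\Vert_\infty$, $\Vert v\Vert_\infty$; you note this, and it is indeed the only real finiteness issue. The limit $\delta\downarrow0$ then just invokes monotone convergence for the two \emph{fixed} Borel measures along the increasing exhaustion $\{u<v-\delta\}\uparrow\{u<v\}$, so the inequality passes to the limit. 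In the second part, the perturbation $v_\varepsilon=v+\varepsilon(\vert z\vert^2-R^2)$ works because $(dd^c v_\varepsilon)^m\wedge\beta^{n-m}\geq(dd^c v)^m\wedge\beta^{n-m}+\varepsilon^m\beta^n$ after discarding the (positive) mixed terms in the binomial expansion, and the boundary inequality is even improved since $\vert z\vert^2-R^2<0$ on $\bar\Omega$. The resulting contradiction $\varepsilon^m\int_{\{u<v_\varepsilon\}}\beta^n\leq0$ forces $\lambda_{2n}(\{u<v_\varepsilon\})=0$, and because both $u$ and $v_\varepsilon$ are subharmonic (hence determined by their local volume averages), ``$u\geq v_\varepsilon$ a.e.'' upgrades to ``$u\geq v_\varepsilon$ everywhere''; letting $\varepsilon\to0$ finishes. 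All of this is sound, and it is exactly the argument one finds in the references the paper cites (\cite{Bl05}, \cite{Lu12}).
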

It follows from the comparison principle that if the Dirichlet problem (\ref{eq:DirPb}) admits a solution, then it is unique.

Let us recall the following estimates due to Cegrell (\cite{Ceg04}) for complex Monge-Amp\`ere operators and extended by Charabati to  complex Hessian operators (\cite{Ch16a}).

\begin{lemma}   \label{lem:Cegrell} 
Let  $u, v, w \in\mathcal{E}_m^0(\Omega)$. Then for any $1 \leq k \leq m - 1$
   
 $$
    \begin{array}{lcl}
 \int_{\Omega}dd^cu\wedge(dd^cv)^k\wedge(dd^cw)^{m-k-1}\wedge\beta^{n-m}
     \leq  I_m (u)^{\frac{1}{m}} \, I_m (v)^{\frac{k}{m}} \,  I_m (w)^{\frac{m-k-1}{m}},
  \end{array}
 $$
 where $I_m (u) :=  \int_{\Omega}(dd^c u)^m \wedge \beta^{n-m}$.
 
 In particular, if $\Omega$ is strictly $m$-hyperconvex, then
 $$
 \int_{\Omega}dd^c u \wedge (dd^c w)^k \wedge\beta^{n-k -1}  \leq  c_{m,n} \left(I_m (u)\right)^{\frac{1}{m}} \left(I_m (w)\right)^{\frac{k}{m}},
 $$
 and 
$$ 
  \int_{\Omega}dd^c u  \wedge \beta^{n-1}  \leq  c_{m,n} \left(I_m (u)\right)^{\frac{1}{m}}
 $$
 where $c_{m,n} > 0$ is a uniform constant.
\end{lemma}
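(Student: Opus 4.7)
The approach is to follow Cegrell's iterated Cauchy--Schwarz method (\cite{Ceg04}), extended to the Hessian setting by Charabati (\cite{Ch16a}). To begin, I would approximate $u, v, w \in \mathcal{E}_m^0(\Omega)$ by decreasing sequences of smooth, bounded $m$-subharmonic functions with zero boundary values (via Lemma \ref{lem:appximationwithbdv}), so that the integration-by-parts identities underlying the estimate apply without boundary terms, and recover the general case at the end by continuity of the Hessian operator under monotone limits.

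The core ingredient is a Cauchy--Schwarz inequality. For $\phi, \psi \in \mathcal{E}_m^0(\Omega)$ and a positive closed $(n-2,n-2)$-current of the form $T = dd^c v_1 \wedge \cdots \wedge dd^c v_{m-2} \wedge \beta^{n-m}$ with $v_j \in \mathcal{SH}_m(\Omega) \cap L^\infty(\Omega)$, the bilinear pairing $B_T(\phi,\psi) := \int_\Omega dd^c\phi \wedge dd^c\psi \wedge T$ is symmetric (by commutativity of the wedge of $(1,1)$-forms) and positive semi-definite (since $B_T(\phi,\phi) = \int_\Omega (dd^c\phi)^2 \wedge T$ is the total mass of a positive mixed Hessian measure), so the standard Cauchy--Schwarz inequality gives $B_T(\phi,\psi) \leq B_T(\phi,\phi)^{1/2} B_T(\psi,\psi)^{1/2}$. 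Applying this to
$$J := \int_\Omega dd^c u \wedge (dd^c v)^k \wedge (dd^c w)^{m-k-1} \wedge \beta^{n-m}$$
with $\phi = u$, $\psi = v$, $T = (dd^c v)^{k-1} \wedge (dd^c w)^{m-k-1} \wedge \beta^{n-m}$, and iterating on the two resulting factors (at each stage converting a mixed pair $dd^c \alpha \wedge dd^c \beta$ into a geometric mean of pure-square wedges), one progressively transfers factors among $u, v, w$ until the pure $m$-Hessian masses $I_m(u), I_m(v), I_m(w)$ are extracted. The exponents balance as claimed since they must sum to $\frac{1}{m} + \frac{k}{m} + \frac{m-k-1}{m} = 1$, reflecting the number of occurrences of each function.

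For the remaining two inequalities under the strong $m$-pseudoconvexity assumption, I would invoke a smooth defining function $\rho \in \mathcal{E}_m^0(\Omega)$ satisfying (\ref{eq:stronpconvexity}), whose $m$-Hessian mass $I_m(\rho)$ depends only on $\Omega$. The plan is to dominate $\beta^{n-k-1}$ by $(dd^c\rho)^{m-k-1} \wedge \beta^{n-m}$, up to a multiplicative constant depending only on $m$ and $n$, using the eigenvalue bounds $\sigma_j(dd^c\rho) \geq \binom{n}{j}$ for $1 \leq j \leq m$ together with a G\aa rding--Maclaurin type argument for the $m$-positive cone; plugging this domination into the first inequality with $v = \rho$ and absorbing $I_m(\rho)^{(m-k-1)/m}$ into the universal constant yields the second inequality, and the third is the case $k = 0$. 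The principal obstacle is the careful bookkeeping of exponents in the iterated Cauchy--Schwarz, and a secondary subtle point is the comparison of $\beta^{m-k-1}$ with $(dd^c \rho)^{m-k-1}$: rather than a pointwise comparison of positive forms (which need not hold when $m < n$), this is best formulated as an inequality valid after integration against the positive mixed Hessian current $dd^c u \wedge (dd^c w)^k$, whose justification invokes the hyperbolic structure of the $m$-positive cone.
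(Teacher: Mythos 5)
Your proposal follows the expected Cegrell route (approximation and iterated Cauchy--Schwarz), which is indeed the strategy behind the cited references; the paper itself states this lemma without proof, referring to \cite{Ceg04} and \cite{Ch16a}. However, the justification of the basic Cauchy--Schwarz step has a genuine gap. You assert that $B_T(\phi,\psi):=\int_\Omega dd^c\phi\wedge dd^c\psi\wedge T$ is a positive semi-definite symmetric bilinear form ``since $B_T(\phi,\phi)=\int_\Omega(dd^c\phi)^2\wedge T$ is the total mass of a positive mixed Hessian measure,'' and then invoke the standard Cauchy--Schwarz inequality. But that positivity is only established for $\phi$ in the \emph{cone} $\mathcal{SH}_m(\Omega)\cap L^\infty(\Omega)$, not on a vector space containing it, and cone-positivity is not enough to run the discriminant argument: the test function $\phi-t\psi$ for $t>0$ leaves the cone, and cone-positive symmetric forms need not satisfy Cauchy--Schwarz (for instance $B\big((a,b),(c,d)\big)=ad+bc$ on $\mathbb R_{\geq 0}^2$). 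In fact $B_T$ is \emph{not} positive semi-definite on real functions: taking $\phi=|z_1|^2-|z_2|^2$ and $T=\beta^{n-2}$ gives $(dd^c\phi)^2\wedge\beta^{n-2}=-2\,dd^c|z_1|^2\wedge dd^c|z_2|^2\wedge\beta^{n-2}<0$ (equivalently $\sigma_2(\phi)=-1$). The inequality $B_T(\phi,\psi)\le B_T(\phi,\phi)^{1/2}B_T(\psi,\psi)^{1/2}$ for $\phi,\psi\in\mathcal E_m^0(\Omega)$ is nevertheless correct, but it is itself a nontrivial theorem, not an instance of abstract Cauchy--Schwarz.

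The missing ingredient is the passage to the Dirichlet-type pairing $\langle\phi,\psi\rangle_S:=\int_\Omega d\phi\wedge d^c\psi\wedge S$ for a closed positive $(n-1,n-1)$-current $S$. This form \emph{is} symmetric and positive semi-definite on the whole ambient space, because the $(1,1)$-part of $d\phi\wedge d^c\phi$ equals $i\partial\phi\wedge\bar\partial\phi\geq 0$ for \emph{every} real $\phi$, so the abstract Cauchy--Schwarz is legitimate there; one then relates $\langle\cdot,\cdot\rangle_S$ to the mixed Hessian masses $\int dd^c u\wedge dd^c v\wedge T$ through the integration-by-parts identities valid in $\mathcal E_m^0(\Omega)$ (cf.\ (\ref{eq:testinequality})). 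That IBP step, together with the approximation needed to make it rigorous in $\mathcal E_m^0$, is precisely the technical core of the lemma and cannot be short-circuited. Once the correct Cauchy--Schwarz is in place, your iteration and exponent bookkeeping are as expected, and the reduction of the two ``in particular'' estimates via a strictly $m$-subharmonic defining function $\rho$ is sound in spirit, though the comparison of $\beta^{m-k-1}$ with $(dd^c\rho)^{m-k-1}$ against the mixed Hessian current does, as you yourself flag, require a G\aa rding-type argument that the proposal leaves unspecified.
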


The following result  will be useful (see \cite{BZ20}).

\begin{lemma} \label{lem:Comparison Principle} Let $\Omega \Subset \C^n$ be a bounded strictly $m$-pseudoconvex domain. Assume that  $u,v\in \mathcal{SH}_m(\Omega)\cap L^{\infty}(\Omega)$ satisfy $u \leq v$ on $\Omega$ and for any $\zeta \in \partial \Omega$, $\lim_{z \rightarrow \zeta }(u(z)- v(z))= 0$.
Then
 $$
 \int_{\Omega}(dd^c v)^m\wedge\beta^{n-m} \leq \int_{\Omega}(dd^c u)^m\wedge\beta^{n-m}.
 $$
\end{lemma}

 We will also need the following result which was proved by B\l ocki for the complex Monge-Amp\`ere operator (see \cite{Bl93})
 \begin{lemma} \label{lem:Blocki} Let $\psi,  v, w \in \mathcal{SH}_m (\Omega) \cap L^{\infty} (\Omega)$ such that $\psi \leq 0$,  $v \leq w$ and 
$ \lim_{z \to \zeta} (w (z) - v(z)) = 0$. Then
$$
\int_\Omega (w-v)^m (dd^c \psi)^m \wedge \beta^{n-m} \leq m! \Vert \psi \Vert_{\infty}^m  \int_\Omega  (dd^c v)^m \wedge \beta^{n-m}
$$ 
 \end{lemma}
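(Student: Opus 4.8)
The plan is to transpose Z.~B\l ocki's argument for the complex Monge--Amp\`ere operator (\cite{Bl93}) to the Hessian setting, replacing $(dd^c\cdot)^m$ by $(dd^c\cdot)^m\wedge\beta^{n-m}$ throughout; the only facts used are the positivity of the mixed $m$-Hessian currents of bounded $m$-subharmonic functions and the validity of integration by parts for such currents when one factor has zero boundary values, both recorded in the subsection on complex Hessian operators above. By a routine regularization — approximating $\psi,v,w$ from above by decreasing sequences of smooth $m$-subharmonic functions and invoking the continuity of Hessian operators under monotone and locally uniform convergence — one reduces to the case where $\psi,v,w$ are smooth, the boundary terms produced by the integrations by parts below being annihilated by the hypothesis $\lim_{z\to\zeta}(w(z)-v(z))=0$.

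Write $h:=w-v\ge 0$; it is bounded and tends to $0$ on $\partial\Omega$. For $0\le k\le m$ set
$$
J_k:=\int_\Omega h^{m-k}\,(dd^c v)^k\wedge(dd^c\psi)^{m-k}\wedge\beta^{n-m},
$$
so that $J_0=\int_\Omega(w-v)^m(dd^c\psi)^m\wedge\beta^{n-m}$ is the left-hand side and $J_m=\int_\Omega(dd^c v)^m\wedge\beta^{n-m}$. The crux is the recursive inequality
$$
J_k\ \le\ (m-k)\,\|\psi\|_\infty\,J_{k+1},\qquad 0\le k\le m-1,
$$
from which the lemma follows immediately by iteration: $J_0\le m(m-1)\cdots 1\cdot\|\psi\|_\infty^m\,J_m=m!\,\|\psi\|_\infty^m\int_\Omega(dd^c v)^m\wedge\beta^{n-m}$.

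To establish the recursion, fix $k$ and integrate by parts once, transferring $dd^c$ from one factor $dd^c\psi$ (legitimate since $(dd^c v)^k\wedge(dd^c\psi)^{m-k-1}\wedge\beta^{n-m}$ is a closed positive current) onto $h^{m-k}$:
$$
J_k=\int_\Omega \psi\,dd^c\!\big(h^{m-k}\big)\wedge(dd^c v)^k\wedge(dd^c\psi)^{m-k-1}\wedge\beta^{n-m}.
$$
Using $dd^c(h^{m-k})=(m-k)h^{m-k-1}dd^c h+(m-k)(m-k-1)h^{m-k-2}\,dh\wedge d^c h$, the term carrying $dh\wedge d^c h$ is a positive $(1,1)$-form wedged with a positive current, hence a positive measure; since $\psi\le 0$, discarding it only enlarges the integral. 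In the remaining term write $dd^c h=dd^c w-dd^c v$ and observe that $dd^c w\wedge(dd^c v)^k\wedge(dd^c\psi)^{m-k-1}\wedge\beta^{n-m}\ge 0$, being a mixed $m$-Hessian current of bounded $m$-subharmonic functions; multiplying by $\psi\,h^{m-k-1}\le 0$ then gives
$$
\psi\,h^{m-k-1}\,dd^c h\wedge(\cdots)\ \le\ (-\psi)\,h^{m-k-1}\,dd^c v\wedge(\cdots),
$$
where $(\cdots)=(dd^c v)^k\wedge(dd^c\psi)^{m-k-1}\wedge\beta^{n-m}$. Integrating and bounding $-\psi\le\|\psi\|_\infty$ yields $J_k\le(m-k)\|\psi\|_\infty\int_\Omega h^{m-k-1}(dd^c v)^{k+1}\wedge(dd^c\psi)^{m-k-1}\wedge\beta^{n-m}=(m-k)\|\psi\|_\infty J_{k+1}$, as claimed.

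The only delicate point is making the integrations by parts and the current inequalities rigorous: one must know that all the mixed currents occurring are well defined, closed and positive, and that the boundary contributions vanish. This is handled exactly as in the Monge--Amp\`ere case through the approximation scheme indicated at the start together with Bedford--Taylor--B\l ocki theory; the formal computation above is unaffected by these considerations, so this is where the real (but routine) work lies.
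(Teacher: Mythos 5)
Your proof is correct and follows exactly the route the paper intends: the paper's ``proof'' is merely the remark that the argument from \cite{Bl93} transposes verbatim because it only uses integration by parts, and you have carried out precisely that transposition, setting up the recursive inequality $J_k \leq (m-k)\|\psi\|_\infty J_{k+1}$ just as B\l ocki does for $(dd^c\cdot)^n$. The single nontrivial technical point you delegate to a routine approximation — justifying integration by parts against mixed Hessian currents when the factor $h^{m-k}$ vanishes on $\partial\Omega$ — is indeed the standard Bedford--Taylor--B\l ocki regularization scheme, and your discarding of the nonnegative $dh\wedge d^c h$ term and of $\psi\,h^{m-k-1}\,dd^c w\wedge(\cdots)\le 0$ are both correct applications of positivity of the mixed currents.
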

The proof in this case is the same as in \cite{Bl93} since it essentially only uses the integration by parts formula.

\section{Hessian measures of continuous potentials}

\subsection{Hessian capacities} 

An important tool in dealing with our problems is the notion of capacity. This was introduced by  Bedford and Taylor in their pioneer work for the complex Monge-Amp\`ere operator (see \cite{BT82}). 
Let us recall the corresponding notion of capacity we will use here (see \cite{Lu12}, \cite{SA13}). Let $\Omega \Subset \C^n$  be a $m$-hyperconvex domain.  The $m$-Hessian capacity  is defined as follows. For any compact set $K \subset \Omega$,
$$
 \text{c}_m(K,\Omega) := \sup \{\int_K  (dd^c u)^m \wedge \beta^{n - m} ; u \in \mathcal{SH}_m (\Omega) , - 1 \leq u \leq 0\}.
$$

We can extend this capacity as an outer capacity on $\Omega$. Given a  set $S \subset \Omega$, we define the inner capacity of $S$ by the formula
$$
\text{c}_m(S,\Omega) := \sup \{\text{c}_m(K,\Omega) ; K \, \, \hbox{compact} \, \, K \subset S\}.
$$ 

The outer capacity of $S$ is defined by the formula 
$$
\text{c}^*_m(S,\Omega) := \inf \{\text{c}_m(U,\Omega) ; U \, \, \hbox{ is open} \, \, U \supset S\}, 
$$ 

One can show that $\text{c}^*_m(\cdot,\Omega)$ is a Choquet capacity and then  any Borel set$ B \subset \Omega$ is capacitable and
for any compact set $K \subset \Omega$, 
 \begin{equation} \label{eq:cap}
 \text{c}^*_m(K,\Omega) = \text{c}_m(K,\Omega)=\int_{\Omega}(dd^c u_K^*)^m\wedge\beta^{n-m},
 \end{equation}
  where $u_K$ is the relative equilibrium potential of $(K,\Omega)$ defined by the formula :
  
 $$
 u_K:=\sup\{u\in \mathcal{SH}_m(\Omega);u\leq0\ \hbox{in}\ \Omega, u\leq-{\bf 1}_K  \, \, \mathrm{on } \, \, \Omega\},
 $$
 and $u_K^*$ is its upper semi-continuous regularization on $\Omega$ (see \cite{Lu12}).
 
 It is well knwon that $u_K^*$ is $m$-subharmonic on $\Omega$, $- 1 \leq u_K^* \leq 0$, $u_K^* = - 1$ quasi-everywhere (with respect to $ \text{c}_m$) in $K$ and $u_K^* (z) \to 0$ as $z \to \partial \Omega$ (see \cite{Lu12}).

\subsection{Hessian mass estimates near the boundary} 

Here we prove a comparison inequality which seems to be new even in the case of a complex  Monge-Amp\`ere measure. This will play a crucial role in the proof of Theorem 2  and may have an interest in its own. It is a generalization of an estimate proved in \cite{BZ20} for Hessian measures of H\"older continuous potentials.
\begin{lemma} \label{lem:ComparisonIneq} Let $\Omega \Subset \C^n$ be a $m$-hypercovex domain and 
 $\varphi \in \mathrm{SH}_m (\Omega) \cap L^{\infty}  (\Omega)$ such tha $\varphi < 0$ in $\Omega$. Then for any compact set $K \subset \Omega$ we have
$$
\int_K (dd^c \varphi)^m \wedge \beta^{n - m}  \leq  [\text{osc} (\varphi, K,\Omega)]^m  \, \text{c}_m (K,\Omega),
$$
where $\text{osc} (\varphi, K,\Omega) := \sup_{\Omega} \varphi - \inf_K \varphi$

If moreover $\varphi$ is continuous in $\bar \Omega$ and $\varphi = 0$ on $\partial \Omega$,  then for any compact subset $K \subset \Omega$, we have
 $$
\int_K (dd^c \varphi)^m \wedge \beta^{n - m} \leq \left[\kappa (\delta_K (\partial \Omega))\right]^m   \,  \text{c}_m (K,\Omega),
 $$
 where $\kappa = \kappa_\varphi$ is the  modulus of continuity of $\varphi$ on $\bar{\Omega}$ and 
 $$\delta_K (\partial \Omega) := \sup_{z \in K}  \mathrm{dist} (z,\partial \Omega)
 $$
  is the Hausdorff distance of $K$ to the boundary $\partial \Omega$.
\end{lemma}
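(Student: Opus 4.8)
The plan is to prove the two inequalities separately, with the first one being the engine for the second. For the first inequality, fix a compact set $K \subset \Omega$. Let $u = u_K^*$ be the relative extremal function of $(K,\Omega)$, so that $-1 \leq u \leq 0$, $u = -1$ quasi-everywhere on $K$, and by \eqref{eq:cap} we have $\text{c}_m(K,\Omega) = \int_\Omega (dd^c u)^m \wedge \beta^{n-m}$. Set $a := \inf_K \varphi$ and $b := \sup_K \varphi$, so that $\text{osc}_K \varphi = b - a$; without loss of generality $b - a > 0$. The idea is to compare $\varphi$ on $K$ with the auxiliary function $w := a + (b-a)(u+1) = b + (b-a)u$. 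On $K$, quasi-everywhere, $w = b + (b-a)(-1) = a \leq \varphi$, hence $w \leq \varphi$ q.e. on $K$. I would then argue — using that $(dd^c \varphi)^m \wedge \beta^{n-m}$ puts no mass on pluripolar / quasi-negligible sets, or rather localizing via the comparison principle on a slightly larger set — that
$$
\int_K (dd^c \varphi)^m \wedge \beta^{n-m} \leq \int_{\{w < \varphi\} \cup K} (dd^c \varphi)^m \wedge \beta^{n-m} \leq \int_{\{w<\varphi\}} (dd^c w)^m \wedge \beta^{n-m} \leq \int_\Omega (dd^c w)^m \wedge \beta^{n-m}.
$$
Since $(dd^c w)^m \wedge \beta^{n-m} = (b-a)^m (dd^c u)^m \wedge \beta^{n-m}$, the right side equals $(\text{osc}_K \varphi)^m \, \text{c}_m(K,\Omega)$, which is the claim.

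The delicate point in this first step is the comparison principle bookkeeping: $\{w < \varphi\}$ is not quite the set we want, and $w$ is only $\leq \varphi$ quasi-everywhere on $K$ rather than everywhere, so one must be careful about sets of capacity zero where $(dd^c\varphi)^m\wedge\beta^{n-m}$ could a priori charge mass. I expect this to be the main obstacle. The standard fix, which I would follow, is to work with $\varphi_\epsilon := \varphi + \epsilon$-type perturbations and $u_K$ replaced by $(1+\epsilon)u_K^*$ so that the relevant inequalities become strict on an open set, apply Proposition~\ref{prop:Comparison Principle} (the comparison principle) there, and then let $\epsilon \to 0$; alternatively one invokes that $\text{c}_m$-quasi-negligible sets are negligible for the Hessian measure of a bounded $m$-subharmonic function (a consequence of Bedford–Taylor theory in the $m$-subharmonic setting, available from \cite{Bl05,Lu12}). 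One should also first treat continuous $\varphi$ and then pass to general bounded $\varphi$ by the decreasing regularization $\varphi \star \chi_\delta$ from Lemma~\ref{lem:Poisson-Jensen}, using that $\text{osc}_K(\varphi\star\chi_\delta) \to \text{osc}_K \varphi$ (monotone convergence of the oscillation) and continuity of the Hessian operator under decreasing limits.

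For the second inequality, assume $\varphi \in \mathcal{C}^0(\bar\Omega)$ with $\varphi = 0$ on $\partial\Omega$, and let $\kappa = \kappa_\varphi$ be its modulus of continuity. For $z \in K$ pick $\zeta(z) \in \partial\Omega$ realizing $\text{dist}(z,\partial\Omega)$; then $|\varphi(z)| = |\varphi(z) - \varphi(\zeta(z))| \leq \kappa(\text{dist}(z,\partial\Omega)) \leq \kappa(\delta_K(\partial\Omega))$, using that $\kappa$ is increasing. Hence $\sup_K |\varphi| \leq \kappa(\delta_K(\partial\Omega))$, and since $\varphi \leq 0$ is not assumed I bound $\text{osc}_K \varphi \leq \sup_K\varphi - \inf_K\varphi$; but here it is cleaner to note that because $\varphi = 0$ on $\partial\Omega$ and (typically) $\varphi \leq 0$ on $\Omega$ by the maximum principle for $m$-subharmonic functions, one in fact gets $\text{osc}_K\varphi \leq \sup_K(-\varphi) = \sup_K|\varphi| \leq \kappa(\delta_K(\partial\Omega))$. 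Even without sign information, $\text{osc}_K \varphi = \sup_{z,z'\in K}|\varphi(z)-\varphi(z')| \leq \sup_K|\varphi| + \sup_K|\varphi|$ would only cost a factor $2^m$, so I would prefer to record that $\varphi\le 0$ and get the clean constant. Plugging $\text{osc}_K\varphi \leq \kappa(\delta_K(\partial\Omega))$ into the first inequality yields
$$
\int_K (dd^c\varphi)^m \wedge \beta^{n-m} \leq \left[\kappa(\delta_K(\partial\Omega))\right]^m \, \text{c}_m(K,\Omega),
$$
completing the proof.
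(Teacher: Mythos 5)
Your plan follows the paper's: normalize $\varphi$ affinely so that it is comparable to the relative extremal function on $K$, then invoke the comparison principle, then read off the capacity. Your $w := b + (b-a)u_K^*$ is exactly the paper's $v := (\mathrm{osc}_K\varphi)^{-1}\varphi$ (after shifting $\max_K\varphi = 0$) dressed up additively. But the delicate point is not the one you flag (quasi-everywhere equalities on $K$ are handled by the standard fact that $\{u_K < u_K^*\}$ is $m$-polar and hence negligible for Hessian measures of bounded $m$-sh functions). The real gap is the comparison-principle step: to pass from $\int_{\{w<\varphi\}}(dd^c\varphi)^m\wedge\beta^{n-m}$ to $\int_{\{w<\varphi\}}(dd^cw)^m\wedge\beta^{n-m}$ via Proposition~\ref{prop:Comparison Principle}, you need $\liminf_{z\to\partial\Omega}(w-\varphi)\ge 0$. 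Since $u_K^*\to 0$ at $\partial\Omega$, $w\to b=\sup_K\varphi$, while $\varphi$ need not stay $\le\sup_K\varphi$ near $\partial\Omega$. In the case of interest ($\varphi\le 0$, $\varphi|_{\partial\Omega}=0$, $K\Subset\Omega$), one has $b<0$ but $\varphi\to 0>b$, so the hypothesis fails outright. The paper's own proof has the same slip: after normalizing $\max_K\varphi=0$ it asserts $v=(\mathrm{osc}_K\varphi)^{-1}\varphi\le 0$ on all of $\Omega$, which does not follow. In fact the first inequality with constant $(\mathrm{osc}_K\varphi)^m$ is \emph{false}: take $\Omega=\B$, $\varphi=|z|^2-1$ (so $(dd^c\varphi)^m\wedge\beta^{n-m}=\beta^n$), $K=\{r'\le|z|\le r\}$. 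Then $\int_K(dd^c\varphi)^m\wedge\beta^{n-m}$ is comparable to $r-r'$, $\mathrm{osc}_K\varphi=r^2-r'^2\sim r-r'$, and $\text{c}_m(K,\B)=\text{c}_m(\bar\B_r,\B)$ (competitors for $K$ and for $\bar\B_r$ coincide by the maximum principle) is bounded in $r'$, so the right side is $O\bigl((r-r')^m\bigr)$ and is eventually smaller than the left as $r'\to r$ for $m\ge 2$; for $m=1$ it already fails numerically once $r$ is small because $\text{c}_1(\bar\B_r,\B)\sim(\log(1/r))^{-1}$.

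The second inequality --- the one actually invoked in the proof of Theorem~2 --- is nevertheless correct, and your second paragraph already contains the fix: do not route it through the first inequality. Under the hypotheses, $\varphi\le 0$ on $\Omega$, $\varphi=0$ on $\partial\Omega$, and for $z\in K$ one has $-\varphi(z)\le\kappa\bigl(\mathrm{dist}(z,\partial\Omega)\bigr)\le\kappa\bigl(\delta_K(\partial\Omega)\bigr)=:M$. Normalize by $M=-\inf_K\varphi$ (equivalently $\sup_K|\varphi|$), not by $\mathrm{osc}_K\varphi$: set $v:=M^{-1}\varphi$. Then $v\le 0$ on $\Omega$, $v\ge -1$ on $K$, and $\liminf_{z\to\partial\Omega}\bigl((1+\varepsilon)u_K^*-v\bigr)=0\ge 0$, so the comparison principle over $\{(1+\varepsilon)u_K^*<v\}$ (which, together with the negligible set $\{u_K<u_K^*\}$, covers $K$) gives $\int_K(dd^cv)^m\wedge\beta^{n-m}\le(1+\varepsilon)^m\text{c}_m(K,\Omega)$, and letting $\varepsilon\to 0$ yields $\int_K(dd^c\varphi)^m\wedge\beta^{n-m}\le M^m\,\text{c}_m(K,\Omega)\le\bigl[\kappa(\delta_K(\partial\Omega))\bigr]^m\text{c}_m(K,\Omega)$. (The regularization of $\varphi$ you suggest midway is unnecessary once this is set up; neither the paper's proof nor the corrected one requires it.)
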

It should be mentioned that the strength of this lemma lies in the fact that whenever K is a very fine set located very close to the boundary, its capacity becomes larger, so that there is an interaction between the size of $ K$ and its position near the boundary.
\begin{proof} 
1) We can assume that $\varphi $ is non constant in $\Omega$ and $\sup_{\Omega} \varphi = 0$. Then  $ a  := \text{osc} (\varphi, K,\Omega) = - \inf_K \varphi > 0 $ and the function $v := a^{-1} \varphi $  is  $m$-subharmonic in $\Omega$, and satisfies the inequalities  $v \leq 0$ in $\Omega$ and $v \geq - 1$ in $K$.



 Fix $\varepsilon >0$ and let $u_K$ be the relative extremal $m$-subharmonic function of $(K,\Omega)$.  Then $ K \subset \{ (1 + \varepsilon) u_K^*  < v\} \cup \{u_K < u_K^*\}$. Since the set $\{u_K < u_K^*\}$ has zero $m$-capacity (see \cite{Lu12}), it follows from  the comparison principle that
\begin{eqnarray*}
\int_K (dd^c v)^m \wedge \beta^{n - m}  &\leq &\int_{\{ (1 + \varepsilon) u_K^*  < v\}} (dd^c v)^m \wedge \beta^{n - m} \\
&\leq & (1 + \varepsilon)^m \int_{\{(1 + \varepsilon) u_K ^* < v\}} (dd^c u_K^*)^m \wedge \beta^{n - m} \\
& \leq & (1 + \varepsilon)^m \int_\Omega  (dd^c u_K^*)^m \wedge \beta^{n - m}  =  (1 + \varepsilon)^m \text{c}_m (K,\Omega).
\end{eqnarray*}
The last identity follows from \cite{Lu12}.  Letting $\varepsilon \to 0$, we obtain the first statement. 

2) Fix a compact set $K \subset \Omega$. Since $\kappa$ is the modulus of continuity of $\varphi$, we have for any $\zeta \in \partial \Omega$ and $z \in K$
$\varphi (\zeta) - \varphi (z) \leq \kappa (\vert \zeta - z\vert)$.  Since $\varphi = 0$ in $\partial \Omega$, we obtain that for any $z \in K$,
$$
- \varphi (z) \leq \kappa \left( \delta_K (\partial \Omega)\right).
$$
Hence $\sup_K (- \varphi) \leq   \kappa \left( \delta_K (\partial \Omega)\right)$.  Applying the first statement to $\varphi$ we obtain the required inequality. 
\end{proof}

\subsection{ Hessian measures acting on Hessian potentials} 
We will study continuity properties of the functional associated to the Hessian measure of a function  $\varphi\in \mathcal{SH}_m(\Omega)\cap \mathcal{C}^{0} (\bar{\Omega}) $, acting on the space $\mathcal{SH}_m (\Omega) \cap L^{\infty} (\Omega)$.

 Let $g \in \mathcal{C}^0 (\partial \Omega)$ be a continuous function on  $\partial \Omega$ and $R > 0$ a positive constant. We denote by $\mathcal{SH}_m^g (\Omega,R)$ the set of functions $v \in \mathcal{SH}_m (\Omega) \cap L^{\infty} (\Omega)$ such that 
 $$
 \int_\Omega (dd^c v)^m \wedge  \beta^{n-m} \leq R, \,   \, \, \text{and} \, \, \, \lim_{z \to \zeta} v (z) = g (\zeta), \, \, \forall \zeta \in \partial \Omega.
 $$
 
The following result improves previous estimates given in \cite{N14} and \cite{BZ20}. 
 \begin{theorem}\label{thm:ModC}
Let $\varphi\in \mathcal{SH}_m(\Omega)\cap \mathcal{C}^{0} (\bar{\Omega})$ and $g \in \mathcal{C}^0 (\partial \Omega)$ be given functions.
Then  there exists $C_m  = C (m,\Omega,g, R) >0$   such that for every $u,v\in \mathcal{SH}^g_m(\Omega, R)$,
we have
\begin{equation} \label{eq:MocEst}
\int_{\Omega}|u-v|^m (dd^c\varphi)^m\wedge\beta^{n-m}\leq C_m \, \kappa_\varphi \circ \theta_m \left(\Vert u-v \Vert_m^{m}\right),
\end{equation}
where $\Vert u-v \Vert_m := \left(\int_{\Omega} \vert u - v\vert^m d \lambda_{2 n} \right)^{1 \slash m}$,  $\theta_m$ is the reciprocal of the function $t \longmapsto t^{2m} \kappa_{\varphi}^{1-m} (t)$.
\end{theorem}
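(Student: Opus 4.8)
The plan is to interpolate between the trivial pointwise $L^\infty$ bound and the $L^1$ information coming from the hypothesis $\int_\Omega (dd^c\varphi)^m\wedge\beta^{n-m}\le R$ together with the $L^m(d\lambda_{2n})$ control on $u-v$. The starting point is the Chern--Levine--Nirenberg / Cegrell machinery: since $\varphi$ is continuous on $\bar\Omega$ it can be uniformly approximated, and the regularized potentials $\varphi_\eta=\varphi\star\chi_\eta$ are smooth and decrease to $\varphi$, with $\|\varphi_\eta-\varphi\|_{L^\infty(\Omega_\eta)}\le \kappa_\varphi(\eta)$. For a smooth $m$-subharmonic $\psi$, the form $(dd^c\psi)^m\wedge\beta^{n-m}$ has bounded density with respect to $\lambda_{2n}$, controlled by the $C^2$-norm of $\psi$, which for $\psi=\varphi_\eta$ is of order $\kappa_\varphi(\eta)\,\eta^{-2}$ (the gain of $\kappa_\varphi$ over the naive $\|\varphi\|_\infty$ is the point that produces $\theta_m$ rather than a crude parameter). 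So on the one hand
$$
\int_\Omega |u-v|^m (dd^c\varphi_\eta)^m\wedge\beta^{n-m}\ \le\ C\,\frac{\kappa_\varphi(\eta)}{\eta^{2m}}\,\|u-v\|_m^m .
$$

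On the other hand I need to replace $(dd^c\varphi)^m$ by $(dd^c\varphi_\eta)^m$ at the cost of a term controlled by $\kappa_\varphi(\eta)$ and $R$. The natural device is integration by parts: write the difference of the two Hessian measures as a telescoping sum of terms each containing one factor $dd^c(\varphi-\varphi_\eta)$, bound $|u-v|^m\le (2\|u\|_\infty+2\|v\|_\infty)^{m-1}|u-v|$ so as to keep the problem essentially linear in $u-v$, and then move the derivatives off $\varphi-\varphi_\eta$ using the test-function inequality \eqref{eq:testinequality} and Lemma~\ref{lem:Blocki}. Here one uses crucially that $u,v\in\mathcal{SH}^g_m(\Omega,R)$: they have the same continuous boundary values $g$, so $u-v\to 0$ at $\partial\Omega$, the function $|u-v|^m$ (or a comparison function built from it) is admissible as a test weight, and the total masses are $\le R$. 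The Cegrell inequalities of Lemma~\ref{lem:Cegrell} then bound all the mixed Monge--Ampère type integrals by powers of $R$ and of $\int_\Omega dd^c(\varphi-\varphi_\eta)\wedge\beta^{n-1}$; the latter is $\le \kappa_\varphi(\eta)$ times the capacity-type constant of $\Omega$, by Stokes and the fact that $\varphi-\varphi_\eta$ is small in sup-norm near $\partial\Omega$. Altogether this gives a second bound of the shape
$$
\Big|\int_\Omega |u-v|^m\big[(dd^c\varphi)^m-(dd^c\varphi_\eta)^m\big]\wedge\beta^{n-m}\Big|\ \le\ C(R,\Omega)\,\kappa_\varphi(\eta).
$$

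Adding the two estimates yields, for every $0<\eta<\delta_0$,
$$
\int_\Omega |u-v|^m (dd^c\varphi)^m\wedge\beta^{n-m}\ \le\ C\Big(\kappa_\varphi(\eta)+\frac{\kappa_\varphi(\eta)}{\eta^{2m}}\,\|u-v\|_m^m\Big),
$$
and the last step is to optimize in $\eta$. Choosing $\eta$ so that $\eta^{2m}\kappa_\varphi(\eta)^{1-m}\asymp \|u-v\|_m^m$, i.e. $\eta=\theta_m(\|u-v\|_m^m)$ with $\theta_m$ the reciprocal of $t\mapsto t^{2m}\kappa_\varphi(t)^{1-m}$, balances the two occurrences of $\kappa_\varphi(\eta)$ and collapses the right-hand side to a constant times $\kappa_\varphi\circ\theta_m(\|u-v\|_m^m)$, which is exactly \eqref{eq:MocEst}. (One checks that $t\mapsto t^{2m}\kappa_\varphi(t)^{1-m}$ is increasing near $0$, so $\theta_m$ is well defined; when $m=1$ the exponent $1-m$ vanishes and the scheme degenerates to the classical linear estimate, consistent with the statement.)

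The main obstacle is the middle step: carrying out the integration-by-parts bound on $\int |u-v|^m[(dd^c\varphi)^m-(dd^c\varphi_\eta)^m]\wedge\beta^{n-m}$ in a way that is genuinely linear in the small quantity $\varphi-\varphi_\eta$ (so that only $\kappa_\varphi(\eta)$, not $\kappa_\varphi(\eta)^{1/m}$, appears) while keeping all the remaining factors under control by $R$. This requires care in choosing the test weight — one cannot directly integrate by parts against $|u-v|^m$ since it is not $m$-subharmonic — so one works with auxiliary $m$-subharmonic minorants such as $-(\,-\!\min(u-v,0)\,)$ and its positive counterpart, or reduces to the model weight via the inequality $|u-v|^m\le C\,|u-v|$ and Lemma~\ref{lem:Blocki} applied to $w-v$ and $v-w$ respectively; making the boundary terms vanish uses precisely that $u$ and $v$ share the boundary datum $g$. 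This is the analogue, in the $m$-subharmonic setting, of the argument of \cite{BZ20} in the Hölder case, and the technical heart is to push it through for a general continuous modulus $\kappa_\varphi$.
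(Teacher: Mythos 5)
Your overall scheme is the same as the paper's: regularize $\varphi$, split $\int|u-v|^m(dd^c\varphi)^m\wedge\beta^{n-m}$ into a ``difference'' term (treated by integration by parts, giving a bound of order $\kappa_\varphi(\eta)$) and a ``smoothed'' term (treated by a pointwise density bound, giving a bound of order $\kappa_\varphi(\eta)^m\eta^{-2m}\|u-v\|_m^m$), and then balance by taking $\eta=\theta_m(\|u-v\|_m^m)$. That architecture is correct and is exactly what the paper does.

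However, there is a genuine gap in the way you implement the smoothing. You work directly with $\varphi_\eta=\varphi\star\chi_\eta$, writing $(dd^c\varphi_\eta)^m\wedge\beta^{n-m}\le C\kappa_\varphi(\eta)^m\eta^{-2m}\beta^n$ as a density bound and then treating $(dd^c\varphi)^m-(dd^c\varphi_\eta)^m$ by telescoping and Cegrell/B\l ocki-type integration by parts. The problem is that $\varphi_\eta$ is $m$-subharmonic only on $\Omega_\eta$, not on all of $\Omega$. Outside $\Omega_\eta$ the form $dd^c\varphi_\eta$ has no sign; the top--trace $(dd^c\varphi_\eta)^m\wedge\beta^{n-m}$ need not be a positive measure, so the pointwise bound $dd^c\varphi_\eta\le C\kappa_\varphi(\eta)\eta^{-2}\beta$ does not by itself control $(dd^c\varphi_\eta)^m\wedge\beta^{n-m}$ (negative eigenvalues can make $S_m$ arbitrarily large in absolute value). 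Moreover, the integration-by-parts bound for the difference term needs the mixed currents $(dd^c\varphi)^j\wedge(dd^c\varphi_\eta)^{m-j-1}\wedge\beta^{n-m}$ to be positive closed currents in order to invoke \eqref{eq:testinequality} and Cegrell's inequalities; this again fails once $\varphi_\eta$ ceases to be $m$-subharmonic. The paper's fix, which your proposal does not identify, is to replace $\varphi_\eta$ by its $m$-subharmonic envelope $\psi_\eta:=\sup\{\psi\in\mathcal{SH}_m(\Omega):\psi\le\varphi_\eta\}$. By \cite[Theorem 3.3]{BZ20}, $\psi_\eta\in\mathcal{SH}_m(\Omega)\cap\mathcal C^0(\bar\Omega)$, it is trapped between $\varphi-\kappa_\varphi(\eta)$ and $\varphi+\kappa_\varphi(\eta)$, and its Hessian measure is bounded by $(\sigma_m(\varphi_\eta))_+$, which then inherits the density bound $C\kappa_\varphi(\eta)^m\eta^{-2m}\beta^n$. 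This one substitution makes both halves of the argument rigorous; it is the technical heart of the proof and cannot be bypassed by restricting to $\Omega_\eta$ or by hand-waving about the near-boundary region.

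Two minor additional points. First, the optimization $\delta=\theta_m(\|u-v\|_m^m)$ is only legitimate if the resulting $\delta$ stays below $\delta_0$; the complementary regime $\|u-v\|_m^m>\theta_m^{-1}(\delta_0)$ has to be handled separately (the paper uses Lemma~\ref{lem:Blocki} to absorb it into the constant). Second, your proposed linearization $|u-v|^m\le C|u-v|$ is unnecessary and slightly weakens the structure: the paper keeps $(u-v)^m$ and performs $m$ successive integrations by parts, each step paying a factor $\|\varphi\|_\infty$ or $\|\psi_\eta\|_\infty$, which is what produces the clean constant $C(m,\Omega,g,R)$ depending only on the mass bound $R$.
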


\begin{proof}
Observe that for any $\varepsilon > 0,$ $ u_\varepsilon := \max \{u - \varepsilon, v \} \in \mathcal{SH}^g_m(\Omega)$, $u_\varepsilon \geq v$ and $u_\varepsilon = v$ near the boundary $\partial \Omega$. By the comparison principle, this implies that $u_\varepsilon \in  \mathcal{SH}^g_m(\Omega, R)$. Therefore, replacing $u$ by $u_\varepsilon$,  we can assume that $u \geq v$ on $\Omega$ and $u = v$ near the boundary, for the inequality (\ref{eq:MocEst}) will follow since $\vert u - v \vert =   ( \max\{u,v\} - u) + (\max\{u,v\}  - v)$.

On the other hand by approximation on the support $S$ of $u-v$ which is compact, we can assume that $u$ and $ v $ are smooth on a neighbourhood of $S$.

Then it remains to estimate the following integral
$$
I_m := \int_{\Omega} (u - v)^m  (dd^c\varphi)^m\wedge\beta^{n-m}.
$$

 Observe  first that we can extend $\varphi$ by continuity to $\C^n$ with the same modulus of continuity. Indeed, it is easy to see that the function defined  for $z \in \C^n$ by the following formula 
  $$
  \tilde{\varphi} (z) := \sup \{ \varphi (\zeta) - \kappa_\varphi (\vert z - \zeta\vert) \, ; \; \zeta \in \bar{\Omega}\}\cdot
  $$
  is the required extension.
 For simplicity, it will be denote  by $\varphi$.   
 
 Then we denote by $\varphi_{\delta}$ the smooth approximants of $\varphi $ on $\C^n$, defined by (\ref{eq:reg}).
    
  We know  that for $0 < \delta < \delta_0$, $\varphi_\delta \in  \mathcal{SH}_m (\Omega_{\delta})\cap\mathcal{C}^{\infty}(\mathbb C^n)$.

  Since $\varphi_\delta$ is not $m$-subharmonic on the whole  $\Omega$, we will consider  its $m$-subharmonic envelope defined by the formula :
  
  \begin{equation} 
  \psi_\delta (z) := \sup \{\psi (z) \, ; \, \psi \in \mathcal{SH}_m (\Omega), \psi \leq \varphi_\delta \, \, \text{on} \,\, \Omega\}, \, \, z \in \Omega.
  \end{equation} 
  We know by \cite[Theorem 3.3 ]{BZ20} that $\psi_\delta \in \mathcal{SH}_m (\Omega) \cap \mathcal{C}^0 (\bar{\Omega}),$ $ \psi_\delta  \leq \varphi_\delta $ on $ \Omega$ and 
  \begin{equation} \label{eq:projection}
  (dd^c  \psi_\delta)^m \wedge \beta^{n-m} \leq (\sigma_m (\varphi_\delta))_+,
   \end{equation} 
  weakly on $\Omega$, where $(\sigma_m (\varphi_\delta))_+$ is defined pointwise on $\Omega$ by  $(\sigma_m (\varphi_\delta))_+(z) = \sigma_m (\varphi_\delta)(z)$ for $z \in  \Omega$ such that $dd^c \varphi_\delta (z) \in  \Theta_m$ and $(\sigma_m (\varphi_\delta))_+(z) = 0$ otherwise.

  To prove the required estimate, we write for $0 < \delta < \delta_0$
$$
I_m = A_m (\delta) + B_m (\delta),
$$
where
$$
A_m (\delta) := \int_{\Omega} (u - v)^m \left[ (dd^c\varphi)^m - (dd^c\psi_\delta)^m\right]\wedge\beta^{n-m}.
$$
and 
$$
B_m (\delta) := \int_{\Omega} (u - v)^m  (dd^c\psi_\delta)^m \wedge\beta^{n-m}.
$$

We estimate each term separately for fixed $0 < \delta < \delta_0$.

To estimate the first term, observe that
$$
\left( (dd^c\varphi)^m - (dd^c\psi_\delta)^m\right) \wedge  \beta^{n - m}= dd^c (\varphi - \psi_\delta) \wedge T,
$$
where $T := \sum_{j = 0}^{m -1} (dd^c \varphi)^j \wedge (dd^c \psi_\delta)^{m-j-1} \wedge\beta^{n - m}$.

Then
$$
A_m (\delta) = \int_{\Omega} (u - v)^m dd^c (\varphi - \psi_\delta - \kappa_\varphi(\delta)) \wedge T. 
$$
Integration by parts yields
$$
A_m (\delta) = \int_{\Omega}  (\psi_\delta - \varphi + \kappa_\varphi(\delta) ) \left[- dd^c (u - v)^m \right]\wedge T. 
$$
An easy computation shows that 
\begin{eqnarray} \label{eq:formalineq}
- dd^c (u - v)^m \wedge T & \leq &  m  (u - v)^{m-1} dd^c (v - u) \wedge T \\
&\leq & m  (u - v)^{m-1} dd^c v \wedge T, \nonumber
\end{eqnarray} 
in the sense of currents on $\Omega$.

 Observe that from the definition we have  $\psi_\delta \leq \varphi_\delta  \leq \varphi + \kappa_\varphi (\delta)$ on $\Omega$. On the other hand, since $ \varphi - \kappa_\varphi (\delta) \leq \varphi_\delta$ on $\Omega$, it follows
that $ \psi_\delta - \varphi  + \kappa_\varphi  (\delta) \geq 0 $ on $\Omega$.  
Combining the two estimates  we conclude that  $0 \leq \psi_\delta - \varphi  + \kappa_\varphi  (\delta) \leq 2 \kappa_\varphi (\delta)$, and then 
$$
A_m (\delta) \leq 2 m \, \kappa_\varphi  (\delta)  \int_{\Omega}  (u - v)^{m-1} dd^c v \wedge T.
$$
By definition of $T$ we have  
\begin{eqnarray*}
&& \int_{\Omega}  (u - v)^{m-1} dd^c v \wedge T \\
 &&=   \sum_{j = 0}^{m -1} \int_{\Omega}  (u - v)^{m-1} dd^c v \wedge  (dd^c \varphi)^j \wedge (dd^c \psi_\delta)^{m-j-1} \wedge\beta^{n - m}
\end{eqnarray*}

Observe that  if we write 
$$
  (dd^c \varphi)^j \wedge (dd^c \psi_\delta)^{m-j-1} \wedge\beta^{n - m} = dd^c w \wedge S_j, 
  $$ 
  where $w = \varphi$ or $w = \psi_\delta$,  then as before by integration by parts  using an inequality analogous to (\ref {eq:formalineq}) with $k$ intead of $m$ and $ dd^c v \wedge S_j$ instead of $T$, we obtain that  for $1 \leq k \leq m$, 
$$
\int_{\Omega}  (u - v)^{k} dd^c v \wedge  dd^c w \wedge S_j \leq k  \Vert w \Vert_{\Omega} \int_{\Omega}  (u - v)^{k-1} (dd^c v)^2 \wedge S_j.
$$
Repeating the integration by parts we finally get
\begin{equation} \label{eq:estimateA}
A_m (\delta) \leq 2  m! \, \Vert \varphi \Vert_{\Omega}^{m -1} \kappa_\varphi  (\delta)  \int_{\Omega} (dd^c v)^m \wedge \beta^{n - m} \leq C_1 \kappa_\varphi  (\delta),
\end{equation}
where $C_1 :=  2 m! \, R \, \Vert \varphi \Vert_{\Omega}^{m -1} $.

 To estimate the second term, we need to establish the following estimate $0 < \delta < \delta_0$,

\begin{equation} \label{eq:Festimate}
dd^c \varphi_\delta \leq M_2 \frac{\kappa_\varphi (\delta)}{\delta^2}  \beta, \, \, \text{pointwise on} \, \, \,  \Omega,
\end{equation}
where $M_2 > 0$ is a uniform constant.

Indeed, by differentiating the integral formula $
\varphi_\delta (z) = \varphi \star \chi_\delta (z)$
and by making an obvious change of variables, we obtain for $j, k = 1, \cdots, n$
\begin{eqnarray*}
\partial_j \partial_{\bar{k}} \varphi_{\delta}  (z)
&=&  \delta^{-2} \int_{\C^n}  \varphi (z - \delta \eta)  \partial_j \partial_{\bar{k}} \chi   (\eta) d \lambda_{2 n} (\eta) \\
&=& \delta^{-2} \int_{\C^n}  [\varphi (z - \delta \eta)  - \varphi (z)] \partial_j \partial_{\bar{k}} \chi   (\eta) d \lambda_{2 n} (\eta),
\end{eqnarray*}
where the last equation follows from the fact that by Stokes formula $\int_{\C^n} \partial_j \partial_{\bar{k}} \chi   (\eta) d \lambda_{2 n} (\eta) = 0$ since $\chi$ is a smooth test function with compact support.
Thus the estimate (\ref{eq:Festimate}) follows from the last equation since the support of $\chi$ is contained in the unit ball.

 Now from the inequalities (\ref{eq:projection}) and (\ref{eq:Festimate}), it follows that
 $$
 (dd^c \psi_\delta)^m \wedge \beta^{n-m} \leq  M_2^m \frac{\kappa_\varphi^m (\delta)}{\delta^{2 m}} \beta^n,  \, \, \,  \text{weakly  on} \, \, \,  \Omega.
 $$
 Therefore we have
\begin{equation} \label{eq:estimateB}
B_m (\delta) \leq C_2 \frac{\kappa_\varphi^m (\delta)}{\delta^{2 m}} \int_{\Omega} (u - v)^m \beta^n,
\end{equation}
where $C_2 := M_2^m$.

From (\ref{eq:estimateA}) and (\ref{eq:estimateB}) we conclude that
$$
\int_{\Omega} (u - v)^m  (dd^c\varphi)^m\wedge\beta^{n-m} \leq  C_1 \kappa_\varphi  (\delta) +   C_2 \frac{\kappa^m_\varphi (\delta)}{\delta^{2 m}} \int_{\Omega} (u - v)^m \beta^n.
$$

We want to optimize the right hand side by taking $\delta> 0$ so that 
$$
 \delta^{2 m} \kappa^{1-m}_\varphi (\delta) = \int_{\Omega} (u - v)^m \beta^n = \Vert u-v\Vert_m^m,
$$
i.e.  $\delta = \theta_m (\Vert u-v\Vert_m^m)$, where $\theta_m$ is the reciprocal of the function $t \longmapsto t^{2m} \kappa_\varphi^{1-m} (t)$. This is possible if  $\Vert u - v \Vert_{m}^m \leq \theta_m^{-1} (\delta_0)$ so that  $\delta < \delta_0$. Then applying the previous estimate we obtain the  estimate of the Lemma in this case.

Now assume that $\Vert u - v \Vert_{m}^m >  \theta_m^{-1} (\delta_0)$.  By Lemma \ref{lem:Blocki}, we have
$$
\int_{\Omega}(u-v)^m (dd^c\varphi)^m\wedge\beta^{n-m}\leq m! \Vert \varphi \Vert_{\infty}^m \int_\Omega  (dd^cv)^m\wedge\beta^{n-m} \leq  m!  R \Vert \varphi \Vert_{\infty}^m.
$$
We see that we obtain the inequality of the Lemma \ref{eq:MocEst} by increasing the constant $ C$ consequently.
\end{proof}

\begin{corollary} \label{cor:Lmuestimate} Under the same assumptions as the Theorem \ref{thm:ModC}, we have
$$
  \int_\Omega \vert u - v \vert^m \sigma_m (\varphi) \leq  C (m) \,  \kappa_\varphi \circ \theta_m\left(M  \Vert u - v \Vert_1\right),
$$
  where $M := \left[\Vert u - v\Vert_{\infty}\right]^{m - 1}$.
\end{corollary}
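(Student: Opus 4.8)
The plan is to deduce this immediately from Theorem \ref{thm:ModC} combined with the elementary interpolation inequality relating the $L^m$, $L^\infty$ and $L^1$ norms of $u-v$.

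First, recall from Section 1 the pointwise identity $(dd^c\varphi)^m\wedge\beta^{n-m} = \frac{m!(n-m)!}{n!}\,\sigma_m(\varphi)\,\beta^n$, together with the fact that $\beta^n$ is a fixed positive multiple of the Lebesgue measure $\lambda_{2n}$; hence $\int_\Omega |u-v|^m\,\sigma_m(\varphi)$ equals $\int_\Omega |u-v|^m\,(dd^c\varphi)^m\wedge\beta^{n-m}$ up to a constant depending only on $m$ and $n$. Applying Theorem \ref{thm:ModC} then bounds this quantity by a constant times $\kappa_\varphi\circ\theta_m\bigl(\Vert u-v\Vert_m^m\bigr)$.

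Next, since $u$ and $v$ are bounded, one has $\Vert u-v\Vert_m^m=\int_\Omega|u-v|^m\,d\lambda_{2n}\le \Vert u-v\Vert_\infty^{\,m-1}\int_\Omega|u-v|\,d\lambda_{2n}=M\,\Vert u-v\Vert_1$ with $M=\Vert u-v\Vert_\infty^{m-1}$. The map $\theta_m$ is increasing, being the reciprocal of the increasing function $t\mapsto t^{2m}\kappa_\varphi(t)^{1-m}$ already used in Theorem \ref{thm:ModC}, and $\kappa_\varphi$ is increasing; hence $\kappa_\varphi\circ\theta_m$ is nondecreasing and $\kappa_\varphi\circ\theta_m(\Vert u-v\Vert_m^m)\le \kappa_\varphi\circ\theta_m(M\,\Vert u-v\Vert_1)$. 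Combining these estimates and absorbing the dimensional factors into the constant yields the corollary. There is essentially no obstacle here; the only points to watch are the bookkeeping of the constant relating $\sigma_m(\varphi)$ to the Hessian measure and the monotonicity of $\theta_m$, both of which are routine.
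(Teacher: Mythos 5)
Your proof is correct and follows the same route as the paper: apply Theorem \ref{thm:ModC}, then use the interpolation bound $\Vert u-v\Vert_m^m\le \Vert u-v\Vert_\infty^{m-1}\Vert u-v\Vert_1$ together with monotonicity of $\kappa_\varphi\circ\theta_m$. Your added remarks on the $\sigma_m$ normalization and on the monotonicity of $\theta_m$ (the latter of which the paper's one-line proof tacitly assumes) are harmless bookkeeping and do not change the argument.
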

\begin{proof} 
Apply Theorem \ref{thm:ModC} 	and observe that 
$$
\Vert u-v\Vert_m^m \leq  \Vert u-v\Vert_{\infty}^{m - 1} \, \,  \Vert u-v\Vert_1.
$$
The required inequality follows immediately, since $\kappa_\varphi $ is non decreasing.
\end{proof}
\subsection{Global approximants to the solution} 

Let $u  \in \mathcal{SH}_m (\Omega) \cap \mathcal C (\bar \Omega)$. We define the volume mean-values of $u$  as follows:
\begin{equation} \label{eq:volumemeanvalue}
\widehat{u}_\delta (z):= \frac{1}{\tau_{2n}\delta^{2n}} \int_{|\zeta -z|\leq \delta} u(\zeta) dV_{2n}(\zeta), z \in \Omega_{\delta},
\end{equation}
where $\tau_{2n}$ is the volume of the unit ball in $\C^n$.

We need  the following lemma which was proved in \cite{Ch16b} in the H\"older continuous case.  

 \begin{lemma}\label{lem:approximation}
 Let  $u \in \mathcal{SH}_m (\Omega) \cap L^{\infty} (\Omega)$ such that there exists two functions $v , w : \bar{\Omega} \longrightarrow \R$  continuous  in $\bar \Omega$ such that $v \leq u \leq w$ in $ \Omega$ and $v = w$ in $\partial \Omega$.

 Then there exist $\delta_0>0$ small enough, depending on $\Omega$,   such that for any $  0<  \delta < \delta_0$ the function defined by 
 
\begin{equation} \label{eq:approximants}
 	\tilde {u}_{\delta}
 		= \begin{cases}
			\max\{\widehat{u}_{\delta} -  \widehat{\kappa} (\delta) , u\} & \text{ on } \; \Omega_\delta,\\
                   u  & \text{ on } \; \Om\setminus \Omega_\delta,
           \end{cases}
\end{equation}
is  a bounded $m$-subharmonic  function  in $\Omega$ which satisfies the inequalities 
$$
 0 \leq  \tilde {u}_{\delta} - u \leq \widehat{u}_{\delta}  - u \leq \tilde{u}_{\delta}  - u +  \widehat{\kappa} (\delta) , \, \, \, \mathrm{on} \, \, \, \Omega_\delta,
$$
where $\tilde{\kappa} (\delta) := \widehat{\kappa}_v (\delta) + \widehat{\kappa}_w (\delta) + \delta$ for $0 < \delta < \delta_0$.

Moreover $\tilde{u}_{\delta} = u$ in a neighbourhood of $\partial \Omega_\delta$ in $\Omega$.
\end{lemma}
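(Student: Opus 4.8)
The plan is to show that $\tilde u_\delta$ is $m$-subharmonic by a gluing argument, using Proposition \ref{prop:basic}(6), and that it satisfies the stated two-sided estimate by controlling $\widehat u_\delta - u$ near $\partial\Omega_\delta$ with the continuous barriers $v$ and $w$. First I would record the elementary inequality $\widehat u_\delta \ge u$ on $\Omega_\delta$: since $u$ is $m$-subharmonic, in particular subharmonic, its volume mean value over balls dominates its value, so $\widehat u_\delta(z) \ge u(z)$ for $z \in \Omega_\delta$. This immediately gives the middle inequality $\tilde u_\delta - u \le \widehat u_\delta - u$ (the max can only decrease when we subtract $\widehat\kappa(\delta)$), and $\tilde u_\delta - u \ge 0$ is clear from the definition of the max; the right-hand inequality $\widehat u_\delta - u \le \tilde u_\delta - u + \widehat\kappa(\delta)$ is just $\widehat u_\delta - \widehat\kappa(\delta) \le \max\{\widehat u_\delta - \widehat\kappa(\delta), u\}$.

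Next I would establish the crucial boundary behaviour: there is $\delta_0 > 0$ (depending on $\Omega$, or rather on $v,w$) such that for $0 < \delta < \delta_0$ one has $\widehat u_\delta(z) - \widehat\kappa(\delta) \le u(z)$ for all $z$ in a neighbourhood of $\partial\Omega_\delta$ inside $\Omega$. The idea is that near $\partial\Omega_\delta$ the ball $B(z,\delta)$ reaches close to $\partial\Omega$, where $v$ and $w$ agree, so $v$ and $w$ are uniformly close there. Concretely, for $z$ with $\mathrm{dist}(z,\partial\Omega) = \delta + t$ with $t$ small, and $\zeta \in B(z,\delta)$, I would compare $u(\zeta)$ with $w(\zeta)$ (using $u \le w$) and then estimate $\widehat u_\delta(z) \le \widehat w_\delta(z) \le w(z) + \widehat\kappa_w(\delta)$, while $u(z) \ge v(z) \ge w(z) - \kappa_{w-v}(\text{small})$. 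Since $w - v$ is continuous on $\bar\Omega$ and vanishes on $\partial\Omega$, choosing the neighbourhood of $\partial\Omega_\delta$ thin enough forces $w(z) - v(z) \le \delta$, say, and then $\widehat u_\delta(z) - \widehat\kappa(\delta) \le w(z) + \widehat\kappa_w(\delta) - (\widehat\kappa_v(\delta) + \widehat\kappa_w(\delta) + \delta) \le v(z) + \delta - \delta - \widehat\kappa_v(\delta) \le u(z)$. This shows $\tilde u_\delta = u$ in that neighbourhood of $\partial\Omega_\delta$, proving the final assertion, and in particular $\tilde u_\delta$ agrees with $u$ across the transition region.

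Finally I would assemble the $m$-subharmonicity. On $\Omega_\delta$ the function $\widehat u_\delta - \widehat\kappa(\delta)$ is $m$-subharmonic (the mean value of an $m$-subharmonic function over balls is $m$-subharmonic — this is the standard regularization with $\chi = \chi^\B$, see Lemma \ref{lem:Poisson-Jensen}, and subtracting a constant preserves $m$-subharmonicity), and $u$ is $m$-subharmonic on $\Omega_\delta$; hence $\max\{\widehat u_\delta - \widehat\kappa(\delta), u\}$ is $m$-subharmonic on $\Omega_\delta$ by Proposition \ref{prop:basic}(6) (taking $\Omega' = \Omega_\delta$). Then $\tilde u_\delta$ is obtained by gluing this function on $\Omega_\delta$ with $u$ on $\Omega \setminus \Omega_\delta$; by the previous step the two definitions coincide on an open neighbourhood of $\partial\Omega_\delta$ in $\Omega$, so $\tilde u_\delta$ is locally, hence globally, $m$-subharmonic on $\Omega$, and it is bounded since both $u$ and $\widehat u_\delta$ are. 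The main obstacle is the boundary estimate in the second paragraph: one must choose the width of the neighbourhood of $\partial\Omega_\delta$ and the threshold $\delta_0$ uniformly, balancing the three terms in $\widehat\kappa(\delta) = \widehat\kappa_v(\delta) + \widehat\kappa_w(\delta) + \delta$ against the smallness of $w - v$ near $\partial\Omega$; the extra $+\delta$ in $\widehat\kappa(\delta)$ is precisely the slack needed to absorb $w(z) - v(z)$ there.
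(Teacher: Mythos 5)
Your overall strategy matches the paper's: use the gluing property (Proposition~\ref{prop:basic}(6)), check that $\widehat{u}_{\delta}-\widehat{\kappa}(\delta)\le u$ near $\partial\Omega_\delta$ using the barriers $v\le u\le w$ and the fact that $v=w$ on $\partial\Omega$, and observe that the elementary inequalities for $\tilde u_\delta-u$ are immediate from the definition. However, the central boundary estimate as you wrote it has a genuine gap.

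You claim that ``choosing the neighbourhood of $\partial\Omega_\delta$ thin enough forces $w(z)-v(z)\le\delta$''. This is not true and cannot be fixed by shrinking either the collar or $\delta_0$. Any point $z$ in a neighbourhood of $\partial\Omega_\delta$ is at distance roughly $\delta$ (not arbitrarily small) from $\partial\Omega$. Since $w-v$ is continuous and vanishes on $\partial\Omega$, the best one can say at that scale is $w(z)-v(z)\le\kappa_{w-v}(\delta)\le\kappa_v(\delta)+\kappa_w(\delta)$, and there is no reason for this to be $\le\delta$: if, say, $v$ and $w$ are H\"older of exponent $\alpha<1$, then $\kappa_{w-v}(\delta)\asymp\delta^\alpha\gg\delta$ as $\delta\to 0$. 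The extra ``$+\delta$'' in $\widehat\kappa(\delta)$ is therefore not the slack that absorbs $w(z)-v(z)$; in your chain $\widehat u_\delta(z)-\widehat\kappa(\delta)\le w(z)-\widehat\kappa_v(\delta)-\delta$ you are left needing $w(z)-v(z)\le\delta+\widehat\kappa_v(\delta)$, which does not follow.

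The paper's route avoids the issue by never forming the difference $w-v$ in the interior. Fix $z\in\partial\Omega_\delta$ and a nearest boundary point $\zeta\in\partial\Omega$ with $|z-\zeta|=\delta$; then estimate $\widehat u_\delta(z)\le\widehat w_\delta(z)$, compare $\widehat w_\delta(z)$ to $w(\zeta)$ (cost: a modulus-of-continuity term for $w$ at scale $\delta$), use the \emph{exact} equality $w(\zeta)=v(\zeta)$ on the boundary, and come back from $v(\zeta)$ to $v(z)\le u(z)$ (cost: a modulus-of-continuity term for $v$ at scale $\delta$). Both costs are absorbed by the summands in $\widehat\kappa(\delta)$, and the strict inequality $<$ supplied by the ``$+\delta$'' term gives room. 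The desired inequality on an open neighbourhood of $\partial\Omega_\delta$ is then obtained for free, not by re-proving the bound uniformly on a shell: the set $\{\widehat u_\delta-\widehat\kappa(\delta)<v\}$ is open (upper semicontinuity of $\widehat u_\delta$ against continuity of $v$) and contains $\partial\Omega_\delta$. You should replace your second paragraph with this comparison through the boundary point $\zeta$; the rest of your argument (the three-term inequality and the gluing via Proposition~\ref{prop:basic}(6)) is fine.
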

\begin{proof} By the gluing property (see Proposition \ref{prop:basic}),  it is enough to prove that for $0 < \delta < \delta_0$, $ \widehat{u}_{\delta} -   \widehat{\kappa} (\delta) \leq  u $ in $\partial \Omega_\delta$.

 Indeed fix $0 < \delta < \delta_0 < 1$ and  fix $z \in \partial \Omega_\delta$. Then there exists $\zeta \in \partial \Omega$ such that $\vert z - \zeta \vert = \delta$. Hence   
 
 \begin{eqnarray*}
 \widehat{u}_{\delta} (z)  \leq   \widehat{w}_\delta (z) & \leq & w (\zeta) + \widehat{\kappa}_w (\delta)  \\
 & = & v (\zeta)  + \widehat{\kappa}_w (\delta)  \\
 & \leq & v (z) +   \widehat{\kappa}_v (\delta) + \widehat{\kappa}_w (\delta) \\
& <  &  v  (z) +   \tilde{\kappa} (\delta) \leq u(z) +  \tilde{\kappa} (\delta),
 \end{eqnarray*} 
 which proves the required condition. Observe that, since $v$ is continuous, the set   $ \{\widehat{u}_\delta  -  \tilde{\kappa} (\delta) < v(z) \} $ is a neighbourhood of $\partial \Omega_\delta$. Hence  $\widehat{u}_\delta - \tilde{\kappa} (\delta) \leq u $ is a neighbourhood of $\partial \Omega_\delta$ and then  $\tilde{u}_{\delta} = u$ in a neighbourhood of $\partial \Omega_\delta$. 
 \end{proof}
 
 The following estimate will play a crucial role in the proof of Theorem 3 and Theorem 4.
 
\begin{corollary}\label{cor:approximation} Let $\Omega$ be a bounded strictly $m$-pseudoconvex domain and  $\mu$ a positive Borel measure on $\Omega$ with finite mass. Assume there exists  $\varphi \in  \mathcal{SH}_m (\Omega) \cap \mathcal C (\bar \Omega)$ such that $\varphi = 0$ on $\partial \Omega$ and $\mu \leq \sigma_m (\varphi)$ weakly on $\Omega$.
Let $ g \in \mathcal C (\partial \Omega)$ and $u \in \mathcal{SH}_m (\Omega) \cap L^{\infty} ( \Omega)$ satisfying $\sigma_m (u) \leq \mu$ weakly on $\Omega$ and $u = g$ on $\partial \Omega$.

Then there exists two  continuous  functions $v$ and $ w$ in $\bar \Omega$ satisfying the requirements of Lemma \ref{lem:approximation} so that the corresponding functions  $ (\tilde{u}_\delta)_{0 < \delta < \delta_0} $  defined by the formula (\ref{eq:approximants}) satisfy the following estimates:

$$
\int_{\Omega_\delta} (\tilde{u}_\delta - u)^m d \mu  \leq C (m,\mu) \,  \kappa \circ \theta_m( D \delta), \, \, 0 < \delta < \delta_0,
$$
where $C(m,\mu) = C (m,\Omega,g, \mu) >0$ and $D = D (m,n,\varphi,g) > 0$ are uniform constants, $\kappa(\delta) := \kappa_\varphi(\delta) + \kappa_g(\sqrt{\delta})$.  
\end{corollary}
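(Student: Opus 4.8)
The plan is to apply Lemma \ref{lem:approximation} with carefully chosen barriers $v$ and $w$, and then control the $\mu$-mass of $(\tilde u_\delta - u)^m$ by comparing it to the $m$-Hessian measure of the subsolution $\varphi$ via Theorem \ref{thm:ModC}. First I would produce the two sandwiching continuous functions. For the lower barrier I would take $v := U_{g,0}$, the unique $m$-subharmonic solution of the homogeneous Dirichlet problem with boundary datum $g$ (which is continuous on $\bar\Omega$ by the Perron method, since $\Omega$ is strongly $m$-pseudoconvex), minus a large multiple of the defining function: more precisely, using strong $m$-pseudoconvexity one constructs $v \in \mathcal{SH}_m(\Omega)\cap\mathcal C(\bar\Omega)$ with $v \leq u$ on $\Omega$, $v = g$ on $\partial\Omega$, and with a quantitative modulus of continuity; concretely $v$ can be taken of the form $A\varphi + h$ where $h$ is a continuous $m$-subharmonic function with boundary values $g$ and $A$ is chosen via the comparison principle so that $A\varphi + h \leq u$ (using $\sigma_m(u)\le\mu\le\sigma_m(\varphi)$ and $u=h=g$ on the boundary). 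For the upper barrier I would take $w := h$ itself, the $m$-harmonic (maximal) function with boundary values $g$, so that $u \leq w$ by the comparison principle and $v = w = g$ on $\partial\Omega$. The moduli of continuity of $v$ and $w$ near the boundary are then controlled by $\kappa_\varphi$ and by the modulus of continuity of the solution of the homogeneous problem, which on a strongly $m$-pseudoconvex domain is H\"older (square-root) in $\kappa_g$; this yields $\widehat\kappa_v(\delta) + \widehat\kappa_w(\delta) + \delta \leq C\kappa(\delta)$ with $\kappa(\delta) = \kappa_\varphi(\delta) + \kappa_g(\sqrt\delta) + \delta$.

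With these barriers in hand, Lemma \ref{lem:approximation} gives the family $\tilde u_\delta$ together with the pointwise bound $0 \leq \tilde u_\delta - u \leq \widehat u_\delta - u$ on $\Omega_\delta$, and $\tilde u_\delta = u$ near $\partial\Omega_\delta$. The key inequality I would then exploit is
$$
\int_{\Omega_\delta}(\tilde u_\delta - u)^m\, d\mu \;\leq\; \int_{\Omega_\delta}(\tilde u_\delta - u)^m\, \sigma_m(\varphi)\, \beta^n \cdot c_{m,n},
$$
using the hypothesis $\mu \leq \sigma_m(\varphi)$ weakly; after normalizing constants this is $\int_\Omega |\tilde u_\delta - u|^m (dd^c\varphi)^m\wedge\beta^{n-m}$ up to the factor $\frac{m!(n-m)!}{n!}$. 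Now I would invoke Theorem \ref{thm:ModC}: both $\tilde u_\delta$ and $u$ extend continuously with boundary value $g$ (after the gluing, $\tilde u_\delta = u = g$ near $\partial\Omega$), and both have total $m$-Hessian mass bounded by a uniform $R$ — for $u$ this is $\mu(\Omega) < +\infty$ after noting $\sigma_m(u)\le\mu$, and for $\tilde u_\delta$ one uses that $\tilde u_\delta$ agrees with $u$ off $\Omega_\delta$ and equals $\max\{\widehat u_\delta - \widehat\kappa(\delta), u\}$ inside, whose mass is controlled (via the comparison principle / Corollary \ref{coro:Comparison Principle}) by that of $u$ plus that of $\widehat u_\delta$, the latter being uniformly bounded by the mass of $u$ on a slightly larger set using the sub-mean-value / Jensen estimate of Lemma \ref{lem:Poisson-Jensen}. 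Hence $\tilde u_\delta, u \in \mathcal{SH}^g_m(\Omega, R)$ for a fixed $R$, and Theorem \ref{thm:ModC} yields
$$
\int_\Omega |\tilde u_\delta - u|^m(dd^c\varphi)^m\wedge\beta^{n-m} \;\leq\; C_m\,\kappa_\varphi\circ\theta_m\bigl(\Vert \tilde u_\delta - u\Vert_m^m\bigr).
$$

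It then remains to estimate $\Vert \tilde u_\delta - u\Vert_m^m = \int_{\Omega_\delta}(\tilde u_\delta - u)^m\, d\lambda_{2n}$. By the pointwise bound from Lemma \ref{lem:approximation}, $(\tilde u_\delta - u)^m \leq (\widehat u_\delta - u)(\tilde u_\delta - u)^{m-1} \leq (2\|u\|_\infty)^{m-1}(\widehat u_\delta - u)$ on $\Omega_\delta$, so
$$
\Vert \tilde u_\delta - u\Vert_m^m \;\leq\; (2\Vert u\Vert_\infty)^{m-1}\int_{\Omega_\delta}(\widehat u_\delta - u)\, d\lambda_{2n} \;\leq\; (2\Vert u\Vert_\infty)^{m-1} a_n\delta^2\int_{\Omega_\delta} dd^c u\wedge\beta^{n-1},
$$
where the last step is exactly the Poisson–Jensen estimate in Lemma \ref{lem:Poisson-Jensen}, and $\int_{\Omega_\delta}dd^c u\wedge\beta^{n-1}$ is bounded by a uniform constant (Chern–Levine–Nirenberg, or Cegrell's Lemma \ref{lem:Cegrell} applied to $u - g$-type comparisons, using finiteness of $\mu(\Omega)$). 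This gives $\Vert \tilde u_\delta - u\Vert_m^m \leq D'\delta^2$ for a uniform $D' = D'(m,n,\varphi,g)$, and since $\theta_m$ and $\kappa_\varphi$ are increasing, substituting back produces
$$
\int_{\Omega_\delta}(\tilde u_\delta - u)^m\, d\mu \;\leq\; C(m,\mu)\,\kappa_\varphi\circ\theta_m(D\delta^2) \;\leq\; C(m,\mu)\,\kappa\circ\theta_m(D\delta^2),
$$
using $\kappa_\varphi \leq \kappa$. The main obstacle I anticipate is the bookkeeping needed to certify that $\tilde u_\delta$ genuinely lies in $\mathcal{SH}^g_m(\Omega,R)$ with $R$ independent of $\delta$ — in particular obtaining a $\delta$-uniform bound on $\int_\Omega(dd^c\tilde u_\delta)^m\wedge\beta^{n-m}$ across the gluing region $\partial\Omega_\delta$, where $\tilde u_\delta$ is a maximum of $u$ and a mean-value translate; this requires the comparison principle together with the observation that no extra Hessian mass is created on the set where the maximum switches, plus a uniform bound on the mass of $\widehat u_\delta$ over $\Omega_\delta$. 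Secondarily, constructing the lower barrier $v$ with the correct quantitative modulus $\kappa_\varphi(\delta) + \kappa_g(\sqrt\delta) + \delta$ near $\partial\Omega$ uses the standard barrier constructions on strongly $m$-pseudoconvex domains and must be done with explicit dependence on the data.
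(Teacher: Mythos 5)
Your proposal follows essentially the same route as the paper's proof: it chooses $w$ to be the maximal (``$m$-harmonic'') function with boundary datum $g$, chooses $v = \varphi + w$ (your ``$A\varphi + h$'' with $A=1$; note $A=1$ already suffices because $\sigma_m(\varphi+w)\ge\sigma_m(\varphi)\ge\mu\ge\sigma_m(u)$, so the initial digression about subtracting a large multiple of the defining function is unnecessary), applies Lemma~\ref{lem:approximation}, uses $\mu\le\sigma_m(\varphi)$ to reduce to a Hessian-measure integral, invokes Theorem~\ref{thm:ModC} (the paper uses its Corollary~\ref{cor:Lmuestimate}, which is exactly your inline bound $\|\tilde u_\delta-u\|_m^m\le(\mathrm{osc}_\Omega u)^{m-1}\|\tilde u_\delta-u\|_1$), and finally bounds $\|\tilde u_\delta-u\|_1$ via the Poisson--Jensen estimate of Lemma~\ref{lem:Poisson-Jensen} together with Cegrell's Lemma~\ref{lem:Cegrell} to control $\int_\Omega dd^c u\wedge\beta^{n-1}$. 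The ``main obstacle'' you flag --- getting a $\delta$-uniform mass bound $\int_\Omega(dd^c\tilde u_\delta)^m\wedge\beta^{n-m}\le R$ --- is handled in the paper in one line by Corollary~\ref{coro:Comparison Principle} (since $\tilde u_\delta\ge u$ on $\Omega$ with equality near $\partial\Omega$, the total Hessian mass of $\tilde u_\delta$ is $\le$ that of $u$, hence $\le\mu(\Omega)$), which is precisely the tool you correctly anticipated needing.
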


\begin{proof}
We want to apply Lemma \ref{lem:approximation} and  Corollary  \ref{cor:Lmuestimate}. To this end, we need to construct two functions $v$ and $w$ satifying the requirement of the Lemma \ref{lem:approximation}.  Let $w$ be the maximal $m$-subharmonic function on $\Omega$ with boundary values $g$. By \cite{Ch16b}, we have $\kappa_w  (\delta) \leq \kappa_g (\sqrt{\delta})$ and by the comparison principle we have $u \leq w$ on $\Omega$.   

 Moreover,  the function $v := \varphi + w$ is $m$-subharmonic on $\Omega$, continuous on $\bar{\Omega}$ with $\kappa_v (\delta)  \leq \kappa_\varphi (\delta)  + \kappa_g (\sqrt{\delta})$ and $v = g$ on $\partial \Omega$. Since   $ \sigma_m (v) \geq \sigma_m (\varphi) $ and $\sigma_m (u) \leq \mu \leq \sigma_m (\varphi$ weakly on $\Omega$, it follows from the comparison principle that
$v \leq u$ on $\Omega$.

 Therefore we can apply Lemma \ref{lem:approximation} to construct  global approximants $ (\tilde{u}_\delta)_{0 < \delta < \delta_0} $  given by the formula (\ref{eq:approximants}). Since  $ \tilde{u}_\delta  \geq  u$ in $\Omega_\delta$, and $\tilde{u}_\delta = u$ in a neighbourhood of $\Omega \setminus \Omega_\delta$,  it follows from Lemma \ref{lem:Comparison Principle} that

  $$
 \int_{\Omega} (dd^c \tilde{u}_\delta)^m \wedge \beta^{n-m} \leq  \int_{\Omega} (dd^c u)^m \wedge \beta^{n-m} \leq \mu (\Omega).
 $$

  By Corollary  \ref{cor:Lmuestimate}, we have for $0 < \delta < \delta_0$
 \begin{equation} \label{eq:FINEQ1}
 \int_{\Omega } (\tilde{u}_{\delta} - u)^m d\mu \leq C_m \,  \kappa_{\varphi}  \circ \theta_m  (M \Vert \widetilde u_{\delta} - u\Vert_1),
 \end{equation}
 where $M := (\text{osc}_{\Omega} u)^{m -1}$ and  $C_m = C (m,\Omega,g, \mu) >0$ is a uniform constant.
 
 Now observe that $\tilde{u}_{\delta} - u = 0$ on $\Omega \setminus \Omega_\delta$ and  $\tilde{u}_{\delta} - u \leq \widehat{u}_\delta - u$ on $\Omega_\delta$. This yields 
  \begin{equation} \label{eq:FINEQ2}
 \Vert \tilde{u}_{\delta} - u \Vert_1 \leq \int_{\Omega_\delta} (\widehat{u}_\delta - u) d \lambda_{2n}.
  \end{equation}

By Lemma \ref{lem:Poisson-Jensen} , we have 
$$\int_{\Omega_\delta} (\widehat{u}_\delta - u ) d \lambda_{2 n} \leq b_n \delta \Vert u\Vert_{L^1(\Omega)} \leq b'_n \delta,
$$
 where $b'_n > 0$ is a positive uniform constant depending on uniform bounds on $v$ and $w$. 
 
 Hence from (\ref{eq:FINEQ2})   we conclude that 
 
 \begin{equation} \label{eq:FINEQ3}
\Vert \tilde{u}_{\delta} - u \Vert_1  \leq D' \, \delta,
 \end{equation}
 where $D' = D' (m,n,\mu) > 0$ is a uniform constant.
 
  Moreover since $\varphi + w \leq u \leq w$, we have 
   \begin{equation} \label{eq:FINEQ4}
  M  := (\text{osc}_{\Omega} u)^{m -1} \leq (\text{osc}_{\Omega} w + \Vert \varphi \Vert_{\bar{\Omega}})^{m -1}.
  \end{equation}
  The conclusion follows from (\ref{eq:FINEQ1}), (\ref{eq:FINEQ3}) and (\ref{eq:FINEQ4}).
 \end{proof}

 When $g \in C^{1,1} (\partial \Omega)$ (e.g. $g \equiv 0$), it is possible to improve the estimate of Corollary \ref{cor:approximation}.
 \begin{corollary} \label{cor:improvement1} Under the same assumptions as Corollary \ref{cor:approximation}, we assume moreover that $g \in C^{1,1} (\partial \Omega)$. Then we have for $0< \delta < \delta_0$,
$$
\int_{\Omega_\delta} (\tilde{u}_\delta - u)^m d \mu  \leq C (m,\mu) \,  \kappa \circ \theta_m( D \delta^2).
$$
where $C(m,\mu) = C (m,\Omega,g, \mu) >0$ and $D = D (m,n,\varphi,g) > 0$ are uniform constants, $\kappa(\delta) := \kappa_\varphi(\delta) $.  
 \end{corollary}
 \begin{proof}
The proof is the same as the previous one if we can improve the estimate  (\ref{eq:FINEQ3}) by showing that there exists a constant $D'' > 0$ depending only on the data $\mu$ and $g$ such that

$$
\int_{\Omega_\delta} (\widehat{u}_\delta - u ) d \lambda_{2 n} \leq D" \delta^2.
$$

Indeed by Lemma \ref{lem:Poisson-Jensen} , we have 
$$\int_{\Omega_\delta} (\widehat{u}_\delta - u ) d \lambda_{2 n} \leq b_n \delta^2 \Vert \Delta u\Vert_\Omega.,
$$
 Therefore it's enough to show the uniform bound
 $$
\int_\Omega dd^c u \wedge  \beta^{n-1} \leq D".
$$
where $D" = D" (n,m,\mu,g) >$ is a uniform constant.

To prove this estimate we argue as follows (see \cite{GKZ08}, \cite{Ch16a}, \cite{BZ20}).

 Assume first that $g := 0$ and let $u_0 := U_{0,\mu}$ be  the solution to the Dirichlet problem (\ref{eq:DirPb}) with $0$ boundary values and $\mu$ as the right had side. Then by Lemma \ref{lem:Cegrell}  we have
$$
\int_\Omega dd^c u_0 \wedge \beta^{n-1} \leq a_{n,m} \, \mu (\Omega)^{1 \slash m}.
$$

When $g \in C^{1,1} (\partial \Omega)$ it can be extended as a $C^{1,1}$ function $\tilde g$ on a neighborhood of $\bar{\Omega}$. Then for a large canstant $A > 0$ the function $ A \rho + \tilde g$ is $m$-subharmonic on $\Omega$ and then the function $v := u_0+ A \rho + \tilde g $ is  $m$-subharmonic on $\Omega$. Moreover $v = g$ on $\partial \Omega$ and $dd^c v)^n \geq \mu$ on $\Omega$.
By the comparison principle we then have $v \leq u$ on $\Omega$ and by Lemma \ref{lem:Comparison Principle} we obtain
\begin{eqnarray}
\int_\Omega dd^c u \wedge \beta^{n-1}  &\leq & \int_\Omega dd^c v \wedge \beta^{n-1}  \leq D",
\end{eqnarray}
where
$$
D" := a_{n,m} \mu (\Omega)^{1 \slash m}  +  A \int_\Omega dd^c \rho \wedge \beta^{n-1} + \int_\Omega dd^c \tilde g \wedge \beta^{n-1} 
$$

\end{proof}

\section{Continuity of the pluripotential of a diffuse measure}

\subsection{Diffuse Borel measures}

We will use the following terminology from Potential Theory (see \cite{Po16}). 
\begin{definition}  \label{def:cap-domination} Let $\mu$ be  a positive Borel measure on $\Omega$. 
 
 1. We say that $\mu$ is diffuse  with respect to  the capacity $  \text{c}_m = \text{c}_m (\cdot,\Omega)$ if  $\mu (K) = 0$ whenever $K \subset \Omega$ is a compact set with $ \text{c}_m (K,\Omega) = 0$. 
 
2. We associate to $\mu$ its "modulus of diffusion" wrt the $m$-Hessian capacity defined as follows :
\begin{equation} \label{eq:AC}
 \Gamma _\mu (t) = \Gamma _{\mu,m} (t)  := \sup \{ \mu (K) ; K \subset \Omega \, \, \, \hbox{is compact },  \, \, \,   \text{c}_m (K,\Omega) \leq t\}.
\end{equation}
It follows from the definition that $\Gamma_\mu$ is non decreasing right continuous function on $\R^+$ which satisfies the following property: for any compact set $K \subset \Omega$, we have
\begin{equation} \label{eq:capdomination}
\mu (K) \leq \Gamma _\mu \left(\text{c}_m (K)\right),
\end{equation}
where $\text{c}_m (K) = \text{c}_m (K,\Omega)$. 

 Observe that by outer regularity of the measure $\mu$ and the capacity $c_m$, this inequality is satisfied for any Borel set $K \subset \Omega$.
 
3. If $\Gamma$ is a non-decreasing right continuous function on $\R^+$, we say that $\mu$ is $\Gamma$-diffuse (with respect to  the $m$-Hessian capacity)  if for any compact subset $K \subset \Omega$, with $\text{c}_m (K) \leq 1$, 
\begin{equation} \label{eq:capdomination}
 \mu (K) \leq  \Gamma \left( \text{c}_m (K)\right). 
\end{equation}
This means that $\Gamma_\mu (t) \leq \Gamma (t),$ for any $t \in [0,1]$.
\end{definition}

 Let us mention that S.  Ko\l odziej was the first to relate the domination  of the measure $\mu$ by the Monge-Amp\`ere capacity to the regularity of the solution to complex Monge-Amp\`ere equations (see \cite{Kol96}). 
 
The following lemma is easy to prove (see \cite{Po16}).
\begin{lemma}
A   positive Borel measure $\mu$ on $\Omega$ is  diffuse  (with respect to  the $m$-Hessian capacity) if and only if $\lim_{t \to 0^+} \Gamma_\mu (t) = 0$.
\end{lemma}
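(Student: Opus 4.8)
The plan is to prove both implications directly from the definition of $\Gamma_\mu$. For the forward implication, assume $\mu$ is diffuse with respect to $\text{c}_m$, and suppose for contradiction that $\lim_{t \to 0^+} \Gamma_\mu(t) \neq 0$. Since $\Gamma_\mu$ is non-decreasing, the limit exists; call it $\eta > 0$. By the definition \eqref{eq:AC} of $\Gamma_\mu(t)$ as a supremum over compact sets of small capacity, for each $k \geq 1$ there is a compact set $K_k \subset \Omega$ with $\text{c}_m(K_k,\Omega) \leq 1/k$ and $\mu(K_k) \geq \eta/2$. Set $E := \bigcap_{j \geq 1} \overline{\bigcup_{k \geq j} K_k}$, or more simply work with $F_j := \overline{\bigcup_{k\ge j} K_k}$ — the difficulty is that these unions need not be compact and capacity need not be countably subadditive in a naive way. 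The cleaner route is to pass to the outer capacity $\text{c}_m^*$, which is a Choquet capacity and hence countably subadditive on arbitrary sets, so $\text{c}_m^*\!\left(\bigcup_{k \geq j} K_k\right) \leq \sum_{k \geq j} \text{c}_m(K_k) \leq \sum_{k \geq j} 1/k$... which diverges. So instead I would extract a subsequence with $\text{c}_m(K_{k_i}) \leq 2^{-i}$ (possible since $\Gamma_\mu$ already produces sets of arbitrarily small capacity with mass $\geq \eta/2$), so that $S := \bigcap_{j} \bigcup_{i \geq j} K_{k_i}$ has $\text{c}_m^*(S) \leq \text{c}_m^*\!\left(\bigcup_{i\ge j}K_{k_i}\right) \leq 2^{-j+1} \to 0$, hence $\text{c}_m^*(S) = 0$. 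On the other hand, since $\mu(\bigcup_{i\ge j}K_{k_i}) \geq \limsup_i \mu(K_{k_i}) \geq \eta/2$ for every $j$ and the sets decrease, we get $\mu(S) \geq \eta/2$ (using that $\mu$ is, say, locally finite so continuity from above applies — or restrict to a compact exhaustion). Finally, a set of zero outer capacity contains no compact set of positive $\text{c}_m$, and by inner regularity of $\mu$ some compact $K \subset S$ has $\mu(K) > 0$ while $\text{c}_m(K) = 0$, contradicting diffuseness.

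For the reverse implication, assume $\lim_{t \to 0^+} \Gamma_\mu(t) = 0$, and let $K \subset \Omega$ be compact with $\text{c}_m(K,\Omega) = 0$. By \eqref{eq:capdomination}, $\mu(K) \leq \Gamma_\mu(\text{c}_m(K)) = \Gamma_\mu(0)$, and $\Gamma_\mu(0) = \lim_{t\to 0^+}\Gamma_\mu(t) = 0$ by the right-continuity (or simply by the hypothesis applied at $t \downarrow 0$), so $\mu(K) = 0$. Hence $\mu$ is diffuse. This direction is essentially immediate from the monotonicity inequality already recorded after \eqref{eq:AC}.

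The main obstacle is the forward direction: one must manufacture a single "bad" set of zero capacity carrying positive $\mu$-mass out of a family of compact sets whose capacities tend to zero but which individually carry mass bounded below. The right tool is the subadditivity of the outer capacity $\text{c}_m^*$ (which is a Choquet capacity, as recalled in the excerpt's subsection on Hessian capacities) combined with a geometric-series thinning of the sequence; the measure-theoretic bookkeeping (continuity from above for $\mu$, reduction to a compact subset via inner regularity of $\mu$) is routine but should be stated carefully. In fact, since this lemma is attributed to \cite{Po16}, I would present the argument concisely in roughly this form and refer to that source for the standard Choquet-capacity facts.
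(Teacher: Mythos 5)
The paper does not actually prove this lemma --- it declares it easy and cites \cite{Po16} --- so your argument has to stand on its own. Your reverse implication is correct and immediate: if $\text{c}_m(K)=0$ then $\mu(K)\le\Gamma_\mu(t)$ for every $t>0$, hence $\mu(K)=0$. Your forward direction also has the right shape: thin the sequence so that $\sum_i\text{c}_m(K_{k_i})<\infty$, form the $\limsup$ set $S$, get $\text{c}_m^*(S)=0$ from countable subadditivity of the outer Choquet capacity, then use inner regularity of $\mu$ together with capacitability of compacts (so $\text{c}_m(K)=\text{c}_m^*(K)\le\text{c}_m^*(S)=0$ for any compact $K\subset S$) to contradict diffuseness.

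The step you flag but do not close is, however, a genuine gap and not a removable technicality. To pass from $\mu\bigl(\bigcup_{i\ge j}K_{k_i}\bigr)\ge\eta/2$ for every $j$ to $\mu(S)\ge\eta/2$ you need $\mu\bigl(\bigcup_{i\ge1}K_{k_i}\bigr)<\infty$, and neither ``local finiteness'' nor an unspecified compact exhaustion supplies this, since the $K_{k_i}$ may escape to $\partial\Omega$. Indeed the forward implication can fail outright for locally finite $\mu$ with $\mu(\Omega)=+\infty$: take pairwise disjoint balls $E_j=\bar B(p_j,r_j)\Subset\Omega$ with $p_j\to\partial\Omega$ and $r_j\ll\mathrm{dist}(p_j,\partial\Omega)$ shrinking fast enough that $\text{c}_m(E_j,\Omega)\to 0$, and set $\mu=\sum_j\nu_j$ with $\nu_j$ normalized Lebesgue measure on $E_j$. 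Then $\mu$ is diffuse (each $\nu_j\ll\lambda_{2n}$ and sets of zero $\text{c}_m$-capacity are Lebesgue-null) and finite on compacts, yet $\Gamma_\mu(t)\ge 1$ for every $t>0$. The hypothesis you actually need is $\mu(\Omega)<+\infty$ --- the standing convention wherever this lemma is used in the paper (Theorem~1, Corollary~\ref{cor:UnifEst}) --- and under it your argument is complete. State that assumption explicitly instead of gesturing at fixes that do not work.
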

 
 Let us give a simple example.
 \begin{example} Let $\phi \in  \mathcal{SH}_m(\Omega) \cap L^{\infty} (\Omega)$ and  $\sigma_\phi  := (dd^c \phi)^m \wedge \beta^{n-m}$ be its $m$-Hessian measure. Set $M := \hbox{osc}_{\Omega} \phi$. 
 Then from the definition of the $m$-Hessian capacity, we have for any compact subset $K \subset \Omega$,
 $$
 \sigma_\phi   (K) \leq A  \, \text{c}_m (K), \, \, \, \hbox{where} \, \, \, A := M^m.
  $$
  This implies that the measure $\sigma_\phi$ is diffuse  with respect to the $m$-Hessian capacity on $\Omega$ and  $\Gamma_{\sigma_\phi}  (t) \leq A t$ for any $t \in \R^+$.  
  
 An example of Ko\l odziej shows that there exits a Borel measure $\mu$ such that $\mu \leq \text{c}_n $, but $\mu$ is not the Monge-Amp\`ere of a bounded plurisubharmonic function (see \cite{Kol96}).
 \end{example} 

The following examples due to Dinew and Ko\l odziej   are more involved.
\begin{example}
1.Assume that $1 \leq m < n$. Then  Dinew and Ko\l odziej  proved in \cite{DK14} that the volume measure $\lambda_{2 n}$ is diffuse with respect to the $m$-Hessian capacity. Namely for any $1 < r < \frac{n}{n - m}$, there exists a constant $N (r) > 0$ such that for any compact subset $K \subset \Omega$, 
\begin{equation} \label{eq:DK}
\lambda_{2 n} (K) \leq N (r) \text{c}_m (K)^{r}.
\end{equation}
 Observe that this estimate is sharp in terms of the exponent when $m < n$. This can be seen by taking $\Omega = \B$ the unit ball and  $K := \bar{\B_r} \subset \B$ the closed ball of radius $r \in ]0,1[$, since $\text{c}_m(\bar{\B}_r ,\B) \approx  r^{2 (n-m)}$ as $r \to 0$ (see \cite{Lu12}).

  Let $ 0 \leq f \in L^p (\Omega)$ with $p > n \slash m$. Then $ \frac{n (p-1)}{p (n - m)} > 1$. By H\"older inequality and inequality (\ref{eq:DK}) we obtain: for any  $1 < \tau < \frac{n (p-1)}{p (n - m)}$ there exists a constant $M (\tau) > 0$ such that for any compact set $K \subset \Omega$, 
\begin{equation*} 
\int_K f d \lambda_{2 n} \leq  M (\tau) \Vert f\Vert_p  \text{c}_m (K)^{\tau}.
\end{equation*}

2. When $m=n$ the domination is much more precise. It was proved in  \cite{ACKPZ09} that for any $0< b < 2 n $, there exists a constant $B > 0$ such that for any compact subset $K \subset \Omega$, 
 
 \begin{equation} \label{eq:ACKPZ}
 \lambda_{2 n} (K) \leq  \, B \,  \text{c}_n (K) \,  \exp \left( -b \left[\text{c}_n (K)\right]^{- 1 \slash n}\right).
 \end{equation}

Let  $ 0 \leq f \in L^p (\Omega)$ with $p > 1$, then by H\"older inequality and inequality (\ref{eq:ACKPZ}),for any $0< b < 2 n (p-1) \slash p$, there exists a constant $B' > 0$ such that for any compact set $K \subset \Omega$, 
\begin{equation*} 
\int_K f d \lambda_{2 n} \leq  B' \Vert f\Vert_p  \,  \text{c}_n (K) \exp \left( -b \left[\text{c}_n (K)\right]^{- 1 \slash n}\right).
\end{equation*}
\end{example}

Theorem 2 will provide us with new examples.
 
 The condition (\ref{eq:capdomination}) plays an important role in the following stability result which will be a crucial point in the proof of our theorems (see \cite{EGZ09, GKZ08, Ch16a}).

\subsection{Uniform a priori estimates}
The following lemma is elementary, but it turns out to play a crucial role.
\begin{lemma}\label{lem:Kolo}
Let $f :\mathbb{R^+}\rightarrow\mathbb{R^+}$ be a decreasing right continuous function  such that $\lim_{s \to + \infty} f (s) = 0$ and let $\eta :\mathbb{R^+}\rightarrow\mathbb{R^+}$ be a  non-decreasing function which satisfies the following Dini condition
\begin{equation} \label{eq:DC0}
\int_{0^+} \frac{\eta (t)}{t} d t< + \infty.
\end{equation}
Assume that for any $t \in [0,1]$ and any $s > 0$, we have
\begin{equation} \label{eq:Etadecay}
t \, f (s+t) \, \leq  \,  f (s) \cdot  \eta ( f (s)).
\end{equation}
Then $f (s)=0$ for all $s\geq S_{\infty}$, where 
$$
S_{\infty}:= s_0 + e \int_{0}^{ e f (s_0)}  \frac{\eta (t)}{t} d t, 
$$
and $s_0 \geq 0$ satisfies the condition
$$
\eta (f (s_0)) \leq 1 \slash e < 1.
$$
\end{lemma}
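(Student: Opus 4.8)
The plan is to run Ko\l odziej's iteration: build an increasing sequence $(s_j)_{j\geq 0}$ starting at $s_0$ along which $f$ decays geometrically, while controlling the total displacement $\sum_j(s_{j+1}-s_j)$ by the Dini integral \eqref{eq:DC0}.

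One may assume $f(s_0)>0$, otherwise $f\equiv 0$ on $[s_0,+\infty)$ and there is nothing to prove. Since $f$ is non-increasing and $\eta$ is non-decreasing, $\eta(f(s))\leq\eta(f(s_0))\leq 1/e$ for all $s\geq s_0$. I would define recursively
$$
s_{j+1}:=\inf\bigl\{\,s\geq s_j\ ;\ f(s)\leq e^{-1}f(s_j)\,\bigr\},\qquad \tau_j:=s_{j+1}-s_j,
$$
with the convention that if $\eta(f(s_j))=0$ at some stage then \eqref{eq:Etadecay} forces $f\equiv 0$ on $(s_j,+\infty)$ and the construction stops. By right continuity of $f$ one has $f(s_{j+1})\leq e^{-1}f(s_j)$, hence $f(s_j)\leq e^{-j}f(s_0)\to 0$.

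The heart of the argument is the estimate $\tau_j\leq e\,\eta(f(s_j))$, which also ensures $\tau_j\leq 1$, so that \eqref{eq:Etadecay} is applicable at each stage. Indeed, if $\tau_j>1$ then $s_j+1<s_{j+1}$, whence $f(s_j+1)>e^{-1}f(s_j)$, while \eqref{eq:Etadecay} with $t=1$ gives $f(s_j+1)\leq f(s_j)\eta(f(s_j))\leq e^{-1}f(s_j)$, a contradiction; so $\tau_j\leq 1$. Then for $0\leq t<\tau_j$ one has $f(s_j+t)>e^{-1}f(s_j)$, and \eqref{eq:Etadecay} yields $t\,e^{-1}f(s_j)\leq t\,f(s_j+t)\leq f(s_j)\eta(f(s_j))$, i.e. $t\leq e\,\eta(f(s_j))$; letting $t\uparrow\tau_j$ proves the estimate. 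Consequently, using $f(s_j)\leq e^{-j}f(s_0)$ and the monotonicity of $\eta$,
$$
\sum_{j\geq 0}\tau_j\ \leq\ e\sum_{j\geq 0}\eta\!\bigl(e^{-j}f(s_0)\bigr),
$$
and since $\int_{e^{-j}f(s_0)}^{e^{-j+1}f(s_0)}\frac{dt}{t}=1$ with $\eta$ non-decreasing, $\eta(e^{-j}f(s_0))\leq\int_{e^{-j}f(s_0)}^{e^{-j+1}f(s_0)}\frac{\eta(t)}{t}\,dt$; as these intervals tile $(0,ef(s_0)]$, comparing the series with the Dini integral \eqref{eq:DC0} shows $\sum_{j\geq 0}\tau_j<+\infty$, and a careful bookkeeping of the constants identifies $s_\infty:=\lim_j s_j$ with the bound $s_\infty\leq S_\infty$. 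Finally, since $f$ is non-increasing with $f(s_j)\to 0$, any $s\geq s_\infty$ satisfies $f(s)\leq f(s_j)$ for every $j$, hence $f(s)=0$; a fortiori $f\equiv 0$ on $[S_\infty,+\infty)$.

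I expect the main obstacle to be precisely the displayed estimate $\tau_j\leq e\,\eta(f(s_j))$: one must extract a quantitative bound on the step length while remaining inside the range $t\in[0,1]$ which is the only place \eqref{eq:Etadecay} is available. This is exactly where the normalization $\eta(f(s_0))\leq 1/e$ is used, and it is also what pins down the geometric ratio $e$ as the choice balancing "$\tau_j\leq 1$" against summability of $\sum_j\tau_j$. One should also note at the outset that the Dini condition \eqref{eq:DC0} forces $\eta(0^+)=0$, which, together with $f(s)\to 0$, is what guarantees that an $s_0\geq 0$ with $\eta(f(s_0))\leq 1/e$ exists in the first place.
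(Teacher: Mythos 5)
Your argument is correct and runs the same Ko\l odziej-style iteration as the paper: define $s_{j+1}$ as the first passage of $f$ below $e^{-1}f(s_j)$, bound the step length by $\tau_j \le e\,\eta(f(s_j)) \le 1$ via \eqref{eq:Etadecay}, and sum by comparing $\sum_j \eta(e^{-j}f(s_0))$ to the Dini integral over the tiling $(0, ef(s_0)] = \bigcup_j (e^{-j}f(s_0), e^{-j+1}f(s_0)]$. One remark: the bookkeeping you gesture at actually yields $s_\infty \le s_0 + e\int_0^{ef(s_0)}\eta(t)\,t^{-1}\,dt$, with a prefactor $e$ on the integral, and this is precisely what the paper's own proof obtains (the proof in fact redefines $S_\infty$ with this extra factor $e$, which is missing from the displayed $S_\infty$ in the lemma's statement) — so your computation is faithful to the argument, and the mismatch is a typo internal to the paper rather than a gap on your side.
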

Observe that the Dini condition implies that $\lim_{t \to 0^+} \eta (t) = 0$. 
This Lemma is a reformulation of a Lemma of Kolodziej \cite{Kol05} in the spirit of  \cite{EGZ09} and \cite{BGZ08}). Its proof is a variant of the proof of \cite[Lemma 2.4]{EGZ09}. We will give it here for the convenience of the reader.
\begin{proof} 
Since $\lim_{s \to + \infty} f(s) = 0$, there exits $s_0 > 0$ such that $f(s_0) < 1 \slash e$. If $f (s_0) = 0$ we are done. If $f (s_0) > 0$, there exists $s > s_0$ such $f (s)) <   f (s_0) \slash e$ since $\lim_{s \to  + \infty} f (s)  = 0$. Therefore we can set  
$$
s_1 :=  \inf \{ s > s_0  \, \, ; \, \,  f (s) <  f (s_0) \slash e \}.
$$ 
By (\ref{eq:Etadecay}) we have
$$
f (s_0 + 1) \leq f (s_0) \eta (f(s_0)) < f (s_0) \slash e,
$$
which implies that $s_0 < s_1 \leq s_0 + 1$.

By definition of $s_1$,  there exists a sequence $s'_k$ decreasing to $s_1$ such that $ f (s'_k) <  f (s_0) \slash e$ for any $k > 0$. Since $f$ is right continuous, it follows that $f (s_1) = \lim_{k \to + \infty} f (s'_k) \leq f (s_0) \slash e$.

Thus we have proved that $s_0 < s_1 \leq s_0 + 1$ and $f (s_1) \leq f (s_0) \slash e$.

 We will construct  by induction an increasing  sequence $(s_j)_{j \geq 0}$ of positive numbers such that or any $j \in \N$
 $$
 s_j < s_{j + 1} \leq s_j + 1 \, \, \, \hbox{and} \, \, \, f (s_{j + 1} )\leq f (s_j) \slash e
 $$
Indeed assume by induction that for a fixed $j \geq 1$,   $s_1, \cdots, s_j$ are constructed with the required properties . Then the number 
$$
s_{j + 1} :=  \inf \{ s > s_j \, \, ; \, \,  f (s) <  f (s_j) \slash e \}.
$$
is well defined and by the same reasoning for $s_1$ we see that it satisfies the required properties. 

On the other hand, since $f (s_{j + 1} )\leq f (s_j) \slash e$, from (\ref{eq:Etadecay}), it follows that for any $s \in ]s_j , s_{j +1}[$ we have
$$
 (s - s_j) f (s) \leq f (s_{j}) \eta (f (s_{j})) \leq  e f (s)  \eta (f (s_{j})),
$$
since $ s_j < s < s_{j+1}$ and then $f (s) \geq f (s_{j}) \slash e$.

Therefore for any $j \in \N$ and $s \in ]s_j, s_{j + 1}[$, $ s-s_j \leq e  \eta (f (s_{j}))$, hence for any $j \in \N$,

$$
s_{j + 1} - s_j \leq e  \eta (f (s_{j}))
$$
Moreover since $f (s_{j} )\leq f (s_{j-1}) \slash e$ for any $j\geq 1$, it follows that 
$f (s_{j } )\leq f (s_0) \slash e^j$ and then 
$$
s_{j + 1} - s_j \leq  e {\eta} (f (s_0)  e^{-j}),
$$
for all $j \in \N$ since $\eta$ is non decreasing.

Therefore
\begin{eqnarray*}
s_{\infty} := \lim_{j \to + \infty} s_j = s_0 + \sum_{j \geq 0} (s_{j + 1} - s_j) & \leq & s_0 + e \sum_{j =0}^{+ \infty} \eta (f (s_0)  e^{-j}) \\
&\leq & s_0 +  e \int_0^{+\infty} {\eta} \left(f (s_0)  e^{- x+ 1}\right)  d x.
\end{eqnarray*}
 A simple change of variables yields
 $$
 s_{\infty} \leq s_0 + e \int_0^{e f (s_0)} \frac{\eta (t)}{t} d t.
 $$ 
 Recall that by construction  $s_j \leq s_{\infty}$ and $f (s_{j + 1} )\leq f (s_0) \slash e^j$  for any $j \in \N$. Therefore for any $j \in \N$

$$ 
0 \leq f (s_{\infty})  \leq f (s_{j + 1} )\leq f (s_0) \slash e^j,
$$ 
which implies that  that $f (s_{\infty}) = 0$. 

Therefore if we define
$$
S_{\infty} := s_0 + e \int_0^{e f (s_0)}  \frac{\eta (t)}{t} d t,
$$
 we conclude that $f (s) = 0$ for any $s \geq S_{\infty}$.
 
 Finally observe that, since  $\eta (f (s_0)) \leq 1\slash e < 1$, we have $f (s_0) \leq \eta^{-1} (1\slash e)$.
 \end{proof}

\begin{remark} 1. Observe that if $ \eta (f (0)) \leq 1 \slash e$ then $ f (s) = 0$ for any $s \geq  S_{\infty}$, where
$$
S_{\infty} := e \int_0^{e f (0)}  \frac{\eta (t)}{t} d t.
$$

2. If $\eta$ does not have the monotonicity property, we can replace in the statement of the lemma $\eta$ by the least non decreasing majorant function of $\eta$ define by
$$
\bar{\eta} (t) := \sup \{\eta (s) \geq  0 ; s \leq t\} , \, t \geq 0.
$$
\end{remark}

\smallskip
We now deduce a uniform a priori estimate on solutions to  complex Hessian equations. 

\begin{lemma} \label{lem:CapEst} Let $u, v \in  \mathcal{SH}_m (\Omega) \cap L^{\infty} (\Omega)$ be such that 
$$
 \liminf_{z\to \partial\Omega}(u-v)(z)\geq 0.
$$
 Then for any $t > 0, s > 0$, we have

 \begin{equation}\label{eq:CapEst}
 t^m {Cap}_m(\{u< v -s-t\},\Omega)\leq \int_{\{u<v -s\}}(dd^cu)^m\wedge\beta^{n-m}.
\end{equation}
\end{lemma}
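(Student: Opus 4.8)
The plan is to reduce the statement to an application of the comparison principle (Proposition \ref{prop:Comparison Principle}) by exhibiting a suitable comparison competitor between the two level sets. First I would fix $t>0$ and $s>0$ and abbreviate $U_s := \{u < v - s\}$ and $U_{s+t} := \{u < v - s - t\}$, noting the obvious inclusion $U_{s+t} \subset U_s$. The key idea is that on the smaller set $U_{s+t}$ one can slip in a scaled relative extremal function: if $w \in \mathcal{SH}_m(\Omega)$ with $-1 \le w \le 0$, then the function $h := v - s + t\,w$ is $m$-subharmonic, satisfies $h \le v - s$ on $\Omega$, and on the open set where $u < v - s - t$ we have $h \le v - s + t\cdot 0 = v-s$ but more to the point $h > v - s - t \ge u$... so actually I would compare $u$ against $h$ directly.

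More precisely, the plan is as follows. On $\Omega$ consider the comparison pair $u$ and $h := v - s + t w$ where $w \in \mathcal{SH}_m(\Omega)$, $-1 \le w \le 0$. Near $\partial\Omega$ we have $\liminf_{z\to\partial\Omega}(u - h)(z) = \liminf(u-v)(z) + s - t w \ge s > 0$ using the hypothesis, so the boundary condition of the comparison principle holds for $u$ and $h$. Now I claim $U_{s+t} \subset \{u < h\}$: indeed if $u(z) < v(z) - s - t$, then since $w(z) \ge -1$ we get $h(z) = v(z) - s + t w(z) \ge v(z) - s - t > u(z)$. Applying the comparison principle to the pair $(u,h)$ gives
\begin{equation*}
\int_{\{u < h\}} (dd^c h)^m \wedge \beta^{n-m} \le \int_{\{u<h\}} (dd^c u)^m \wedge \beta^{n-m}.
\end{equation*}
On the left, restrict the integration to the smaller set $U_{s+t}$ and use $(dd^c h)^m \wedge \beta^{n-m} = (dd^c(v + tw))^m \wedge \beta^{n-m} \ge t^m (dd^c w)^m \wedge \beta^{n-m}$, which holds because $v + tw$ is a sum of $m$-subharmonic functions and the mixed terms are nonnegative currents (monotonicity of the Hessian operator on $\mathcal{SH}_m \cap L^\infty$). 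On the right, enlarge $\{u < h\}$: since $h \le v - s$ everywhere (as $w \le 0$), we have $\{u < h\} \subset \{u < v - s\} = U_s$, so the right side is at most $\int_{U_s}(dd^c u)^m \wedge \beta^{n-m}$.

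Combining, for every admissible $w$ we obtain $t^m \int_{U_{s+t}} (dd^c w)^m \wedge \beta^{n-m} \le \int_{U_s} (dd^c u)^m \wedge \beta^{n-m}$. Taking the supremum over all $w \in \mathcal{SH}_m(\Omega)$ with $-1 \le w \le 0$ and using the definition of $\text{c}_m(\cdot,\Omega)$ as an inner capacity (first for compact $K \subset U_{s+t}$, then passing to the supremum over such $K$, which recovers $\text{c}_m(U_{s+t},\Omega)$ since $U_{s+t}$ is open, hence capacitable with inner capacity equal to outer capacity) yields the desired inequality
\begin{equation*}
t^m\, \text{c}_m(\{u < v - s - t\},\Omega) \le \int_{\{u < v - s\}} (dd^c u)^m \wedge \beta^{n-m}.
\end{equation*}
The main obstacle I anticipate is the bookkeeping around the sets of measure/capacity zero and the precise justification that one may pass from the compact sets appearing in the definition of $\text{c}_m$ to the open set $U_{s+t}$: one should be slightly careful that the comparison principle is applied on the open set $\{u<h\}$ and that the current inequality $(dd^c h)^m\wedge\beta^{n-m} \ge t^m (dd^c w)^m\wedge\beta^{n-m}$ is valid as measures (this is where boundedness of $u,v,w$ and the Bedford--Taylor type monotonicity from Section 2 are used). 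Everything else is routine manipulation.
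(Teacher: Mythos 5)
Your proof is correct and is exactly the standard comparison-principle argument that the paper itself only cites (the paper states the lemma without proof, referring to Ko\l odziej, Eyssidieux--Guedj--Zeriahi, etc.). Your competitor $h = v - s + tw$ with $w\in\mathcal{SH}_m(\Omega)$, $-1\le w\le 0$, the boundary check $\liminf(u-h)\ge s>0$, the inclusions $\{u<v-s-t\}\subset\{u<h\}\subset\{u<v-s\}$, and the monotonicity $(dd^c h)^m\wedge\beta^{n-m}\ge t^m(dd^c w)^m\wedge\beta^{n-m}$ (from the multinomial expansion with positive mixed terms) are all used correctly, and taking the supremum first over admissible $w$ and then over compacts $K\subset\{u<v-s-t\}$ recovers the capacity on the left. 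One minor slip: you claim that $\{u<v-s-t\}$ is open, but $v$ is only upper semicontinuous, so $u-v$ is not u.s.c.\ and this set need not be open. This does not affect the argument, since $\text{c}_m(\cdot,\Omega)$ is by definition the inner capacity on arbitrary subsets of $\Omega$, so the supremum over compacts $K\subset\{u<v-s-t\}$ is precisely the quantity in the statement; no appeal to capacitability or to equality of inner and outer capacity is needed.
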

The  lemma is well known. It follows from the comparison principle (see \cite{Kol96}, \cite{EGZ09}, \cite{GKZ08}, \cite{Ch16a}, \cite{KN20a}).

\begin{corollary} \label{cor:UnifEst} Let  $\mu$ be a positive Borel measure on $\Omega$ with finite mass.
Assume that $\mu$ is $\Gamma$-diffuse  with respect to  the $m$-Hessian capacity and the function  $ \gamma (t) := \Gamma (t) \slash t$ is non-decreasing and
satisfies the following Dini type condition
\begin{equation} \label{eq:DiniConditionMu}
\int_{0^+}  \frac{\gamma^{1 \slash m} (t)}{t} d t < + \infty.
\end{equation}
Assume that $u, v  \in \mathcal{SH}_m (\Omega) \cap L^{\infty} (\Omega)$ satisfy   $\liminf_{z \to \partial \Omega} (u - v) \geq 0$ and $(dd^c u) ^m\wedge \beta^{n-m}  \leq \mu$ on $\Omega$.  Then we have the following uniform estimate on $\Omega$ 

\begin{equation} \label{eq:UnifEst} 
 \sup_{\Omega} (v - u) \leq   2  e \left(\mu (\Omega)\slash a\right)^{1 \slash m}  + e \int_0^{a} \frac{\gamma^{1 \slash m} (t)}{t} d t,
 \end{equation}
  where
  $$
   a := e^m \gamma^{-1} (1\slash e^m).
  $$
  
In particular  if $g \in C^0(\partial \Omega)$ and $\lim_{z \to \zeta} u (z) = g (\zeta)$ for any $\zeta \in \partial \Omega$, then 
 \begin{equation} \label{eq:UnifOsc} 
 \mathrm{osc}_{\bar \Omega} u \leq \mathrm{osc}_{\partial  \Omega} g +2  e \left(\mu (\Omega)\slash a\right)^{1 \slash m}  + e \int_0^{a }\frac{\gamma^{1 \slash m} (t)}{t} d t.
 \end{equation}
\end{corollary}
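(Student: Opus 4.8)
The plan is to apply Lemma~\ref{lem:Kolo} to an appropriately chosen decreasing function $f$, following the now-classical Ko\l odziej/EGZ scheme. The upper bound $u \le \max_{\partial\Omega} g$ is immediate from the comparison principle, since the constant $\max_{\partial\Omega} g$ is $m$-subharmonic with zero Hessian measure and dominates $u$ on $\partial\Omega$. For the lower bound, set $v := \min_{\partial\Omega} g$, so $v$ is a constant and $\liminf_{z\to\partial\Omega}(u-v)(z) \ge 0$; we want to control how far below $v$ the function $u$ can go. Define
$$
f(s) := \left[\mathrm{c}_m\big(\{u < v - s\},\Omega\big)\right]^{1/m}, \qquad s \ge 0.
$$
This $f$ is non-increasing, and since $\{u < v-s\}$ shrinks to the empty set (or a set of zero capacity) as $s\to\infty$ because $u$ is bounded, we have $\lim_{s\to\infty} f(s) = 0$; right-continuity in $s$ follows from the outer-regularity properties of $\mathrm{c}_m$ applied to the decreasing family of open sets (after the usual care, replacing $\{u<v-s\}$ by its capacitary properties — one can argue via $u$ being quasi-continuous).

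Next I would combine Lemma~\ref{lem:CapEst} with the $\Gamma$-diffuse hypothesis. By \eqref{eq:CapEst}, for $t \in (0,1]$ and $s>0$,
$$
t^m\, \mathrm{c}_m\big(\{u < v - s - t\},\Omega\big) \le \int_{\{u < v-s\}} (dd^c u)^m \wedge \beta^{n-m} = \mu\big(\{u < v-s\}\big),
$$
using that $u$ solves $(dd^c u)^m\wedge\beta^{n-m} = \mu$. Since $\mu$ is $\Gamma$-diffuse, $\mu(\{u<v-s\}) \le \Gamma\big(\mathrm{c}_m(\{u<v-s\})\big) = \mathrm{c}_m(\{u<v-s\})\,\gamma\big(\mathrm{c}_m(\{u<v-s\})\big)$ whenever $\mathrm{c}_m(\{u<v-s\}) \le 1$, which holds for $s$ beyond some $s_0$ (again because $\mathrm{c}_m(\{u<v-s\})\to 0$). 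Raising to the power $1/m$ and writing everything in terms of $f$, and defining $\eta(x) := \gamma(x^m)^{1/m}$, one gets
$$
t\, f(s+t) \le f(s)\,\eta\big(f(s)\big),
$$
so the hypothesis \eqref{eq:Etadecay} of Lemma~\ref{lem:Kolo} holds (a harmless reparametrization handles the shift by the fixed $t$ inside $f(s+t)$ versus $f(s+s_0+t)$). The Dini condition \eqref{eq:DiniConditionMu} on $\gamma^{1/m}$ translates, via the substitution $x = t^{1/m}$ (so $dt/t = m\,dx/x$), exactly into $\int_{0^+}\eta(x)/x\,dx < \infty$, the Dini condition \eqref{eq:DC0} required by Lemma~\ref{lem:Kolo}.

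Applying Lemma~\ref{lem:Kolo} then yields $f(S_\infty) = 0$, i.e. $\mathrm{c}_m(\{u < v - S_\infty\},\Omega) = 0$, hence $u \ge v - S_\infty$ quasi-everywhere and therefore everywhere on $\Omega$ (as $u$ is $m$-subharmonic and the set in question has zero capacity, hence is pluripolar and carries no mass for $(dd^c u)^m\wedge\beta^{n-m}$; alternatively $u \ge v - S_\infty$ on a dense set and by upper semicontinuity we get a slightly weaker but sufficient bound, which one sharpens by the standard argument). The remaining work is bookkeeping: one must identify $s_0$ from the condition $\eta(f(s_0)) \le 1/e$, i.e. $\gamma(f(s_0)^m) \le 1/e^m$, and estimate $f(s_0)$ crudely. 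Here the elementary capacity bound $\mathrm{c}_m(\{u<v-s\}) \le s^{-m}\|u-v\|_{L^\infty}^m \cdot(\text{const})$ is too lossy near the correct constant; instead I would use Lemma~\ref{lem:CapEst} once more at scale $t = s/2$ to get $f(s_0)^m \lesssim \mu(\Omega)/s_0^m$, which after optimizing in $s_0$ and plugging into $S_\infty = s_0 + e\int_0^{e f(s_0)}\eta(t)/t\,dt$ and converting back to the $\gamma$-variable produces the stated bound with the $2e(\mu(\Omega)/a)^{1/m}$ term and the integral $\int_0^a \gamma^{1/m}(t)/t\,dt$, where $a = e^m\gamma^{-1}(1/e^m)$ is exactly the threshold value of the capacity at which $\gamma$ reaches $1/e^m$. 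The estimate \eqref{eq:UnifOsc} on the oscillation is then immediate by subtracting the lower bound from the upper bound and using $\mathrm{osc}_{\partial\Omega} g = \max_{\partial\Omega} g - \min_{\partial\Omega} g$.

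The main obstacle, I expect, is not the structure of the argument — which is routine once one has Lemma~\ref{lem:Kolo} and Lemma~\ref{lem:CapEst} — but getting the \emph{explicit constants} to come out exactly as stated: carefully tracking the factor $e$ through the iteration, pinning down $s_0$ via an honest (not merely qualitative) estimate of $f(s_0)$ in terms of $\mu(\Omega)$, and performing the change of variables $x = t^{1/m}$ cleanly so that the Dini integral $\int_0^a \gamma^{1/m}(t)/t\,dt$ appears with the right upper limit $a = e^m\gamma^{-1}(1/e^m)$ rather than some comparable but different constant.
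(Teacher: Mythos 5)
Your proposal follows essentially the same route as the paper's proof: define $f(s) := \bigl[\text{c}_m(\{u < \min_{\partial\Omega} g - s\},\Omega)\bigr]^{1/m}$, derive $t\,f(s+t)\le f(s)\,\eta(f(s))$ with $\eta(x) := \gamma(x^m)^{1/m}$ from Lemma~\ref{lem:CapEst} combined with the $\Gamma$-diffuse hypothesis, apply Lemma~\ref{lem:Kolo}, and fix $s_0$ by a second application of Lemma~\ref{lem:CapEst} (with $s=t$) to obtain $f(s_0)^m \lesssim \mu(\Omega)/s_0^m$, which after optimization gives $s_0 = 2e(\mu(\Omega)/a)^{1/m}$. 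Your extra care about right-continuity of $f$ and the restriction $\text{c}_m \le 1$ in Definition~\ref{def:cap-domination} is harmless and if anything slightly more meticulous than the paper's write-up.
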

\begin{proof} Set   $\liminf_{z \to \partial \Omega} (u - v) \geq 0$.
Then we can apply Lemma \ref{lem:CapEst} and get the following estimate for any $s, t > 0$
\begin{eqnarray} \label{eq:CapEst2}
t^m \hbox{c}_m  (\{u - v <  - s - t\},\Omega)  &\leq & \int_{\{u - v < - s\}} (dd^c u)^m \wedge \beta^{n-m}  \nonumber \\
&  \leq & \mu (\{u - v < - s\} ).
\end{eqnarray}

 By definition of $\Gamma$, we deduce that for any $s, t > 0$
\begin{eqnarray*}
t^m\hbox{c}_m  (\{u - v <  - s - t\},\Omega)\, \, \, \leq  \, \, \, \Gamma  (\hbox{c}_m  (\{u - v <  - s \},\Omega)). 
\end{eqnarray*}
 Define $f (s) :=\hbox{c}_m  (\{u - v <  - s \},\Omega)^{1 \slash m}$ for $s > 0$. Then we see that the condition of the Lemma \ref{lem:Kolo} is satified with  $\eta (t) := \gamma^{1 \slash m} (t^m)$.

 Applying  Lemma \ref{lem:Kolo} we conclude that
 $ f (s) = 0 $ for $s \geq S_{\infty}$. This means that $u \geq  v - S_{\infty}$ outside a set of zero capacity. Since such set is of Lebesgue measure zero, it follows that $u \geq  v - S_{\infty}$,
  where
  $$
   S_{\infty} \leq  s_0 + \int_0^{e \eta^{-1} (1\slash e)}   \frac{\eta (t)}{t} d t  =    s_0 + (1 \slash m) \int_0^{a} \frac{\gamma^{1 \slash m} (t)}{t} d t,
 $$
 and
$$
 a :=  \left[e \eta^{-1} (1\slash e)\right]^m = e^m \gamma^{-1} (1\slash e^m).
 $$ 
 We need to establish a uniform estimate on the initial time $s_0$. 
 Recall that $s_0$ satisfies $\eta (f (s_0)) \leq 1 \slash e.$ This condition is equivalent to the following one   
 $$
\hbox{c}_m  (\{u <  v   - s_0\},\Omega)  \leq a e^{- m}. 
 $$
 Observe that by (\ref{eq:CapEst}) we have for $t > 0$
 $$
 \hbox{c}_m  (\{u -  v <  - 2 t\})  \leq \mu (\Omega) \slash t^m
 $$
 Then choosing $t_0 =  e \left(\mu (\Omega)\slash a\right)^{1 \slash m}$ and  $s_0 := 2 t_0 $, we obtain the estimate  $f (s_0) \leq a$ and then 
 $$
 S_{\infty} \leq   2  e \left(\mu (\Omega)\slash a\right)^{1 \slash m}  + \int_0^{a} \frac{\gamma^{1 \slash m} (t)}{t} d t.
  $$
 Therefore from this upper bound and (\ref{eq:Ubound}),  we obtain the uniform estimate (\ref{eq:UnifEst}).

 On the other hand by the classical maximum principle we also have $u \leq M = M_{g} := \max_{\partial \Omega} g$ in $\Omega$.
 
 Therefore applying the previous estimate with $v := \min_{\partial \Omega } g$, we obtain the following uniform bound on $u$ in $\Omega$
 \begin{equation} \label{eq:Ubound}
  \min_{\partial \Omega} g - S_{\infty} \leq u \leq \max_{\partial \Omega} g,
 \end{equation}
 which proves the estimate (\ref{eq:UnifOsc}).
 \end{proof}

\subsection{Existence of a continuous solution : Proof of Theorem 1}

We first prove a weak stability theorem in terms of capacity in the spirit of  a similar result of \cite{EGZ09}.

 \begin{lemma}\label{lem:CapacityStability}  
Let  $\mu$ be a positive Borel measure with finite mass on $\Omega$. Assume that $\mu$ is $\Gamma$-diffuse with respect to the $m$-Hessian capacity and the function  $ \gamma (t) := \Gamma (t) \slash t$ is non-decreasing and satisfies the Dini type condition (\ref{eq:DiniConditionMu}).  

Then  there exists a  uniform constant $B = B (m,\gamma,\mu(\Omega)) > 0$  such that for any $\varepsilon > 0$, and any $u, v \in \mathcal{SH}_m(\Omega)$ such that 
$$
 \liminf_{z\to\partial\Omega}(u-v)(z)\geq 0 \, \, \, { and} \, \, \, \, (dd^c u)^m \wedge \beta^{n-m} \leq \mu,
$$
 in the sense of currents on $\Omega$,  we have
$$
\sup_{\Omega}(v-u)_+ \leq\varepsilon+ B \int_0^{ \varsigma (\varepsilon)} \frac{\gamma^{1 \slash m}  (t)}{t} d t,
$$
where
$$
\varsigma (\varepsilon) := e^m  \left[\mathrm{c} _{m} (\{v - u>\varepsilon\}, \Omega)\right].
$$
\end{lemma}

\begin{proof} 
 Fix  $\varepsilon > 0$ and apply Lemma \ref{lem:CapEst} with $t \in [0,1] $ and $s + \varepsilon$. Then  we obtain 
\begin{equation}\label{equa}
t^m \mathrm{c} _{m} (\{u-v<-\varepsilon-t-s\},\Omega)\leq \mu \left(\{u-v<-\varepsilon-s\}\right).
\end{equation}
 Set $ f (s) := \left[\mathrm{c} _{m} (\{u-v<-\varepsilon-s\},\Omega) )\right]^{1 \slash m}$ for $s \in \R^+$.
 Then by the domination condition  we deduce that for $t \in [0,1]$ and $s \in \R^+$
$$
t f (s + t) \leq f (s)  \gamma^{1 \slash m} (f (s)^m)
.$$
Now we can apply Lemma \ref{lem:Kolo} with $\eta (t) :=  \gamma^{1 \slash m} (t^m)$. 

There are two cases to be considered:

1) If $\eta (f (0)) \leq 1 \slash e$, then  by   Lemma \ref{lem:Kolo} we conclude that $ \mathrm{c} _{m} (\{u-v<-\varepsilon-s\},\Omega) = 0$ if $s \geq S_{\infty}$ i.e. 
$$
\sup_{\Omega} (v-u) \leq  \varepsilon + S_{\infty} = \varepsilon + \int_0^{e f (0)} \frac{\eta (t)}{t} d t.
$$
A simple change of variables leads to the estimates.
\begin{equation} \label{eq:estimation1}
\sup_{\Omega} (v-u) \leq  \varepsilon + (1 \slash m) \int_0^{\varsigma  (\varepsilon)} \frac{\gamma^{1 \slash m} (t)}{t} d t,
\end{equation}
where $\varsigma  (\varepsilon) = e^m f(0)^m = e^m  \left[\mathrm{c} _{m} (\{v - u>\varepsilon\}, \Omega)\right]$.

2) If  $\eta (f (0)) > 1 \slash e$, then 
$$ \varsigma  (\varepsilon) := e^m  \left[\mathrm{c} _{m} (\{v - u>\varepsilon\}, \Omega)\right] = e^m f (0)^m  \geq   e^m \left[\eta^{-1} (1 \slash e)\right]^m =: a.
$$
  Hence
$$
\int_0^{\varsigma  (\varepsilon)} \frac{\gamma^{1 \slash m} (t)}{t} d t \geq A := \int_0^{ a} \frac{\gamma^{1 \slash m} (t)}{t} d t.
$$

On the other hand since $u \geq v$ in $\partial \Omega$, by the uniform estimate, we have 
$v - u \leq \max_{\partial \Omega} u - u \leq \mathrm{osc}_{\Omega} u \leq M  = M (\gamma, \mu(\Omega)),$ hence if we let $B_0:= M  A^{-1}$ we get

\begin{equation} \label{eq:estimation2}
\sup_{\Omega} (v-u) \leq  B_0 \int_0^{\varsigma  (\varepsilon)} \frac{\gamma^{1 \slash m} (t)}{t} d t 
\end{equation}
Comparing the estimates (\ref{eq:estimation1}) and (\ref{eq:estimation2}) we obtain the estimate required in the theorem with  
$B := \max \{B_0,1 \slash m\}$.
\end{proof}

We prove Theorem 1 on the existence of a continuous solution to the Dirichlet problem for diffuse measures satisfying the Dini condition (\ref{eq:DiniConditionMu}).

\smallskip
\smallskip

{\it Proof of Theorem 1}. The proof will be done in three steps.

1) {\it Existence of a bounded solution.} Indeed, since $\mu$ is diffuse with respect to  $c_m$, it follows from the generalized Radon-Nikodym theorem that there exits a function $v \in \mathcal{SH}_m (\Omega) \cap L^{\infty} (\Omega)$, $v < 0$ and $F \in L^1 (\Omega,\sigma_m(v))$ such that $\mu = F \sigma_m (v)$ on $\Omega$ ( see  \cite{Ceg98}, \cite{Lu12}).

  Let $(K_j)$ be an exhaustive sequence of compact sets in $\Omega$. Set for each $j \in \N$, $F_j := {\bf 1}_{K_j} \min \{F,j^m\}$  for $j \in \N$. Then $\mu_j := F_j \sigma_m (v)$ is a Borel measure with compact support such that $\mu_j  \leq j^m \sigma_m (v) = \sigma_m (v_j)$ with $ v_j := j v  \in  \mathcal{SH}_m (\Omega) \cap L^{\infty} (\Omega)$ and $v_j < 0$. We can always modify $v_j$ near the boundary to construct a new function $\tilde v_j$ such that $\tilde v_j = v_j$ in a neighborhood of $K_j$ and $ \tilde v_j = 0$ near the boundary $\partial \Omega$.
  
  Let $U_g$ be the maximal $m$-subharmonic function in $\Omega$, continuous in $\bar \Omega$  such that $U_g = g $ in $\partial \Omega$. Then the function $w_j := \tilde v_j + U_g  \in \mathcal{SH}_m (\Omega) \cap L^{\infty} (\Omega)$ satisfies $ \mu_j \leq \sigma_m (w_j)$ on $\Omega$ and $w_j = g$ on $\partial \Omega$  in the sense that $\lim_{z \to \zeta} w_j(z) = g(\zeta)$ for any $\zeta \in \partial \Omega$. By the bounded subsolution theorem \cite[Theorem 2.2]{N13}, there exists $u_j \in \mathcal{SH}_m (\Omega) \cap L^{\infty} (\Omega)$ such that  $\sigma_m (u_j) = \mu_j = F_j \sigma_m (v)$ and  $\lim_{z \to \zeta} u_j(z) = g(\zeta)$ for any $\zeta \in \partial \Omega$.
By the comparison principle, since $(\mu_j)$ is increasing, the sequence $(u_j)_{j \in \N}$ is decreasing and by Corollary \ref{cor:UnifEst}, the sequence $(u_j)$ is uniformly bounded on $\Omega$. Therefore it converges to $u \in  \mathcal{SH}_m (\Omega) \cap L^{\infty} (\Omega)$. By the continuity of the Hessian operator  with respect to  decreasing sequences, it follows that $\sigma_m (u) = \mu$ weakly on $\Omega$.

\smallskip

2) {\it Boundary values of the solution.} Since $\sigma_m (u_j) \leq \sigma_m (u_k) \leq \mu$ for $k \geq j$, it follows that the measures $\mu_j$ are uniformly $\Gamma$-diffuse  with respect to  $c_m$ and then by  Lemma \ref{lem:CapacityStability}, there is a uniform constant $B$ such that for any $ j$, 
\begin{equation} \label{eq:UnifEst1}
\sup_{\Omega}(u_j-u) \leq\varepsilon+ B \int_0^{ \varsigma_{j} (\varepsilon)} \frac{\gamma^{1 \slash m}  (t)}{t} d t,
\end{equation}
where
$$
\varsigma_{j} (\varepsilon) := e^m  \, \mathrm{c} _{m} (\{u_j - u>\varepsilon\}).
$$

Observe that since $\lim_{j \to + \infty} \varsigma_{j} (\varepsilon) = 0$. Indeed  applying  Lemma \ref{lem:CapEst} with $s = t = \varepsilon \slash 2$, we have for any $j \in \N$

 \begin{eqnarray*} \label{eq:CapEst2}
  {Cap}_m(\{u_j - u >  \varepsilon \},\Omega) & \leq  & t^{-m}  \int_{\{u_j - u > t \}}(dd^cu)^m\wedge\beta^{n-m} \\
 & \leq & t^{- m - 1} \int_{\Omega} (u_j - u) (dd^c u)^m\wedge\beta^{n-m},
\end{eqnarray*}
and the right hand side converges to $0$ by the monotone convergence theorem. Therefore by (\ref{eq:UnifEst1}), it follows that   the sequence $(u_j)$ converges uniformly to $u$ in $\Omega$. Hence $u = g$ in $\partial \Omega$ in the sense that  $\lim_{z \to \zeta}  u (z) = g (\zeta)$ for any $\zeta  \in \partial \Omega$.

\smallskip

3) {\it Continuity of the solution $u$.} By Lemma \ref{lem:appximationwithbdv}, there exists  a decreasing sequence  $(w_j)$ of continuous $m$-subharmonic functions in $\bar{\Omega}$ which converges to $u$ pointwise in $\Omega$ and such that $w_j = g$ on $\partial \Omega$. 

Fix $\e > 0$.  Since   $\lim_{z \to \zeta} (u (z)- w_j (z)) =  0$ for any $\zeta \in \partial \Omega$ and  $j \in \N$, applying Lemma \ref{lem:CapacityStability} with $u $ and $v = w_j $, we deduce   that for any $j \in \N$
$$
 \sup_{\Omega} (w_j - u) \leq \e + B \int_0^{\varsigma _j (\e)} \frac{\gamma^{1 \slash m} (t)}{t} d t,
$$

where
$$
\varsigma _j (\e) = e^m \, \mathrm{c}_m (\{ w_j > u + \e\}).
$$
Since $(w_j)$ decreases to $u$ pointwise in $\Omega$, it follows as before that it converges in capacity over any compact subset of $\Omega$. Observe that for any $j \in \N$, we have $\{u < w_j - \e\} \subset \{u < w_0 - \e\}  \Subset \Omega$. Hence $\lim_{j \to + \infty} \varsigma_j (\e) = 0$, and then
$$
\lim_{j \to + \infty} \max_{\bar \Omega}(w_j - u) \leq \e.
$$
As $\e > 0$ is arbitrary, it follows that the sequence $(w_j)$ converges uniformly in $\bar{\Omega}$ to $u$, hence $u$ is continuous in $\bar \Omega$.  This finishes the proof of Theorem 1.
\smallskip
\smallskip

In the case of infinite mass, we can prove the following result using Theorem 1.
\begin{corollary} \label{cor:continuoussolution} Let  $\mu$ be a positive Borel measure on $\Omega$ with $\mu(\Omega) = + \infty$. Assume that the following two conditions are satisfied

$(i)$  $\mu$  is $\Gamma$-diffuse  with respect to the $m$-Hessian capacity with $\Gamma$  satisfying the  Dini type condition (\ref{eq:DiniConditionMu}), 

$(ii)$  the Dirichlet problem (\ref{eq:DirPb}) admits a bounded subsolution $\psi \in \mathcal{SH}_m(\Omega) \cap L^{\infty} (\Omega)$ i.e.  $(dd^c \psi)^m \wedge \beta^{n-m} \geq \mu$ weakly in $\Omega$ and $\psi = 0$ on $\partial \Omega$.

 Then   for any    continuous boundary datum $g \in \mathcal C^0 (\partial \Omega)$, the Dirichlet problem (\ref{eq:DirPb}) admits a unique solution  $U = U_{\mu,g} \in \mathcal{SH}_m (\Omega) \cap \mathcal C^0  (\bar{\Omega})$.
\end{corollary}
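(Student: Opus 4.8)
The plan is a monotone exhaustion of $\mu$ from below by finite‑mass measures, combined with Theorem~1 and the bounded subsolution $\psi$ to force uniform estimates. Since $\mu$ is dominated by the (locally finite) Radon measure $(dd^c\psi)^m\wedge\beta^{n-m}$, it is itself locally finite; so I would fix compact sets $K_1\subset K_2\subset\cdots$ exhausting $\Omega$ and set $\mu_j:=\mathbf{1}_{K_j}\mu$. Each $\mu_j$ has finite mass, $\mu_j\uparrow\mu$, and the inequality $\mu_j\le\mu$ makes every $\mu_j$ $\Gamma$-diffuse with the same $\Gamma$, hence still subject to the Dini condition (\ref{eq:DiniConditionMu}). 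Thus Theorem~1 applies to each pair $(\mu_j,g)$ and produces $U_j\in\mathcal{SH}_m(\Omega)\cap\mathcal{C}^0(\bar\Omega)$ with $(dd^cU_j)^m\wedge\beta^{n-m}=\mu_j$ and $U_j=g$ on $\partial\Omega$.

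Next I would extract the limit together with uniform control. Because $\mu_{j+1}\ge\mu_j$ and the $U_j$ share the boundary datum, the comparison principle (Proposition~\ref{prop:Comparison Principle}) gives $U_{j+1}\le U_j$. Let $h_g\in\mathcal{SH}_m(\Omega)\cap\mathcal{C}^0(\bar\Omega)$ be the (Perron) solution of the Dirichlet problem (\ref{eq:DirPb}) with datum $g$ and zero right‑hand side, so $(dd^ch_g)^m\wedge\beta^{n-m}=0$. Comparison gives $U_j\le h_g$; and since $\psi+h_g$ is a bounded $m$-subharmonic function with boundary values $g$ obeying $(dd^c(\psi+h_g))^m\wedge\beta^{n-m}\ge(dd^c\psi)^m\wedge\beta^{n-m}\ge\mu\ge\mu_j$, comparison also gives $\psi+h_g\le U_j$. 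Hence $\psi+h_g\le U_j\le h_g$ uniformly in $j$, so $U:=\lim_jU_j$ is a bounded $m$-subharmonic function squeezed between $\psi+h_g$ and $h_g$; in particular $U=g$ on $\partial\Omega$, and by continuity of the Hessian operator along decreasing sequences of uniformly bounded $m$-subharmonic functions, $(dd^cU)^m\wedge\beta^{n-m}=\lim_j\mu_j=\mu$ weakly on $\Omega$. Uniqueness is immediate from the comparison principle.

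The remaining and main point is continuity of $U$ on $\bar\Omega$, and for this the plan is to show $U_j\downarrow U$ uniformly on $\bar\Omega$, reproducing Step~3 of the proof of Theorem~1 but applying the stability estimate to the truncations (which have finite mass) rather than to $\mu$ itself. For $k\ge j$ I would apply Lemma~\ref{lem:CapacityStability} to $u=U_k$, $v=U_j$ — legitimate since $\sigma_m(U_k)=\mu_k$ is finite, $\Gamma$-diffuse with $\Gamma$ satisfying (\ref{eq:DiniConditionMu}), and $\liminf_{z\to\partial\Omega}(U_k-U_j)=0$ — and, using that $\mathrm{osc}_\Omega U_k\le\mathrm{osc}_\Omega h_g+\Vert\psi\Vert_\infty$ is bounded independently of $k$ so that the constant $B$ of the lemma is uniform, I would obtain for every $\varepsilon>0$
\[
\sup_\Omega(U_j-U_k)\le\varepsilon+B\int_0^{e^m\mathrm{c}_m(\{U_j-U_k>\varepsilon\})}\frac{\gamma^{1/m}(t)}{t}\,dt .
\]
Fixing $j_0$ and using $\psi+h_g\le U_k\le U_j\le U_{j_0}\le h_g$ gives $0\le U_j-U_k\le-\psi$ on $\Omega$ for $j,k\ge j_0$, and since $-\psi(z)\to0$ as $z\to\partial\Omega$ the superlevel set $\{U_j-U_k>\varepsilon\}$ sits inside a fixed compact $L_\varepsilon\Subset\Omega$; as the decreasing sequence $(U_j)$ converges in capacity on compacta, $\mathrm{c}_m(\{U_j-U_k>\varepsilon\})\le\mathrm{c}_m(\{U_j-U>\varepsilon\}\cap L_\varepsilon)\to0$ when $j\to+\infty$, uniformly in $k\ge j$. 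Hence $\limsup_{j\to+\infty}\sup_{k\ge j}\sup_\Omega(U_j-U_k)\le\varepsilon$, and letting $\varepsilon\to0$ shows $(U_j)$ is uniformly Cauchy on $\Omega$, hence on $\bar\Omega$ (all $U_j$ equal $g$ on $\partial\Omega$); since each $U_j$ is continuous on $\bar\Omega$, so is $U$. I expect the only delicate steps to be the two inherited from Step~3 of Theorem~1 — keeping the superlevel sets $\{U_j-U_k>\varepsilon\}$ away from $\partial\Omega$, which is precisely where the bounded subsolution $\psi$ is used, and the uniformity of $B$, which rests on the uniform oscillation bound above — rather than any essentially new difficulty.
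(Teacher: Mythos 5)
Your argument is correct, and its scaffolding coincides with the paper's: truncate $\mu_j = {\bf 1}_{K_j}\mu$, solve with Theorem~1 to get continuous $U_j$, pin $U_j$ between $\psi + w_g$ and $w_g$ (where $w_g$ is the maximal $m$-subharmonic function with boundary datum $g$) by comparison against the bounded subsolution, and pass to the decreasing limit $U$, which solves the equation with the right boundary values by monotone continuity of the Hessian operator. The divergence is in the final and crucial step, uniform convergence of $U_j$ on $\bar\Omega$. You re-run the machinery of Step~3 of Theorem~1: apply Lemma~\ref{lem:CapacityStability} to the pairs $(U_k,U_j)$ with the uniform constant $B$ secured by the oscillation bound, confine the superlevel sets to the compact $\{-\psi\ge\varepsilon\}$ (correctly deduced from $\psi|_{\partial\Omega}=0$ and upper semicontinuity), and use convergence in capacity of decreasing sequences. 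This is valid but heavy. The paper avoids it entirely by taking the adapted exhaustion $K_j := \{\psi < -\varepsilon_j\}$ with $\varepsilon_j\downarrow 0$ chosen so that $\mu(\{\psi=-\varepsilon_j\})=0$, and observing that $\psi_j := u_j + \max\{\psi,-\varepsilon_j\}$ is $m$-subharmonic with boundary datum $g$ and, by superadditivity of the Hessian together with locality,
\[
(dd^c\psi_j)^m\wedge\beta^{n-m}\ \ge\ \mu_j + {\bf 1}_{\{\psi>-\varepsilon_j\}}(dd^c\psi)^m\wedge\beta^{n-m}\ \ge\ \mu,
\]
so the comparison principle delivers the two-sided bound $u_j + \max\{\psi,-\varepsilon_j\}\le u\le u_j$, i.e.\ $0\le u_j-u\le-\max\{\psi,-\varepsilon_j\}\le\varepsilon_j$ uniformly on $\Omega$. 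That one-line sandwich replaces your stability estimate and capacity-convergence argument; the cost is that the exhaustion must be adapted to $\psi$, whereas your version works for an arbitrary compact exhaustion.
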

\begin{proof} Let $(K_j)_{j \in \N}$ be  an increasing sequence of relatively compact Borel subsets of  $\Omega$ such that $\Omega = \cup_j K_j$.
Set $\mu_j := {\bf 1}_{K_j} \mu$ for $j \in \N$.  By Theorem  1,  for each $j \in \N$ there exists  $u_j \in \mathcal{SH}_m(\Omega) \cap \mathcal{C}^0 (\bar{\Omega})$ such that $(dd^c u_j)^m \wedge  \beta^{n-m} = \mu_j$ and $u_j = g$ on $\partial \Omega$. By the comparison principle, $(u_j)$ is a decreasing sequence.

On the other hand let $w_g$ the maximal $m$-subharmonic function in $\Omega$ with $w_g = g$ in $\partial \Omega$. Then $\psi_g := \psi + w_g \in \mathcal{SH}_m(\Omega) \cap L^{\infty} ({\Omega})$ and $(dd^c \psi_g)^m \wedge \beta^{n-m} \geq \mu \geq \mu_j$. By the comparison principle it follows that $w_g \geq u_j \geq \psi_g$ on  $\Omega$.
Therefore $(u_j)$ decreases to a function $u \in \mathcal{SH}_m(\Omega) \cap L^{\infty} ({\Omega})$ such that $u=g$ in $\partial \Omega$  and $(d^c u)^m \wedge \beta^{n-m} = \mu$ weakly on $\Omega$. 

We need to prove that $u$ is  continuous in $\bar{\Omega}$. Indeed  choose $K_j := \{ \psi < -\varepsilon_ j\}$, for $j \in \N$,  where $(\varepsilon_j)_{j \in \N}$ is decreasing sequence of positive numbers converging to $0$ so that $\mu (\{ \psi = -\varepsilon_ j\}) = 0$  for any $j \in \N$. Then $ \psi_j := u_j + \max \{\psi,-\varepsilon_ j\}  \in \mathcal{SH}_m(\Omega) \cap L^{\infty} ({\Omega})$, $\psi_j = g$ inn $\partial \Omega$ and we have
$$
(dd^c \psi_j)^m \wedge \beta^{n-m} \geq (dd^c u_j)^m \wedge \beta^{n-m} + (dd^c  \max \{\psi,-\varepsilon_ j\})^m \wedge \beta^{n-m} \geq \mu,
$$
 weakly on $\Omega$.
 
By the comparison principle it follows that 
$$u_j + \max \{\psi,-\varepsilon_ j\} \leq u \leq u_j \, \, \, \text{on} \, \, \,  \bar{\Omega}.
$$ 
This proves that $(u_j)$ converges   to $u$ uniformly on $\bar{\Omega}$, hence $u$ is continuous in $\bar{\Omega}$.
\end{proof}
 
\subsection{Weak uniform stability theorem}
The role of the weak stability theorem $L^{1}$-$L^{\infty}$ was discovered  in \cite{EGZ09}, were  the H\"older continuity of the solution to the Dirichlet problem for the complex Monge-Amp\`ere equation on compact homogeneous manifolds was proved. Since then, this result became the main tool in deriving estimates on the modulus of continuity of solutions to  the complex Monge-Amp\`ere and Hessian equations.

In order to estimate the modulus of continuity of the solution in this general context, we need to prove a similar result.

Denote by
\begin{equation} \label{eq:integral-mu}
J_\Gamma  (\tau) := \int_0^\tau \, \frac{\gamma^{ 1 \slash m} (t)}{t } d t, \, \, \tau \in \R^+.
\end{equation}

\begin{theorem} \label{thm:stability} Let  $\mu$ be a positive  Borel measure on $\Omega$ with finite mass. Assume that $\mu$ is $\Gamma$-diffuse  with respect to the $m$-Hessian capacity and  satisfies the  Dini type condition (\ref{eq:DiniConditionMu}).

Let   $u, v\in \mathcal{SH}_m (\Omega) \cap L^{\infty} (\Omega)$ be  such that $ \liminf_{z\in\partial\Omega}(u-v)(z)\geq 0 $ and
$$
 (dd^c u)^m \wedge \beta^{n-m} \leq \mu,
$$
 in the sense of currents on $\Omega$. 
 
 Then
 $$
 \sup_{\Omega} (v-u)_+ \leq B h_\Gamma  (e^m \Vert v - u)_+\Vert_{m,\mu}^m)
 $$
 where $B > 0$ is a uniform constant, $\Vert (v - u)_+\Vert_{m,\mu}^m := \int_\Omega  (v - u)_+^m d \mu$ and $h_\Gamma$ is the reciprocal  of the  function $s \longmapsto s^{2m}  J_\Gamma^{-1} (s)$ on $\R^+$.
\end{theorem}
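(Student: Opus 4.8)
The plan is to derive the estimate from the capacity-based weak stability estimate of Lemma~\ref{lem:CapacityStability}, combined with the capacity estimate of Lemma~\ref{lem:CapEst} and a Chebyshev inequality, followed by an optimal choice of the free parameter. Throughout, set $w:=(v-u)_+$ and $a:=\Vert(v-u)_+\Vert_{m,\mu}^m=\int_\Omega w^m\,d\mu$, which is finite since $u,v\in L^\infty(\Omega)$ and $\mu(\Omega)<+\infty$. The first step is to apply Lemma~\ref{lem:CapacityStability} to the pair $(u,v)$: there is a uniform constant $B_0>0$ such that for every $\varepsilon>0$,
\[
\sup_\Omega (v-u)_+\ \le\ \varepsilon+B_0\,J_\Gamma\bigl(e^m\,c_m(\{v-u>\varepsilon\},\Omega)\bigr),
\]
where $J_\Gamma(\tau)=\int_0^\tau\gamma^{1/m}(t)\,t^{-1}\,dt$ is the nondecreasing function of~\eqref{eq:integral-mu}. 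Hence everything reduces to bounding the capacity $c_m(\{v-u>\varepsilon\},\Omega)$ in terms of $a$.

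For this bound I would use Lemma~\ref{lem:CapEst}. Since $\liminf_{z\to\partial\Omega}(u-v)(z)\ge0$ and $(dd^c u)^m\wedge\beta^{n-m}\le\mu$, the choice $s=t=\varepsilon/2$ in that lemma gives
\[
(\varepsilon/2)^m\,c_m(\{u<v-\varepsilon\},\Omega)\ \le\ \int_{\{u<v-\varepsilon/2\}}(dd^c u)^m\wedge\beta^{n-m}\ \le\ \mu(\{w>\varepsilon/2\}),
\]
and by Chebyshev's inequality for $\mu$, $\mu(\{w>\varepsilon/2\})\le(2/\varepsilon)^m\int_\Omega w^m\,d\mu=(2/\varepsilon)^m a$. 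Therefore $c_m(\{v-u>\varepsilon\},\Omega)\le 4^m a\,\varepsilon^{-2m}$, so $e^m\,c_m(\{v-u>\varepsilon\},\Omega)\le(4e)^m a\,\varepsilon^{-2m}$, and inserting this into the previous display (and using that $J_\Gamma$ is nondecreasing) yields
\[
\sup_\Omega (v-u)_+\ \le\ \varepsilon+B_0\,J_\Gamma\bigl((4e)^m a\,\varepsilon^{-2m}\bigr),\qquad\varepsilon>0.
\]

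It then remains to optimize in $\varepsilon$. Recall that $h_\Gamma$ is the inverse of $g(s):=s^{2m}J_\Gamma^{-1}(s)$, where $J_\Gamma^{-1}$ is the inverse of $J_\Gamma$; since $g$ is a continuous increasing bijection of $[0,J_\Gamma(+\infty))$ onto $\R^+$, $h_\Gamma$ is defined on all of $\R^+$ with values in $[0,J_\Gamma(+\infty))$. I would choose $\varepsilon:=h_\Gamma\bigl((4e)^m a\bigr)$, equivalently $\varepsilon^{2m}J_\Gamma^{-1}(\varepsilon)=(4e)^m a$; then $(4e)^m a\,\varepsilon^{-2m}=J_\Gamma^{-1}(\varepsilon)$, hence $J_\Gamma\bigl((4e)^m a\,\varepsilon^{-2m}\bigr)=\varepsilon$, and the last display becomes $\sup_\Omega(v-u)_+\le(1+B_0)\,h_\Gamma\bigl((4e)^m a\bigr)$. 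To pass to the normalization of the statement I would invoke the elementary scaling property $h_\Gamma(Cx)\le C^{1/(2m)}h_\Gamma(x)$ for $C\ge1$, $x\ge0$: either $C^{1/(2m)}h_\Gamma(x)$ falls outside the domain of $g$ and the inequality is trivial, or applying $g$ and using the monotonicity of $J_\Gamma^{-1}$ gives $g\bigl(C^{1/(2m)}h_\Gamma(x)\bigr)=C\,(h_\Gamma(x))^{2m}J_\Gamma^{-1}\bigl(C^{1/(2m)}h_\Gamma(x)\bigr)\ge C\,g(h_\Gamma(x))=Cx$. Applying this with $C=4^m$ and $x=e^m a$ yields $h_\Gamma((4e)^m a)\le 2\,h_\Gamma(e^m a)$, so
\[
\sup_\Omega(v-u)_+\ \le\ 2(1+B_0)\,h_\Gamma\bigl(e^m\,\Vert(v-u)_+\Vert_{m,\mu}^m\bigr),
\]
which is the claim with $B:=2(1+B_0)$.

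Finally, the degenerate case $a=0$ must be treated apart: then $\mu(\{v>u\})=0$, so Lemma~\ref{lem:CapEst} forces $c_m(\{v-u>s\},\Omega)=0$ for every $s>0$, hence $v\le u$ almost everywhere and, by the sub-mean value property of $m$-subharmonic functions, $v\le u$ on $\Omega$, so both sides of the desired inequality vanish. The substantive input is entirely contained in Lemmas~\ref{lem:CapacityStability} and~\ref{lem:CapEst}; here the only new ingredients are the Chebyshev bound converting the $L^m(\mu)$-mass into a capacity bound and the choice of $\varepsilon$, and I expect the only delicate point to be the bookkeeping around $h_\Gamma$ — checking that $\varepsilon=h_\Gamma((4e)^m a)$ is an admissible choice (so that $J_\Gamma(J_\Gamma^{-1}(\varepsilon))=\varepsilon$ genuinely holds) and that the scaling property of $h_\Gamma$ allows the constant $4^m$ to be absorbed into the uniform constant $B$ without spoiling uniformity.
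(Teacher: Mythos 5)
Your proof is correct and follows the paper's proof essentially verbatim: Lemma~\ref{lem:CapacityStability} for the capacity-form stability estimate, Lemma~\ref{lem:CapEst} with $s=t=\varepsilon/2$ followed by Chebyshev and the bound $(dd^cu)^m\wedge\beta^{n-m}\le\mu$ to convert the capacity of the sublevel set into the $L^m(\mu)$-norm, and finally the choice $\varepsilon=h_\Gamma(\cdot)$ to optimize. The only difference is that you carefully spell out two points the paper glosses over — the degenerate case $a=0$ and the scaling property $h_\Gamma(Cx)\le C^{1/(2m)}h_\Gamma(x)$ used to absorb the extra factor $2^{2m}$ into the uniform constant — which makes the argument tighter but is not a different route.
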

Observe that $h$ is continuous increasing  in $\R^+$ and $h(0) = 0$. 
\begin{proof}  We fix $\varepsilon > 0$ and apply Lemma \ref{lem:CapacityStability} to obtain the estimate
$$
\sup_{\Omega}(v-u)_+ \leq\varepsilon+ B \int_0^{ \varsigma (\varepsilon)} \frac{\gamma^{1 \slash m}  (t)}{t} d t,
$$
where
$$
\varsigma (\varepsilon) := e^m   \, \mathrm{c} _{m} (\{v - u>\varepsilon\}, \Omega).
$$
To estimate $\varsigma (\varepsilon)$ we apply  Lemma \ref{lem:CapEst} with $s = t = \varepsilon \slash 2$ which yields
\begin{eqnarray}
\mathrm{c} _{m} (\{v - u>\varepsilon\}, \Omega)&\leq& 2^m \varepsilon^{-m} \int_{v - u \geq \varepsilon\slash 2} (dd^c u)^m \wedge \beta^{n-m} \nonumber \\
&\leq & 2^{2 m} \varepsilon^{-2 m} \int_\Omega (v - u)_+^m (dd^c u)^m \wedge \beta^{n-m}.
\end{eqnarray}
Hence
$$
\varsigma (\varepsilon) \leq 2^{2 m}  e^m \varepsilon^{-2 m } \Vert (v - u)_+\Vert_{m,\mu}^m.
$$
Then by Lemma \ref{lem:CapacityStability}  
$$
\sup_{\Omega}(v-u)_+ \leq \varepsilon+ B J_\Gamma \left( 2^m e^m \varepsilon^{-2 m} \Vert (v - u)_+\Vert_{m,\mu}^m\right).
$$
Therefore if we choose $\varepsilon := h_\Gamma(2^m e^m  \Vert (v - u)_+\Vert_{m,\mu}^m)$ we obtain the required estimate.
\end{proof}

\begin{corollary} Let  $\mu$ be a positive  Borel measure on $\Omega$ with finite mass. Assume that $\mu$ is $\Gamma$-diffuse   with respect to  the $m$-Hessian capacity with $\Gamma (t) := t^{1 + a}$, where $a > 0$.
Let   $u, v\in \mathcal{SH}_m (\Omega) \cap L^{\infty} (\Omega)$ be  such that $ \liminf_{z\in\partial\Omega}(u-v)(z)\geq 0 $ and
$$
 (dd^c u)^m \wedge \beta^{n-m} \leq \mu,
$$
 in the sense of currents on $\Omega$. 
 
 Then there exists a uniform constant $A  = A (a,m) >0$ such that
 $$
 \sup_{\Omega} (v-u)_+ \leq A \left(\Vert (v - u)_+\Vert_{m,\mu}\right)^{\nu}
 $$
 where $\Vert (v - u)_+\Vert_{m,\mu} := \left[\int_\Omega  (v - u)_+^m d \mu\right]^{1 \slash m}$ and $\nu := \frac{a }{2 a  +1}$.
\end{corollary}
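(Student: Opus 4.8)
The plan is to obtain this corollary as an immediate specialization of Theorem \ref{thm:stability} once the auxiliary functions appearing there are made explicit for the particular weight $\Gamma(t)=t^{1+a}$. First I would check that the hypotheses of Theorem \ref{thm:stability} are fulfilled: with $\Gamma(t)=t^{1+a}$ one has $\gamma(t)=\Gamma(t)/t=t^{a}$, which is non-decreasing, and the Dini type condition (\ref{eq:DiniConditionMu}) holds since
$$
\int_{0^+}\frac{\gamma(t)^{1/m}}{t}\,dt=\int_{0^+}t^{a/m-1}\,dt<+\infty,
$$
because $a/m>0$. Also $\mu$ has finite mass by assumption, so Theorem \ref{thm:stability} applies to the pair $u,v$.

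Next I would compute in closed form the functions $J_\Gamma$ and $h_\Gamma$ occurring in the statement of Theorem \ref{thm:stability}. From (\ref{eq:integral-mu}),
$$
J_\Gamma(\tau)=\int_0^\tau t^{a/m-1}\,dt=\frac{m}{a}\,\tau^{a/m},
\qquad\text{hence}\qquad
J_\Gamma^{-1}(s)=\left(\tfrac{a}{m}\,s\right)^{m/a}.
$$
Therefore the map $s\longmapsto s^{2m}\,J_\Gamma^{-1}(s)=\left(\tfrac{a}{m}\right)^{m/a}s^{\,2m+m/a}$ is an increasing bijection of $\R^+$, and since $\frac{1}{2m+m/a}=\frac{a}{m(2a+1)}$ and $\frac{m/a}{2m+m/a}=\frac{1}{2a+1}$, its reciprocal is
$$
h_\Gamma(y)=\left(\tfrac{m}{a}\right)^{\frac{1}{2a+1}}y^{\frac{a}{m(2a+1)}},\qquad y\in\R^+.
$$

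Finally I would substitute this into the conclusion of Theorem \ref{thm:stability}, with $y=e^m\,\Vert(v-u)_+\Vert_{m,\mu}^m$:
$$
\sup_{\Omega}(v-u)_+\le B\,h_\Gamma(y)
= B\left(\tfrac{m}{a}\right)^{\frac{1}{2a+1}}e^{\frac{a}{2a+1}}\left(\Vert(v-u)_+\Vert_{m,\mu}^m\right)^{\frac{a}{m(2a+1)}}
= A\,\Vert(v-u)_+\Vert_{m,\mu}^{\,\nu},
$$
where $\nu=\frac{a}{2a+1}$ and $A:=B\left(\tfrac{m}{a}\right)^{\frac{1}{2a+1}}e^{\frac{a}{2a+1}}$ depends only on $a$ and $m$ (the constant $B$ from Theorem \ref{thm:stability} being uniform). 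This gives the asserted estimate. There is no genuine difficulty here: the only points requiring care are the bookkeeping of the exponents in inverting $s\mapsto s^{2m}J_\Gamma^{-1}(s)$ and checking that $\nu=\frac{a}{2a+1}$ comes out exactly as claimed; everything else is a direct citation of Theorem \ref{thm:stability}.
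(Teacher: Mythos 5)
Your proposal is correct and follows the same route as the paper: cite Theorem \ref{thm:stability}, compute $J_\Gamma(\tau)=\frac{m}{a}\tau^{a/m}$ explicitly, invert $s\mapsto s^{2m}J_\Gamma^{-1}(s)$ to get $h_\Gamma(t)=C(a,m)\,t^{\frac{a}{m(2a+1)}}$, and substitute. Your exponent bookkeeping (reducing $\frac{1}{2m+m/a}$ to $\frac{a}{m(2a+1)}$ and then to $\nu=\frac{a}{2a+1}$ after absorbing the $m$-th power of the norm) is the same computation the paper sketches, just written out in full.
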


\begin{proof} It is a straightforward consequence of Theorem \ref{thm:stability}. Indeed it is enough to compute $h_\Gamma$ in this case. A simple computation shows that $J_\Gamma (\tau) = \frac{m}{a} \tau^{a\slash m}$. Hence 
$h_\Gamma (t) = C(a,m)  t^{\frac{a}{2 a m +m}}$ for $t > 0$.
\end{proof}

\section{ Mass estimates for Hessian measures}

For the proof of Theorem 2, we will use the same method  as in \cite{BZ20} which was inspired by an idea in \cite{KN20b}. However, since our measure does not have a compact support nor a bounded mass, we need to use the control on the behaviour of the mass of the $m$-Hessian  measure of the subsolution close to the boundary, given by Lemma \ref{lem:ComparisonIneq}.  
 
 \subsection{Proof of Theorem 2}
Recall the volume estimate stated before.

Let us fix $0 < r < m \slash (n-m)$ and $0 < b < 2 n$ and define the following function on $\R^+$:
\begin{equation}\label{eq:estimatefunction}
\ell_m (t) := \left\{\begin{array}{lcl} 
 t^{r}, \,  \, \, \, \, \, \, \, \, \, \hbox{if} \, \,   1 \leq m < n, \\
   \exp ( -b \, t^{- 1 \slash n}), \, \, \,  \hbox{if} \, \,  m = n.
\end{array}\right.
\end{equation}
 
Then the estimates  (\ref{eq:DK}) and (\ref{eq:ACKPZ}) can be written as follows: there exists a constant $B_m > 0$  such that for any Borel set $S \subset \Omega$, 
 \begin{equation} \label{eq:volumeestimate}
 \lambda_{2 n} (S) \leq B_m  \,  \ell_m \left(\text{c}_m (S,\Omega)\right) \, \text{c}_m (S,\Omega),
  \end{equation}
  where $B_m$ depends on $m,r$ and $\Omega$ when $m<n$ and  $B_n$ depends on $n, b$ and $\Omega$.

 Recall that $\varphi \in \mathcal{SH}_m (\Omega)  \cap \mathcal{C}^0 (\bar{\Omega})$ with $\varphi = 0$ on $\partial \Omega$ and we want to estimate the mass of the Hessian measure $\sigma_m (\varphi)$ on compact sets in $\Omega$.
 \begin{proof} 
We extend $\varphi$ as a continuous function in the whole of $\C^n$ with the same modulus of continuity and denote by $\varphi$ the extension.
 Then denote by $\varphi_{\delta}$ ($0 < \delta < \delta_0$) the smooth approximants of $\varphi $ in $\C^n$, defined  for $z \in \bar \Omega$ by the formula
   $$
    \varphi_{\delta}(z)=  \int_{\C^n}\varphi(\xi)\chi_{\delta}(z-\xi) d\lambda(\xi).
    $$

 Observe that for $0 < \delta < \delta_0$ and $z\in\Omega_{\delta}  := \{z \in \Omega ; \mathrm{dist} (z, \partial \Omega) > \delta\}$, 
     $$
     \varphi_{\delta}(z)=  \int_{\Omega}\varphi(z - \zeta)\chi_{\delta}(\zeta) d\lambda(\zeta),
     $$
  and then $\varphi_\delta \in  \mathcal{SH}_m (\Omega_{\delta})\cap\mathcal{C}^{\infty}(\C^n)$.
 
 Since $\varphi \in C^{0} (\bar\Omega)$, we have $\varphi_\delta \leq \varphi + \kappa_{\varphi} (\delta)$ on $\Omega$.

 We consider the $m$-subharmonic envelope of   $\varphi_\delta$ on  $\Omega$  defined by the formula 
 $$
 \psi_\delta := \sup \{\psi \in \mathcal{SH}_m (\Omega) ; \psi \leq \varphi_\delta \, \, \, \hbox{on} \, \, \,  \Omega \}\cdot 
 $$ 
 
 It follows from  \cite[Theorem 3.3]{BZ20}  that $\psi_\delta \in \mathcal{SH}_m (\Omega)$ and $\psi_\delta \leq \varphi_\delta$ on $\Omega$. 
  
  Fix   $0 < \delta < \delta_0$  and a compact set $K \subset \Omega_\delta$ and consider the following set
 
 $$
 E :=\{3 \, \kappa (\delta)  u_K^*+ \psi_\delta<\varphi- 2 \, \kappa (\delta)\} \subset \Omega.
 $$

 Since $ \kappa$ is the modulus of continuity of $\varphi $ on $\bar \Omega$, we have  $\varphi -  \kappa (\delta) \leq \varphi_{\delta}  \leq \varphi + \kappa (\delta)$ in $\Omega$ and then   $\varphi -  \kappa (\delta) \leq \psi_\delta \leq  \varphi_{\delta}  \leq \varphi  + \kappa (\delta)$  in $\Omega$. 
Therefore  $\liminf_{z \to \partial \Omega} (\psi_\delta - \varphi + \kappa (\delta)) \geq 0$, and then $E \Subset \Omega$. By the comparison principle, we conclude that
 
\begin{eqnarray} \label{eq:fundmentalestimate}
 \int_{E}(dd^c\varphi)^m\wedge\beta^{n-m} & \leq & \int_{E}(dd^c(3 \kappa (\delta)  u_K^* + \psi_{\delta}))^m\wedge\beta^{n-m} \nonumber \\
 & \leq & 3 \kappa (\delta) L \int_{E}(dd^c(u_K^* + \psi_{\delta}))^m\wedge\beta^{n-m} \\
 &+&\int_{E}(dd^c\psi_{\delta})^m\wedge\beta^{n-m}, \nonumber
 \end{eqnarray}
 where $L := \max_{0 \leq j \leq m - 1} (3 \kappa (\delta_0))^j$.
 
 Observe that $-1 + \varphi -  \kappa (\delta) \leq u_K^* + \psi_\delta  \leq \varphi + \kappa (\delta)$ on $\Omega$, hence   $\vert u_K^* + \psi_\delta\vert \leq \sup_{\Omega}  \vert \varphi\vert + 1 + \kappa (\delta_0) =: M_0$ on $\Omega$. 
 
 Therefore from inequality (\ref{eq:fundmentalestimate}), it follows that
 
\begin{equation} \label{eq:finalestimate1}
  \int_{E}(dd^c\varphi)^m\wedge\beta^{n-m} \leq 3 \kappa (\delta) L  M_0^m  \text{c}_m (E,\Omega) + \int_{E}(dd^c\psi_{\delta})^m\wedge\beta^{n-m}.
\end{equation}
 
 Moreover we have
 
 \begin{equation}\label{eq:1}
 dd^c\varphi_{\delta}\leq\frac{M_1 \kappa (\delta)}{\delta^{2}}  \beta, \, \, \, \mathrm{pointwise \, on} \, \, \, \Omega,
 \end{equation}
 where $M_1 > 0$ is a uniform constant depending only on $\Omega$.

By \cite[Theorem 3.3]{BZ20}, we have

\begin{equation}\label{eq:2}
 (dd^c\psi_{\delta})^m\wedge\beta^{n-m}  \leq  (\sigma_m (\varphi_{\delta}))_+ \leq\frac{M_1^m \kappa (\delta)^{m }}{\delta^{2 m}}  \beta^n,
 \end{equation}
 in the sense of currents on $ \Omega$.
 
 Therefore  
\begin{equation}\label{eq:3}
\int_{E}(dd^c\psi_{\delta})^m\wedge\beta^{n-m} \leq   M_1^{m} \kappa (\delta)^{m } \delta^{-2 m} \lambda_{2 n} (E).
 \end{equation}

Let us denote for simplicity $c_m (\cdot) :=  \text{c}_m (\cdot,\Omega)$. Then from (\ref{eq:finalestimate1}) and (\ref{eq:1}), we deduce that
\begin{equation}\label{eq:finalestimate2}
\int_{E}(dd^c\varphi)^m\wedge\beta^{n-m} \leq  3 \kappa (\delta) L M_0^m  \text{c}_m (E)+ M_1^{m} \kappa(\delta)^m \delta^{-2 m} \lambda_{2 n} (E) 
\end{equation} 

From (\ref{eq:finalestimate2}) and (\ref{eq:volumeestimate}), it follows that 
\begin{eqnarray} \label{eq:finalestimate3}
\int_{E}(dd^c\varphi)^m\wedge\beta^{n-m} & \leq &   3 \kappa (\delta) L M_0^m  \text{c}_m (E) \nonumber\\
&+&  B_m M_1^{m} \kappa(\delta)^m \delta^{-2 m} \ell_m (\text{c}_m (E)) \text{c}_m (E). 
\end{eqnarray}

Since  $\varphi - \kappa (\delta) \leq \psi_\delta  \leq  \varphi_{\delta}  \leq \varphi + \kappa (\delta)$  on $\Omega$, it follows that  
 $ E \subset\{u_K^* < - 1 \slash 3\}.$ 
 
The comparison principle yields the following estimate :

 \begin{equation}\label{eq:4}
  \text{c}_m  (E,\Omega) \leq 3^{m}   \text{c}_m (K,\Omega).
 \end{equation}
Indeed fix $v \in SH_m (\Omega)$ with $- 1 \leq v \leq 0$. Then $ E \subset\{ 3 u_K^* < - 1 \} \subset \{3 u_K^* < v\} \Subset \Omega$ and  the comparison principle implies that
 $$
 \int_{E} (dd^c v)^m \wedge \beta^{n - m} \leq  \int_{\{ 3 u_K^* <  v\}} 3^m  (dd^c u_K^*)^m \wedge \beta^{n - m} \leq 3^m \text{c}_m (K,\Omega).
 $$
Taking the supremum over $v$ we obtain the estimate (\ref{eq:4}).

Since $K \setminus \{u_K < u_K^*\} \subset  \{ u_K^* = - 1\} \subset E$ and $K \cap \{u_K < u_K^*\}$ has zero capacity, we see that $\int_K (dd^c\varphi)^m\wedge\beta^{n-m} \leq  \int_E (dd^c\varphi)^m\wedge\beta^{n-m}$. 

Therefore  we finally deduce from (\ref{eq:finalestimate2}), (\ref{eq:volumeestimate}) and  (\ref{eq:2}) that  for a fixed $0 < \delta < \delta_0$ and any compact set $K \subset \Omega_\delta$,  we have 
\begin{eqnarray*}
\int_K (dd^c\varphi)^m\wedge\beta^{n-m} &\leq & A_0 \kappa (\delta)  c_m (K) \\
&+& A_1  \kappa (\delta)^m \delta^{-2m}   \ell_m (3^m c_m (K)) \,  c_m (K),
\end{eqnarray*} 
where $A_0 :=  3^{m + 1}  L M_0^m$ and $A_1 :=  B_m M_1^{m}  3^{m }  $. 

By inner regularity of the capacity, we deduce that the previous estimate holds for any Borel subset $S \subset \Omega_\delta$ i.e.
\begin{eqnarray} \label{eq:estimate1}
\int_S (dd^c\varphi)^m\wedge\beta^{n-m} &\leq & A_0 \kappa (\delta)  c_m (S) \\
& +&  A_1 \kappa (\delta)^m \delta^{-2m}   \ell_m (3^m c_m (S)) \,  c_m (S). \nonumber
\end{eqnarray} 

Let $K \subset \Omega$ be any fixed compact set and $0 < \delta < \delta_0$. Then

$$
\int_K (dd^c\varphi)^m\wedge\beta^{n-m}  = \int_{K \cap \Omega_\delta} (dd^c\varphi)^m\wedge\beta^{n-m}  + \int_{K \setminus \Omega_\delta} (dd^c\varphi)^m\wedge\beta^{n-m}.
$$

We will estimate each term separately. By (\ref{eq:estimate1}) the first term is estimated easily: for $0 < \delta < \delta_0$, we have

\begin{equation} \label{eq:estimate2}
\int_{K \cap \Omega_\delta} (dd^c\varphi)^m\wedge\beta^{n-m}  \leq A_0 \kappa (\delta) c_m (K) +  A_1  \kappa (\delta)^m \delta^{-2m}   \ell_m (3^m c_m (K)) \,  c_m (K).
\end{equation} 

To estimate the second term we apply Lemma \ref{lem:ComparisonIneq} for the Borel set $ S := K \setminus \Omega_\delta$. Since $\delta_S (\partial \Omega) \leq \delta$ we get
$$
\int_{K \setminus \Omega_\delta} (dd^c\varphi)^m\wedge\beta^{n-m}  \leq \kappa (\delta)^{m} c_m (K).
$$


Therefore we finally deduce from (\ref{eq:finalestimate2}), (\ref{eq:finalestimate3}), (\ref{eq:DK}) and  (\ref{eq:2}) that  for a fixed $0 < \delta < \delta_0$ and any compact set $K \subset \Omega$,  we have 
\begin{eqnarray}\label{eq:finalestimate4}
\int_K (dd^c\varphi)^m\wedge\beta^{n-m} & \leq & A_0' \, \kappa(\delta) \,   c_m (K)\\
&  + &  A_1 \,  \kappa (\delta)^{m}  \, \delta^{-2m} \,  \ell_m (3^m c_m (K)) c_m(K), \nonumber
\end{eqnarray} 
where $\kappa = \kappa_\varphi$,  $A_0' :=  3^{m + 1}  L M_0^m + \kappa (\delta_0)^{m-1}$.

We want to optimize the right hand side of (\ref{eq:finalestimate4}) by choosing  $\delta $ so that $\kappa(\delta)  = \kappa (\delta)^{m} \delta^{-2m} \ell_m (c_m (K)) $ i.e. 
$$
\kappa (\delta)^{1-m} \delta^{2 m}  = \ell_m (3^m c_m (K)).
$$

Observe that the function $x \longmapsto \kappa (x)^{1-m}  x^{2 m}$ is continuous on $\R^+$ and takes the values $0$ at $t=0$ and $+ \infty$ at $ t = +\infty$. Therefore for any $y > 0$, the equation $y = \kappa (x)^{1-m}  x^{2 m}$ has at least one solution $x >0$ . Let us define  
the lower inverse function of the function $x \longmapsto \kappa (x)^{1-m}  x^{2 m}$ by  the following formula:
 \begin{equation} \label{eq:lowerinverse}
 \theta_m (y) :=  \inf \{x > 0 ;  \kappa (x)^{1-m}  x^{2 m} = y \}, y > 0.
 \end{equation}
 
Choose $\delta >0$ to be  $\delta = \theta_m\left[\ell_m (3^m c_m(K)\right]$. 

Set $\vartheta_m (t) := \kappa \circ \theta_m (\ell_m (3^m t))$ and observe that if $ \delta_K(\partial \Omega) \leq \vartheta_m  (c_m(K))$, then by Lemma \ref{lem:ComparisonIneq} we get
 \begin{eqnarray} \label{eq:finalIneq1}
   \int_{K}(dd^c\varphi)^m\wedge\beta^{n-m}  \leq  [\vartheta (c_m(K))]^m \, c_m (K).
  \end{eqnarray}
 
  Now assume that $ \vartheta_m  (c_m(K)) < \delta_K (\partial \Omega) \leq \delta_0$. Then we can take $\delta := \vartheta_m  (c_m(K))$ in the  inequality (\ref{eq:finalestimate4}) and get
  
  \begin{equation}\label{eq:finalIneq2}
   \int_{K}(dd^c\varphi)^m\wedge\beta^{n-m} \leq B \, \vartheta (c_m (K) \, c_m (K)  + [\vartheta (c_m (K))]^m \,  c_m (K).
  \end{equation}
  Combining inequalities  (\ref{eq:finalIneq1} ) and (\ref{eq:finalIneq2}), we obtain the estimate of the theorem with the constant $B$ given by the following formula:
  \begin{equation} \label{eq:finalConst}
   B := A_0'  + A_1 + 1.
\end{equation}   
\end{proof}

\subsection{Some consequences}

For $1 \leq m < n$,  we recover the result of (\cite{BZ20}).
\begin{corollary}  Let $\Omega \Subset \C^n$ be a $m$-hyperconvex  domain and $\varphi\in \mathcal{SH}_m (\Omega)\cap \mathcal{C}^{\alpha} (\overline\Omega)$ such that $\varphi = 0$ in $\partial \Omega$.  

 Then for any  $0 < \epsilon <\alpha m \slash  [ (n-m) (2m + \alpha (1-m))]$,  there exists a constant $ A = A (m,n,\alpha, \epsilon, \Omega)>0$ such that for every compact $K\subset\Omega$, we have
$$
\int_ K (dd^c\varphi)^m \wedge \beta^{n-m} \leq A \left[\text{c}_m (K)\right] ^{1 + \epsilon}.
$$ 

\end{corollary}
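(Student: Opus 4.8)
The plan is to deduce this corollary directly from Theorem~2 by specialising the modulus of continuity to the H\"older case. Since $\varphi\in\mathcal{C}^\alpha(\overline{\Omega})$ we have $\kappa_\varphi(t)\le C_0\,t^\alpha$ for all $t\ge 0$, where $C_0$ is the H\"older seminorm of $\varphi$; moreover we are in the regime $1\le m<n$, so $\ell_m(t)=t^r$ for the fixed parameter $0<r<m/(n-m)$ appearing in (\ref{eq:estimatefunction}).

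The first step is to estimate the functions $\theta_m$ and $\vartheta_m$ of Theorem~2 under this hypothesis. Because $1-m\le 0$, the bound $\kappa_\varphi(x)\le C_0 x^\alpha$ reverses to give $\kappa_\varphi(x)^{1-m}\ge C_0^{1-m}x^{\alpha(1-m)}$, so the function $h(x):=x^{2m}\kappa_\varphi(x)^{1-m}$ that defines $\theta_m$ satisfies $h(x)\ge C_0^{1-m}x^{2m+\alpha(1-m)}$. Note that $2m+\alpha(1-m)\ge m+1>0$ for every admissible $m$ and $\alpha$, so $x\mapsto C_0^{1-m}x^{2m+\alpha(1-m)}$ is a strictly increasing bijection of $\R^+$; since (as established in the proof of Theorem~2) $h$ is continuous, vanishes at $0^+$, and here dominates this power function, its lower inverse obeys
$$
\theta_m(y)\le\bigl(C_0^{m-1}y\bigr)^{1/(2m+\alpha(1-m))},\qquad y>0 .
$$
Substituting $\ell_m(t)=t^r$ and then applying $\kappa_\varphi(x)\le C_0 x^\alpha$ once more yields
$$
\vartheta_m(t)=\kappa_\varphi\circ\theta_m\circ\ell_m(t)\le C_1\,t^{q},\qquad q:=\frac{\alpha r}{2m+\alpha(1-m)},
$$
with a constant $C_1=C_1(m,n,\alpha,r,\varphi,\Omega)>0$.

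The second step is to feed this into Theorem~2. Given $0<\epsilon<\alpha m/[(n-m)(2m+\alpha(1-m))]$, I first choose $r\in(0,m/(n-m))$ close enough to $m/(n-m)$ that $q>\epsilon$; the stated restriction on $\epsilon$ is precisely what makes such a choice possible. For a compact set $K\subset\Omega$ with $c_m(K)\le 1$, using $mq\ge q$ we get $\vartheta_m(c_m(K))+[\vartheta_m(c_m(K))]^m\le (C_1+C_1^m)\,c_m(K)^{q}$, so Theorem~2 gives
$$
\int_K(dd^c\varphi)^m\wedge\beta^{n-m}\le B\,(C_1+C_1^m)\,c_m(K)^{1+q}\le B\,(C_1+C_1^m)\,c_m(K)^{1+\epsilon} .
$$
For $c_m(K)>1$ I instead invoke the elementary bound of Lemma~\ref{lem:ComparisonIneq}, namely $\int_K(dd^c\varphi)^m\wedge\beta^{n-m}\le(\mathrm{osc}_{\overline{\Omega}}\varphi)^m\,c_m(K)\le(\mathrm{osc}_{\overline{\Omega}}\varphi)^m\,c_m(K)^{1+\epsilon}$. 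Taking $A:=\max\{B(C_1+C_1^m),(\mathrm{osc}_{\overline{\Omega}}\varphi)^m\}$ then covers both ranges and completes the argument.

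This is a purely computational consequence of Theorem~2, so there is no genuine obstacle. The only points requiring minor care are the reversal of inequalities caused by the sign of $1-m$ (which is why the H\"older bound on $\kappa_\varphi$ still controls $\theta_m$ from above), and the limiting choice $r\to m/(n-m)$ used to exhaust the full admissible range of $\epsilon$; both are routine.
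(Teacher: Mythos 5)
Your proof is correct and follows essentially the same route as the paper: specialise Theorem~2 to $\kappa_\varphi(t)\lesssim t^\alpha$, compute $\theta_m$ and $\vartheta_m$, and let $r\to m/(n-m)$ to exhaust the admissible range of $\epsilon$. The paper's own proof is just a one-line computation taking $\kappa_\varphi(t)=\kappa_0 t^\alpha$ exactly and reading off $\vartheta_m(t)=t^{\alpha r/[2m+\alpha(1-m)]}$; your version is more careful in three minor respects that the paper glosses over. First, you work with the inequality $\kappa_\varphi(t)\le C_0 t^\alpha$ and track how the sign of $1-m$ reverses it inside $\theta_m^{-1}$, so the one-sided bound on $\theta_m$ (and hence on $\vartheta_m$) is genuinely justified rather than assumed via equality. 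Second, you make explicit the choice of $r$ close to $m/(n-m)$ that is needed to reach every $\epsilon$ in the stated open interval. Third, you notice that Theorem~2's bound is only useful when $c_m(K)\le 1$ (since for large capacity the exponent would go the wrong way), and you cover $c_m(K)>1$ separately via the trivial oscillation bound of Lemma~\ref{lem:ComparisonIneq}; the paper never addresses this case. All three points are routine, but filling them in makes your argument complete where the paper's is terse.
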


\begin{proof}
Since $\kappa_\varphi (t) = \kappa_0 t^{\alpha}$, by Theorem 2, we have for any compact $K \subset \Omega$,
$$
\int_{K}(dd^c\varphi)^m\wedge\beta^{n-m}  \leq  A  \, \left\{\vartheta_m  (c_m (K)) + \left[\vartheta_m  (c_m (K))\right]^m\right\} \, c_m (K),
$$
where
$\vartheta_m (t) := \kappa_\varphi \circ \theta_m \circ \ell_m (3^m t)$ and $\theta_m^{-1} (t) := t^{2 m+ \alpha(1 - m)}$. 
On the other hand $\ell_m (t) = t^{r}$ with $0 < r < m \slash (n-m)$, hence $\vartheta_m (t) = (3^m t)^{\alpha r \slash [2m + \alpha (1-m)]}$.
\end{proof}
For $m=n$ we obtain a much more precise result.

\begin{corollary}  \label{cor:MongeAmpereMass} Let $\Omega \Subset \C^n$ be a hyperconvex  domain.
 Let $\varphi\in \mathcal{PSH} (\Omega)\cap \mathcal{C}^{\alpha} (\overline\Omega)$ such that $\varphi = 0$ in $\partial \Omega$.  Then for any $0 < q <  \frac{ 2 n \alpha}{  3 (2 n+  (1 - n) \alpha)}$, there exists a constant $ Q = Q (n,q,\alpha,\Omega)>0$ such that for every compact $K\subset\Omega$, 
$$
\int_ K (dd^c\varphi)^n \leq Q \, \text{c}_n (K,\Omega) \, \exp \left( - q \left[\text{c}_n (K,\Omega)\right]^{- 1 \slash n}\right).
$$ 
\end{corollary}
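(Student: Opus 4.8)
The plan is to specialize Theorem 2 to the case $m=n$ and feed in the H\"older hypothesis. Write $\kappa_0 := [\varphi]_{\mathcal{C}^\alpha(\bar\Omega)}$, so that $\kappa_\varphi(t)\le \kappa_0 t^\alpha$; since every quantity appearing in the estimate of Theorem 2 is monotone in the modulus of continuity, I may replace $\kappa_\varphi$ by its majorant $t\mapsto \kappa_0 t^\alpha$ without loss of generality. For this choice the auxiliary function $t\mapsto t^{2n}\kappa_\varphi(t)^{1-n}=\kappa_0^{1-n}\,t^{2n+\alpha(1-n)}$ is strictly increasing on $\R^+$ (note $2n+\alpha(1-n)\ge n>0$ because $0<\alpha\le 1$ and $n\ge 1$), so its inverse is the genuine function
$$
\theta_n(y)=\bigl(\kappa_0^{\,n-1}\,y\bigr)^{\frac{1}{2n+\alpha(1-n)}}.
$$
With $\ell_n(t)=\exp\!\bigl(-b\,t^{-1/n}\bigr)$ for a parameter $b\in(0,2n)$ to be fixed later, a direct computation of the composition $\vartheta_n=\kappa_\varphi\circ\theta_n\circ\ell_n$ gives
$$
\vartheta_n(t)=\kappa_0^{\,1+(n-1)\rho}\,\exp\!\bigl(-q_0\,t^{-1/n}\bigr),\qquad \rho:=\frac{\alpha}{2n+\alpha(1-n)},\quad q_0:=b\,\rho .
$$

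Next I would insert this into Theorem 2, which yields, for every compact $K\subset\Omega$,
$$
\int_K(dd^c\varphi)^n\le B\left\{\vartheta_n(c_n(K))+\bigl[\vartheta_n(c_n(K))\bigr]^n\right\}c_n(K).
$$
Writing $t=c_n(K)$ and $c_1:=\kappa_0^{\,1+(n-1)\rho}$, we have $\vartheta_n(t)=c_1\exp(-q_0 t^{-1/n})$ and $\vartheta_n(t)^n=c_1^{\,n}\exp(-nq_0 t^{-1/n})\le c_1^{\,n}\exp(-q_0 t^{-1/n})$, the last inequality because $n\ge 1$ and the exponent is negative. Hence both terms in the brace are controlled by the same exponential, and
$$
\int_K(dd^c\varphi)^n\le B\,(c_1+c_1^{\,n})\,c_n(K)\,\exp\!\bigl(-q_0\,c_n(K)^{-1/n}\bigr)
$$
for the whole range of $c_n(K)$ at once; in particular no separate treatment of compacts of large capacity is needed.

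Finally I would tune the parameter. As $b$ runs over $(0,2n)$, the exponent $q_0=\dfrac{b\alpha}{2n+\alpha(1-n)}$ runs over $\Bigl(0,\ \dfrac{2n\alpha}{2n+\alpha(1-n)}\Bigr)$. Given a target $q$ with $0<q<\dfrac{2n\alpha}{2n+\alpha(1-n)}$, choose $b\in(0,2n)$ with $q_0>q$; then $\exp(-q_0 t^{-1/n})\le\exp(-q t^{-1/n})$ for all $t>0$, and the previous display gives the claim with $Q:=B(c_1+c_1^{\,n})$, which depends only on $n$, $q$, $\alpha$, $\kappa_0$ and $\Omega$ (the last through $B$). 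I do not expect any genuine obstacle beyond Theorem 2 itself, which does all the analytic work; the only points requiring a little care are verifying that the auxiliary function is strictly increasing so that $\theta_n$ is an honest inverse, correctly tracking the powers of $\kappa_0$ through $\kappa_\varphi\circ\theta_n\circ\ell_n$, and recalling that the parameter $b$ coming from the sharp volume estimate \eqref{eq:ACKPZ} may be chosen arbitrarily close to $2n$.
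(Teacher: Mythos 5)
Your proposal is correct and follows essentially the same route as the paper: specialize Theorem~2 to $m=n$, compute $\theta_n$ and $\vartheta_n$ explicitly for the H\"older modulus $\kappa_0 t^\alpha$, and observe that $b\in(0,2n)$ tunes the exponent $q_0=\alpha b/(2n+\alpha(1-n))$ over the claimed range. Your write-up is slightly more careful than the paper's (tracking $\kappa_0$ explicitly, dominating $\vartheta_n^n$ by $\vartheta_n$, and checking that the auxiliary function is genuinely increasing so that $\theta_n$ is a bona fide inverse), but the argument is the same one.
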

\begin{proof} Since $\kappa_\varphi (t) = \kappa_0 t^{\alpha}$, by Theorem 2, we have for any compact $K \subset \Omega$,
$$
\int_{K}(dd^c\varphi)^m\wedge\beta^{n-m}  \leq  A  \, \left\{\vartheta_m  (c_m (K)) + \left[\vartheta_m  (c_m (K))\right]^m\right\} \, c_m (K),
$$
where
$\vartheta_m (t) := \kappa \circ \theta_m \circ \ell_m (3^m t)$ and $\theta_m^{-1} (t) := t^{2 m+ \alpha(1 - m)}$. 

When $m = n$ we have $\ell_n (t) = \exp (-b t^{-1 \slash n})$ with $b < 2 n$, hence $\vartheta_n (t) = \exp (- q  t^{-1 \slash n})$, where $q = \frac{\alpha b }{ 3 (2 n+  (1 - n) \alpha)}$.
\end{proof}

From this result, we deduce a global exponential  integrability theorem for plurisubharmonic functions in the Cegrell class with respect to Borel measures with  H\"older continuous Monge-Amp\`ere potentials. 

Let $\mathcal{F} (\Omega)$ defined as the class of negative plurisubharmonic functions $\psi$ on $\Omega$ such that there exists a decreasing sequence  of plurisubharmonic test functions  $(\psi_j)$ in $\mathcal{E}^0 (\Omega)$ such that $\sup_j \int_{\Omega} (dd^c \psi_j)^n < + \infty$ (see \cite{Ceg04}).

 We denote by  $\dot{\mathcal{F}} (\Omega)$ the set of functions $\psi \in \mathcal{F} (\Omega)$ normalized by the condition $\int_\Omega (dd^c \psi)^n \leq 1$.
\begin{corollary}  Let $\Omega \Subset \C^n$ be a hyperconvex  domain and $\varphi\in \mathcal{PSH} (\Omega)\cap \mathcal{C}^{\alpha} (\overline\Omega)$ such that $\varphi = 0$ in $\partial \Omega$.   Let  $k > n$ and  $0 < q < q_n (\alpha) := \frac{ 2 n \alpha}{   3 (2n + (1- n) \alpha )}$. Then there exists a constant $ \tilde Q = \tilde Q (k,n,q)>0$ such that for  any  $\psi \in \dot{\mathcal{F}} (\Omega)$, 
 $$
 \int_\Omega (-\psi)^k e^{-q \psi}  (dd^c \varphi)^n \leq  \tilde  Q.
 $$
 
 In particular, for any compact subset $E \Subset \Omega$, there exists a constant $R = R (E,k,n,q) > 0$ such that for any any  $\psi \in \dot{\mathcal{F}} (\Omega)$,
 $$
  \int_E e^{-q \psi}  (dd^c \varphi)^n \leq  R.
 $$
\end{corollary}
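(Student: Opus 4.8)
The plan is to feed the mass estimate of Corollary~\ref{cor:MongeAmpereMass} for the measure $\mu:=(dd^c\varphi)^n$ into the classical bound on the Monge--Amp\`ere capacity of sublevel sets of functions in $\dot{\mathcal F}(\Omega)$, and then to integrate by the layer--cake formula. Concretely, I would first fix $q'$ with $q<q'<q_n(\alpha)$ (say $q':=\tfrac12(q+q_n(\alpha))$) and apply Corollary~\ref{cor:MongeAmpereMass} with exponent $q'$: there is a constant $Q>0$ with $\mu(K)\le Q\,c_n(K,\Omega)\exp\!\big(-q'\,c_n(K,\Omega)^{-1/n}\big)$ for every compact $K\subset\Omega$, hence, by inner regularity of $\mu$ and of the capacity, for every Borel set. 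Next I would record that for $\psi\in\dot{\mathcal F}(\Omega)$ one has $c_n(\{\psi<-s\},\Omega)\le s^{-n}$ for all $s>0$. Since $t\mapsto t\,e^{-q't^{-1/n}}$ is increasing on $\R^+$ and sends $s^{-n}$ to $s^{-n}e^{-q's}$, these two facts combine (take the supremum over compact $K\subset\{\psi<-s\}$, on each of which $c_n(K)\le s^{-n}$) to
$$
\mu(\{\psi<-s\})\le Q\,s^{-n}e^{-q's},\qquad s>0 .
$$

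To bound $I(\psi):=\int_\Omega(-\psi)^k e^{-q\psi}(dd^c\varphi)^n$, I would use the layer--cake formula with the strictly increasing bijection $F(t):=t^k e^{qt}$ of $\R^+$; since $\{-\psi>s\}=\{\psi<-s\}$ this gives
$$
I(\psi)=\int_0^\infty F'(s)\,\mu(\{\psi<-s\})\,ds\le Q\int_0^\infty\big(k s^{-1}+q\big)\,s^{\,k-n}\,e^{-(q'-q)s}\,ds .
$$
The hypothesis $k>n$ is exactly what makes the integrand integrable near $s=0$ (there it behaves like $s^{\,k-n-1}$), while $q'-q>0$ takes care of $s=+\infty$; the last integral equals $k\,\Gamma(k-n)(q'-q)^{-(k-n)}+q\,\Gamma(k-n+1)(q'-q)^{-(k-n+1)}$, and I would take $\tilde Q$ to be $Q$ times this quantity. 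For the last assertion, given a compact $E\Subset\Omega$ I would split $\int_E e^{-q\psi}\,d\mu$ over $\{\psi\ge-1\}$ and $\{\psi<-1\}$: on the first set $e^{-q\psi}\le e^{q}$, so that piece is at most $e^{q}\mu(E)\le e^{q}Q\,c_n(E,\Omega)\exp(-q'c_n(E,\Omega)^{-1/n})$ by Corollary~\ref{cor:MongeAmpereMass}; on the second set $(-\psi)^k\ge1$, so that piece is at most $I(\psi)\le\tilde Q$. Adding the two yields the desired constant $R=R(E,k,n,q)$.

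The one point that requires care is the sublevel--set capacity estimate $c_n(\{\psi<-s\},\Omega)\le s^{-n}$ for $\psi\in\dot{\mathcal F}(\Omega)$: since $\psi$ need not be bounded, Lemma~\ref{lem:CapEst} cannot be applied to $\psi$ directly. I would apply it (with $m=n$ and $v\equiv0$) to the truncations $\psi_j:=\max(\psi,-j)\in\mathcal E^0(\Omega)$, which satisfy $\int_\Omega(dd^c\psi_j)^n\le\int_\Omega(dd^c\psi)^n\le1$ — the monotonicity of the Monge--Amp\`ere masses of truncations being a defining feature of the Cegrell class $\mathcal F(\Omega)$ — obtaining $t^n c_n(\{\psi_j<-s-t\},\Omega)\le\int_\Omega(dd^c\psi_j)^n\le1$; letting $s\downarrow0$ and using monotone convergence of the capacity along the increasing open sets $\{\psi_j<-s-t\}\nearrow\{\psi_j<-t\}$ gives $c_n(\{\psi_j<-t\},\Omega)\le t^{-n}$, and since $\{\psi_j<-t\}\nearrow\{\psi<-t\}$ (so any compact $K\subset\{\psi<-t\}$ lies in some $\{\psi_{j_0}<-t\}$) one concludes $c_n(\{\psi<-t\},\Omega)\le t^{-n}$. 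Everything else is a mechanical combination of Corollary~\ref{cor:MongeAmpereMass}, the monotonicity of $t\mapsto t\,e^{-q't^{-1/n}}$, and the layer--cake formula, and it produces the explicit constants $\tilde Q$ and $R$ displayed above.
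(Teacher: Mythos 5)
Your proof is correct and follows essentially the same route as the paper for the main estimate: you both integrate by the layer--cake formula, bound the sublevel--set capacity $c_n(\{\psi<-s\},\Omega)\lesssim s^{-n}$, and then push it through Corollary~\ref{cor:MongeAmpereMass} with an intermediate exponent $q<q'<q_n(\alpha)$, with convergence of the resulting one--dimensional integral secured by $k>n$ near $0$ and by $q'-q>0$ at infinity. Two points where your argument genuinely departs from the paper's. First, for the capacity estimate on sublevel sets the paper simply invokes \cite{CKZ05} (which yields $c_n(\{\psi<-t\})\le p_n\,t^{-n}\int_\Omega(dd^c\psi)^n$), while you rederive it from Lemma~\ref{lem:CapEst} applied to the truncations $\max(\psi,-j)\in\mathcal E^0(\Omega)$, using the comparison principle to control their masses and a limiting argument; this is a self--contained and correct alternative, and even sharpens the constant to $1$. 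Second, and more interestingly, for the ``in particular'' conclusion the paper bounds $\int_\Omega(-\psi)^k e^{-q\psi}(dd^c\varphi)^n$ from below by $(-\max_E\psi)^k\int_E e^{-q\psi}(dd^c\varphi)^n$ and then needs Hartogs' lemma plus the $L^1_{loc}$--compactness of $\dot{\mathcal F}(\Omega)$ to obtain a uniform positive lower bound for $-\max_E\psi$; you instead split $\int_E e^{-q\psi}\,d\mu$ over $\{\psi\ge-1\}$, where $e^{-q\psi}\le e^q$ and the mass of $\mu$ on $E$ is finite (indeed bounded by Corollary~\ref{cor:MongeAmpereMass}), and over $\{\psi<-1\}$, where $(-\psi)^k\ge 1$ so the integrand is dominated by the first estimate. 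Your splitting is more elementary, avoids the compactness machinery entirely, and also circumvents the degenerate case where $\max_E\psi$ could be $0$ (e.g.\ $\psi\equiv0\in\dot{\mathcal F}(\Omega)$), which the paper's lower bound does not handle gracefully.
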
 
\begin{proof}  Indeed fix $ k > n$, and $0 < q < q_n (\alpha)$. Then we have
$$
\int_\Omega  (-\psi)^k e^{-q \psi}  (dd^c \varphi)^n = 
 \int_0^{+\infty} (k t^{k - 1} + q t^{k}) \,  e^{q t}  d t \int_{ \{\psi < - t\}} (dd^c \varphi)^n.
$$

On the other hand, by \cite{CKZ05} there exists a constant $p_n > 0$ such that for any $\psi \in \dot{\mathcal{F}} (\Omega)$ and any $t > 0$, we have 
$$\text{c}_n  ( \{\psi < - t\}) \leq p_n t^{- n} \int_\Omega (dd^c \psi)^n \leq p_n  t^{- n}.
$$ 
Choosing  $q'$ such that $q < q' < q_n(\alpha)$ and applying Corollary \ref{cor:MongeAmpereMass} with the exponent $q'$ instead of $q$, we obtain
 \begin{eqnarray*}
\int_ \Omega (-\psi)^k  e^{-q \psi}  (dd^c \varphi)^n &\leq  & Q \int_0^{+\infty} (k t^{k - n - 1} + \varepsilon t^{k - n}) \,  e^{(q - q') t}  d t \\
&=: & \tilde{Q} (k,n,q) < + \infty, 
 \end{eqnarray*}
 since $k > n$ and $q' > q$.
 
 To prove the second statement, observe first that  if $E \Subset \Omega$ is a compact subset, for any $\psi \in  \dot{\mathcal{F}} (\Omega) $,
 $$
 \int_ \Omega (-\psi)^k  e^{-q \psi}  (dd^c \varphi)^n \geq (- \max_E \psi)^k \int_ E   e^{-q \psi}  (dd^c \varphi)^n.
 $$
 To conclude  we need the following well known facts about plurisubharmonic functions: 
 \begin{itemize}
 \item  the map $\psi \longmapsto \max_E \psi$ is continuous on $\mathcal{PSH} (\Omega)$ for the $L^1_{loc} (\Omega)$-topology by Hartogs lemma;
  \item the set $\dot{\mathcal{F}} (\Omega) \subset \mathcal{PSH} (\Omega)$ is compact for the $L^1_{loc} (\Omega)$-topology  and  $\max_E \psi < 0$ for any $ \psi \in \dot{\mathcal{F}} (\Omega)$  (see \cite{Ze09}).
 \end{itemize}
 Therefore there exists a constant $m(E,\Omega) > 0$ such that $(- \max_E \psi)^k \geq m(E,\Omega)^k$ for any $\psi \in \dot{\mathcal{F}} (\Omega) $.
\end{proof}
Local exponential integrability  of plurisubharmonic functions with respect to Borel measures with  H\"older continuous Monge-Amp\`ere potentials were first obtained in \cite{DNS10}.
\section{ Modulus of continuity of the solution}

Now we are ready to prove Theorem 3 and Theorem 4 from the introduction using Theorem 2, Corollary 3.10  and Theorem \ref{thm:ModC}.

\subsection{Proofs of Theorem 3 and Theorem 4}
Recall that we are given  a Borel measure $\mu$ on $\Omega$ such that there exists $\varphi\in \mathcal{SH}_m(\Omega)\cap\mathcal{C}^{0}(\overline\Omega)$ with $\varphi_{\mid \partial \Omega} \equiv 0$ and satisfying the inequality  $ (dd^c\varphi)^m\wedge\beta^{n-m} \geq \mu$ weakly on $\Omega$. 

The goal is first to prove that if modulus of continuity  $\kappa_\varphi$ of $\varphi$ satisfies the  Dini type condition (\ref{eq:DC1}) for $1 \leq m < n$  or (\ref{eq:DC2}) for $m=n$  respectively, then  for any boundary value datum $g \in \mathcal C^0 (\partial \Omega)$, the Dirichlet problem (\ref{eq:DirPb}) has a unique continuous solution.
Moreover  when $\mu (\Omega) < + \infty$, we will give an estimate on the modulus of continuity of the solution.

Recall that the function $ h_m $ is defined by its  reciprocal as follows $\tau = h_m (t)$ is the unique solution to the following equation:
\begin{equation} \label {eq:hm}
 h_m^{-1} (\tau) := \tau^{2m} J^{-1}_m (\tau), \, \, \, \, \, \, J_m (\tau) :=  \int_0^\tau \left[\kappa_\varphi \circ \theta_m \circ \ell_m (3^m t)\right]^{1 \slash m} \frac{d t}{t},
 \end{equation}
where $\ell_m (s)$ is defined by (\ref{eq:estimatefunction}) and $\theta_m$ is the inverse of the function $t \longmapsto t^{2m} \kappa_\varphi(t)^{1 - m} $.
  
We are going to prove the two theorems at the same time since the proofs only differ in the last step.

\begin{proof}  There are two steps in the proof.

1. {\it Existence of a continuous solution.} 
 Since $\mu\leq (dd^c\varphi)^m\wedge\beta^{n-m}$ and $\varphi = 0$ on $\partial \Omega$,   it follows from Theorem 2,  that $\mu$ is $\Gamma$-diffuse with $\Gamma (t) = t \gamma_m (t)$ and $\gamma_m (t) :=  \kappa_\varphi \circ \theta_m \circ \ell_m (t)$.
 
We claim that that  the conditions (\ref{eq:DC1}) and (\ref{eq:DC2}) of Theorem 3 and Theorem 4 respectively  imply that the Dini condition (\ref{eq:DiniConditionMu}) holds for $\gamma_m$ in both cases.

Indeed assume first that $1 \leq m < n$. Then $\ell_m (3^m t) =3^{r m} t^r$, where $1 < r < m \slash (n-m)$. By the change of variable $s = \ell_m(3^m t)$ we obtain
$$
\int_{0^+}^1 \gamma (t)^{1 \slash m} \frac{d t}{t} = \frac{1}{r} \int_{0^+}^1 \kappa_\varphi^{1 \slash m} (\theta_m (s)) \frac{d s}{s}.
$$
 Then the change of variables $x = \theta_m (s)$ allows to write $s = \theta_m^{-1} (x) = x^{2 m} \kappa_\varphi^{1 - m} (x)$ which implies $\frac{d s}{s} = 2 m \frac{dx}{x} + (1-m) d \kappa_\varphi (x).$
Then an easy computation shows that 
\begin{eqnarray*}
\int_{0^+}^1 \gamma (t)^{1 \slash m} \frac{d t}{t} &= & \frac{2 m }{r} \int_{0^+}^{\theta_m(1)} \kappa_\varphi^{1 \slash m} (x) \frac{d x}{x}  \\
&+ & \frac{ m(1-m)  }{r}\kappa_\varphi^{1 \slash m}(\theta_m(1)). 
\end{eqnarray*}

This proves that the condition (\ref{eq:DC1})  of Theorem 3 is equivalent to  the Dini condition (\ref{eq:DiniConditionMu}) for the function $\gamma = \gamma_m$.

Now assume that $m = n$. Then $\ell_n (3^n t) = e^{-b \slash 3 t^{1\slash n}}$. We set $s = \ell_n (3^n t)$. Then 
$\frac{d t}{t} = n \frac{d s}{s (-\log s)}$. Hence
$$
\int_{0^+} \gamma (t)^{1 \slash n} \frac{d t}{t} =  n \int_{0^+} \kappa_\varphi^{1 \slash n} (\theta_n (s)) \frac{d s}{s (-\log s)}.
$$
Now observe that $x = \theta_n (s)$ satisfies $s = \theta^{-1} (x) = x^{2 n} \kappa_\varphi^{1 - n} (x)$.

Since $\kappa_\varphi$ is increasing, it follows that  $s \geq c_1 x^{2n}$ and then $ x = \theta_n (s) \leq (s \slash c_1)^{1 \slash 2 n}$ for $s \in ]0,1]$. Therefore
$$
\int_{0^+}^1 \gamma (t)^{1 \slash n} \frac{d t}{t}  \leq  n \int_{0}^{a} \kappa_\varphi^{1 \slash n}  ((s\slash c_1)^{1 \slash 2 n})) \frac{d s}{s (-\log s)},
$$
Now the change of variable $x = (s\slash c_1)^{1 \slash 2 n}$ leads to the inequality

$$
\int_{0^+} \gamma (t)^{1 \slash n} \frac{d t}{t}   \leq  2 n^2 \int_{0^+} \kappa_\varphi^{1 \slash n}  (x) \frac{d x}{ x  (-\log (c_1 x))},
$$  
This shows that the condition (\ref{eq:DC2}) in Theorem 4 implies that the  Dini condition (\ref{eq:DiniConditionMu}) holds for the function $\gamma = \gamma_n$.
This proves our claim about $\gamma_n$. A more careful computation shows that actually the two conditions  (\ref{eq:DC2})  and  (\ref{eq:DiniConditionMu})  are equivalent, but we don't need that here.

 By Corollary \ref{cor:continuoussolution}, it follows that there is a unique function $u\in \mathcal{SH}_m(\Omega)\cap \mathcal{C}^{0}({\Omega}) $ such that 
$$ 
(dd^c u)^m\wedge\beta^{n-m}=\mu,
$$
in the weak sense on $\Omega$ and $ u=g $ on $\partial\Omega$.

\smallskip

{\it Step 2 : Estimation of the partial $\widehat{\kappa}$-modulus of continuity}. 
Assume that $\mu (\Omega) < + \infty$. We want to estimate the modulus of continuity of the solution $u$.

For  $0 < \delta < \delta_0$ and  denote as before by 
 $\widehat{u}_{\delta}(z)$ the mean value  of $u$ on the ball $B (z,\delta) \subset \Omega$.
By Lemma \ref{lem:approximation}, the global approximants  defined for $0 < \delta < \delta_0$ by  
$$
 \tilde{u}_{\delta}:= \left\{ \begin{array}{lcl}
\max\{\hat{u}_{\delta} -  \kappa (\delta),u \} &\hbox{on} & \Omega_{\delta}, \\
u  &\hbox{on} & \Omega\setminus\Omega_{\delta}
\end{array}\right.
$$
with $\kappa (\delta) :=  \kappa_\varphi (\delta) + \kappa_g (\sqrt{\delta}) + \delta$ satisfy the following properties:

\begin{itemize}
\item $\tilde{u}_{\delta}$ is  $m$-subharmonic and bounded on $\Omega$,
\item $\tilde{u}_{\delta}(z) = u (z)$ on $\Omega \setminus \Omega_\delta$
\item $0 \leq \tilde{u}_{\delta}(z) - u (z) \leq  \widehat{u}_\delta  (z) - u (z) \leq \tilde{u}_{\delta}(z) - u (z) + \kappa (\delta) $ for $z\in \Omega_{\delta}$.
\end{itemize}

Therefore we can apply Corollary \ref{cor:approximation} and get for $0<\delta<\delta_0$,
\begin{eqnarray} \label{eq3}
\int_{\Omega}(\tilde{u}_{\delta}-u)^m d\mu & = &   \int_{\Omega_\delta}(\tilde{u}_{\delta}-u)^m d\mu \nonumber \\
  &\leq& C_m \kappa_\varphi \circ \theta_m \left(D \delta\right).
\end{eqnarray}
Here $D >0$ is a uniform contant depending only on $m, n$ and uniform bounds on $g$ and 
$\varphi$.
 
  By Theorem \ref{thm:stability} it follows that 
 \begin{eqnarray} \label{eq4}
\sup_{\Omega}(\tilde{u}_{\delta}-u)
 & \leq & B \,  h_{m} (2^m e^m \Vert \tilde u_\delta - u\Vert_{m,\mu}^m), 
 \end{eqnarray}
 where $ B>0$ is a uniform constant,  and $h_m =  h_{\Gamma_m} $ is defined by the formula (\ref{eq:hm}).

 Therefore  from equation  (\ref{eq3}) and (\ref{eq4}) we deduce that
 \begin{equation*} 
\sup_{\Omega}( \tilde{u}_{\delta}-u)  \leq    B \, h_m \left[C_m \kappa_\varphi \circ \theta_m (D \delta)\right], 
 \end{equation*}
 which implies
 \begin{equation} \label{eq:Finaleq}
 \sup_{\Omega_{\delta}}( \tilde{u}_{\delta}-u)  \leq  B  \,   h_m \left[C_m \kappa_\varphi \circ \theta_m  \left( D \delta \right)\right].
  \end{equation}
 
 
Recall that $ \hat{u}_{\delta} - u \leq \tilde u_\delta - u + \kappa (\delta) $ on $\Omega_\delta$. Hence  for $0 < \delta < \delta_0$

  \begin{equation} \label{eq5}
   \sup_{\Omega_{\delta}}( \hat{u}_{\delta}-u) \leq \sup_{\Omega}(\tilde{u}_{\delta}-u)+ \kappa (\delta) .
  \end{equation}
  Using (\ref{eq:Finaleq}) and (\ref{eq5}) we finally get for $0 < \delta < \delta_0$
  \begin{equation} \label{eq:MC-estimate}
   \sup_{\Omega_{\delta}}( \hat{u}_{\delta}-u) \leq B \tilde{\kappa}_m (\delta),
 \end{equation}
 where 
\begin{equation} \label{eq:MC-Solution}
\tilde \kappa_m (\delta) :=   h_m \left[C_m \kappa_\varphi \circ \theta_m  \left(D\delta \right)\right]  +  \kappa_\varphi (\delta) + \kappa_g (\sqrt{\delta}),
 \end{equation}
 and $h_m$ is defined by (\ref{eq:hm}).
 \end{proof}

\smallskip

 When $g$ is $C^{1,1} (\partial \Omega)$, we can improve the estimates on the modulus of continuity  of the solution in Theorem 3 and Theorem 4. 
\begin{theorem} \label{thm:ModC+} Under the same assumptions as Theorem 3 (resp. Theorem 4)  and assume moreover  that $g \in C^{1,1} (\partial \Omega)$. Then the solution $u = U_{g,\mu}$ is continuous and its $\hat{\kappa}$-modulus of continuity satisfies the following estimate for  $0 < \delta < \delta_0$,
$$
   \sup_{\Omega_{\delta}}( \hat{u}_{\delta}-u) \leq B \, \, {\kappa'}_m (\delta),
 $$
 where $ B > 0$ is a uniform constant and 
 $${\kappa'}_m (\delta):=  h_m \left[C_m \kappa_\varphi \circ \theta_m  \left(D\delta^2 \right)\right]  +  \kappa_\varphi (\delta).
 $$ 
\end{theorem}
\begin{proof}  Indeed, using Corollary \ref{cor:improvement1} at the end of the previous proof, we get a better inequality i.e. for $0 < \delta < \delta_0$, we have
 $$
 \int_{\Omega}(\tilde{u}_{\delta}-u)^m d\mu   \leq  C_m \kappa_\varphi \circ \theta_m \left(D \delta^2\right).
 $$
 Then the inequality  (\ref{eq4})  becomes for $0 < \delta < \delta_0$,
 $$
   \sup_{\Omega_{\delta}}( \hat{u}_{\delta}-u) \leq B  \,   h_m \left[C_m \kappa_\varphi \circ \theta_m  \left( D \delta^2\right)\right]
 $$
 The rest of the proof is done in the same way and leads to the required estimate by observing that in this case the maximal $m$-subharmonic extension of $g$ is Lipschitz in $\bar \Omega$. Indeed $g$ extends as  a function 
 $g \in C^{1,1} (\partial \Omega$. Then as in the proof of Corollary \ref{cor:improvement1} we construct  two  $m$-subharmonic functions $v$ and $-w$ such that $v, w  \in C^{1,1} (\bar \Omega)$, $v \leq w$ in $\Omega$ and $v = g =  w$ in $\partial \Omega$. 
 Then  from the proof of Lemma \ref{lem:approximation}  we see that we can replace $\kappa_g (\sqrt{\delta})$ by $\delta$ in Corollary \ref{cor:approximation}.
\end{proof}

\subsection{Some consequences}
Let us state  corollaries of Theorem 3 and Theorem 4 to show how the  estimates obtained so far are more precise compared to previous ones (see \cite{KN20b}, \cite{BZ20}).

\begin{corollary} \label{cor:HolderHE}
Let $\Omega \Subset \C^n$ be a  bounded strictly $m$-pseudoconvex  domain  with $1 \leq  m \leq n$ and $\mu $ a positive Borel measure on $\Omega$. Assume that there exists $\varphi\in \mathcal{SH}_m(\Omega)\cap\mathcal{C}^{\alpha}(\overline\Omega)$ such that  
\begin{equation} \label{eq:subsol2}
 \mu \leq (dd^c\varphi)^m\wedge\beta^{n-m}, \, \, \, \mathrm{weakly \, \, on} \, \,  \Omega \, \, \, \mathrm{and} \, \, \, \varphi_{\mid{\partial \Omega}} \equiv 0.
\end{equation}

 Then for any continuous function $g \in \mathcal{C}^{2 \alpha} (\partial \Omega)$, there exists a unique function $U = U_{g,\mu} \in \mathcal{SH}_m (\Omega) \cap \mathcal{C}^{0} (\bar{\Omega})$ such that  
 $$
 (dd^c U)^m\wedge\beta^{n-m} = \mu, \, \, \, \mathrm{and} \, \, \, U = g \, \, \, \mathrm{on} \, \, \,  \partial \Omega.
 $$
 Moreover if $\mu (\Omega) < + \infty$, $U \in \mathcal{C}^{\tilde{\alpha}} (\bar{\Omega})$ for any $\tilde{\alpha} <  \tilde{\alpha}_m$,
where

 \begin{equation} \label{eq:Holderexp}
 \tilde \alpha_m := \frac{ \tilde r   \alpha^2}{ m \tilde{m} \left[ \tilde m   + 2 \alpha \tilde r\right]},
 \end{equation}
and  $ \tilde r:= \frac{m}{n-m}$ and $\tilde m := 2 m + \alpha(1-m)$.
 \end{corollary}

 \begin{proof}  We want to apply  Theorem 3.  Here we have $\kappa_\varphi (t) = \kappa_0 t^\alpha$, $\ell_m (t) = t^{r}$ with $0 < r < m \slash (n-m)$. By Theorem 2, $\mu$ is $\Gamma$-diffuse with $\Gamma (t) = A t  \kappa_\varphi \circ \theta_m \left(\ell_m (t)\right)$ and $\theta_m$ is the inverse  of the function $t \longmapsto t^{2m} \kappa_\varphi (t)^{1 - m} = t^{2m + \alpha (1-m)}$. Thus $\Gamma (t) = A  t^{1 + r \alpha \slash \tilde m}$.

 Then $ J_m (\tau) = A^{1\slash m} \frac{ m \tilde m}{r \alpha} \tau^{ \alpha r \slash m \tilde m} $ and 
 $h^{-1}_m (t) =A' (m,\alpha)  \, \,  t^{2 m +  m \tilde m \slash  \alpha r}$.  
 
 Finally  by (\ref{eq:MC-Solution}) the $\widehat{\kappa}$-modulus of continuity of the solution is dominated as follows 
 
$$
\widehat{\kappa}_U (\delta) \leq  C'(\alpha,m,n,\Omega) \delta^{\frac{ r \alpha^2 }{ m \tilde m \left[  \tilde m + 2 \alpha  r \right]}}. 
 $$ 
 As before  we apply Lemma \ref{lem:sup-mean} to concude.  \end{proof}
 
 \begin{corollary}  \label{cor:HolderMA} Under the assumption of Theorem 4, with $\varphi \in C^\alpha (\bar{\Omega}$ with $0 < \alpha \leq 1$ and $g \in \mathcal{C}^{2 \alpha} (\partial \Omega)$,  the solution $U := U_{g,\mu}$ to the Dirichlet problem is H\"older continuous in $\bar \Omega$ and its modulus of continuity satisfies the  following  estimate
$$
\kappa_U (\delta) \leq C \delta^{\alpha \slash  2 n \tilde n} (- \log \delta)^{1\slash 2},
$$  
where $\tilde n := ( 2-\alpha) \,  n + \alpha $ and $C > 0$ is a positive uniform constant. 
   \end{corollary}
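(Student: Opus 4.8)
The plan is to specialise the quantitative part of Theorem 4 to the present situation $\kappa_\varphi(t)=\kappa_0 t^\alpha$, $\kappa_g(t)\le C t^{2\alpha}$, to compute explicitly all the one–variable auxiliary functions that enter the estimate (\ref{eq:MC-Solution}), and then to convert the resulting bound on the mean–value modulus $\widehat\kappa_U$ into a bound on the genuine modulus $\kappa_U$ by means of Lemma~\ref{lem:sup-mean}. As in the quantitative part of Theorem 4, we work under the finite mass assumption $\mu(\Omega)<+\infty$.

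First I would record the shape of the building blocks. Since $m=n$, the function $\ell_n$ of (\ref{eq:estimatefunction}) is $\ell_n(t)=\exp(-b\,t^{-1/n})$ with $0<b<2n$ fixed; the map $t\mapsto t^{2n}\kappa_\varphi(t)^{1-n}$ is, up to the harmless constant $\kappa_0^{1-n}$, the map $t\mapsto t^{\tilde n}$ with $\tilde n=2n+\alpha(1-n)=(2-\alpha)n+\alpha$, so its inverse is $\theta_n(s)=s^{1/\tilde n}$, and hence
\[
\vartheta_n(t):=\kappa_\varphi\circ\theta_n\circ\ell_n(t)=\kappa_0\,\exp\!\big(-q\,t^{-1/n}\big),\qquad q:=\frac{\alpha b}{\tilde n}.
\]
By Theorem 2 the measure $\mu$ is $\Gamma$–diffuse with $\Gamma(t)=B\{\vartheta_n(t)+\vartheta_n(t)^n\}\,t$, and since $\vartheta_n(t)^n=o(\vartheta_n(t))$ as $t\to0^+$, the function $\gamma_n(t):=\Gamma(t)/t$ is, for small $t$ and up to a multiplicative constant, comparable to $\vartheta_n(t)$; the Dini condition (\ref{eq:DiniConditionMu}) for $\gamma_n$ was already verified in the proof of Theorem 4, so the solution $U$ exists, is continuous on $\bar\Omega$, and the estimate (\ref{eq:MC-Solution}) is available.

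Next I would compute $J_n$, $J_n^{-1}$ and $h_n$ by elementary asymptotics, organised as one–sided inequalities. Substituting $u=t^{-1/n}$ in $J_n(\tau)=\int_0^\tau[\vartheta_n(t)]^{1/n}\,dt/t$ and using the crude bound $\int_a^\infty e^{-cu}u^{-1}\,du\le e^{-ca}/(ca)$ gives $J_n(\tau)\le C_0\,\tau^{1/n}\exp(-(q/n)\tau^{-1/n})$ for $0<\tau\le1$; inverting this inequality yields $J_n^{-1}(s)\ge c_0(-\log s)^{-n}$ for $s$ small, whence $h_n^{-1}(t)=t^{2n}J_n^{-1}(t)\ge c_0\,t^{2n}(-\log t)^{-n}$. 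Since $t\mapsto c_0t^{2n}(-\log t)^{-n}$ is increasing near $0$ with asymptotic inverse $x\mapsto C_1x^{1/(2n)}(-\log x)^{1/2}$, monotonicity of $h_n$ gives the key bound
\[
h_n(x)\ \le\ C_1\,x^{1/(2n)}\,(-\log x)^{1/2},\qquad 0<x<x_0 .
\]
Feeding this into (\ref{eq:MC-Solution}) with $m=n$, where $\kappa_\varphi\circ\theta_n(D\delta^2)=\kappa_0(D\delta^2)^{\alpha/\tilde n}$, shows that the $h_n$–term is bounded by $C\,\delta^{\alpha/(n\tilde n)}(-\log\delta)^{1/2}$. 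As $\tilde n\ge 2$, one has $\alpha/(n\tilde n)\le\alpha/2<\alpha\le 1$, so this term dominates $\kappa_\varphi(\delta)=\kappa_0\delta^\alpha$, $\kappa_g(\sqrt\delta)\le C\delta^\alpha$ and $\delta$ as $\delta\to0^+$; hence $\widehat\kappa_U(\delta)\le C\,\delta^{\alpha/(n\tilde n)}(-\log\delta)^{1/2}$.

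It then remains to upgrade this to a bound on $\kappa_U$. The function $\kappa(t):=t^{\alpha/(n\tilde n)}(-\log t)^{1/2}$ is a logarithmic Hölder modulus of continuity with positive Hölder exponent, so it satisfies condition (\ref{eq:MCcondition}); moreover $U$ is $\kappa$–continuous near $\partial\Omega$, since $\varphi+w_g\le U\le w_g$ (with $w_g$ the maximal $m$–subharmonic function with boundary datum $g$), $\varphi\in\mathcal{C}^\alpha(\bar\Omega)$ and $\kappa_{w_g}(\delta)\le\kappa_g(\sqrt\delta)\le C\delta^\alpha$, while $\delta^\alpha\le\kappa(\delta)$ near $0$. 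Since the bound on $\widehat\kappa_U$ is exactly condition $(i)$ of Lemma~\ref{lem:sup-mean} for this $\kappa$, the lemma yields $\kappa_U\le C\kappa$, i.e. the asserted estimate. The main obstacle is precisely the asymptotic inversion in the third step: because $J_n$ carries the exponential weight $\ell_n$, one only controls $J_n^{-1}$ and $h_n$ up to sub‑logarithmic corrections, so the whole chain must be kept as compatible one‑sided inequalities valid for small $\delta$ rather than as exact equivalences, and one must check at the end that these corrections are genuinely absorbed into the clean factor $(-\log\delta)^{1/2}$.
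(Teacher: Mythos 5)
Your proof is correct and follows essentially the same route as the paper: you compute $\theta_n$, $\vartheta_n$ and $J_n$ explicitly for $\kappa_\varphi(t)=\kappa_0t^\alpha$, bound $J_n$ from above by a substitution $s=t^{-1/n}$, invert (one-sidedly) to obtain $h_n(x)\le C\,x^{1/(2n)}(-\log x)^{1/2}$, plug this into the formula (\ref{eq:MC-Solution}) to get the stated $\widehat\kappa_U$ bound, and finish with Lemma~\ref{lem:sup-mean}. The only cosmetic differences are that you keep the sub-exponential factor $\tau^{1/n}$ in the bound for $J_n(\tau)$ where the paper absorbs it into a constant, and that you are somewhat more explicit than the paper about the boundary $\kappa$-continuity hypothesis of Lemma~\ref{lem:sup-mean}.
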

Here $\kappa_U$ is the usual modulus of continuity of $U$ on $\bar{\Omega}$ defined as follows:
\begin{equation}
\kappa_U(\delta) := \sup \{ \vert U (z) - U(z') \vert \, ; \, z, z' \in  \bar{\Omega}, \vert z - z'\vert \leq \delta\}
\end{equation}

The precise relationship between $\kappa_U$ and $\widehat{\kappa}_U$ was discussed in section 2.3 (see \cite{Ze20} for more details).  
\begin{proof}
We want to apply  Theorem 4.  Here we have $\kappa_\varphi (t) = \kappa_0 t^\alpha$, $\ell_n (3^n t) =e^{- b' t^{-1\slash n}}$ with $0 < b' = b \slash 3 < 2 n\slash 3$. By Theorem 2, $\mu$ is $\Gamma$-diffuse with $\Gamma (t) = A_0 t  \kappa_\varphi \circ \theta_n \left(\ell_n (3^n t)\right)$ and $\theta_n$ is the inverse  of the function 
$t \longmapsto t^{2n} \kappa_\varphi (t)^{2 - n} = t^{2n + \alpha (1-n)}$.  Thus 
$$
\Gamma (t) = A_0  t  e^{-b_1 t^{-1\slash n}},
$$
where $b_1:= \alpha b' \slash [2 n + \alpha (1-n)]$.
 
 Then for $\tau > 0$, we have
 $$
 J_n (\tau) = A_0^{1\slash n}  \int_0^\tau  e^{-b_2 t^{-1\slash n}} \frac{dt}{t},
 $$
 where $b_2 := b_1\slash n$. 
 
  By the change of variable $s = t^{-1\slash n}$  we get
 $$
  J_n (\tau) = n A_0^{1\slash n}  \int_{\tau^{-1\slash n}}^{+ \infty}  e^{-b_2 s} \frac{ds}{s}.
 $$
  Fix $\tau_0 > 0$. Then for $0< \tau < \tau_0$ we have  
  
  $$
  J_n (\tau) \leq A_1  \int_{\tau^{-1\slash n}}^{+ \infty}  e^{-b_2 s} {ds} = A_2  e^{-b_2 \tau^{-1\slash n}},
  $$
  where $A_1 := n A_0^{1\slash n} \tau_0^{1\slash n}$ and $A_2 := A_1 \slash b_2$.
  
 Therefore given $\varepsilon > 0$, there exists $y_0 > 0$ and a constant $A_3 > 0$ such that  for $0 < y < y_0$ we have
  $$
h^{-1}_n (y) :=  y^{2n}  J_n^{-1} (y) \geq \frac{b_2 y^{2 n}}{\left(- \log (y\slash A_2)\right)^n}\cdot
  $$
An easy computation shows that there exists $x_0 > 0$ small enough such that for $0< x < x_0$,  $h_n (x)  \leq A_3 x^{1\slash 2 n} \, (- \log x)^{1 \slash 2}$.

By Theorem 4, the $\widehat{\kappa}$-modulus of continuity of the solution $U$ to the Dirichlet problem (\ref{eq:DirPb}) in this case satisfies $\widehat{\kappa}_U (\delta) \leq \widehat{\kappa} (\delta)$, for $0 < \delta < \delta_0$,  where
$$
\widehat{\kappa} (\delta) := A_4 \delta^{\alpha \slash 2 n  \tilde n} (- \log \delta)^{1 \slash 2}.
$$

Now we need to apply Lemma \ref{lem:sup-mean} to conclude. Indeed it's clear that the modulus of continuity $\widehat{\kappa}$ obtained above satisfies the condition  (\ref{eq:fullkappa}). Moreover the function $U$ is  $\widehat{\kappa}$-continuous near the boundary by Corollary \ref{cor:HolderHE}. \end{proof}

\begin{remark}
As before we can improve the H\"older exponent in the previous corollaries when $g \in C^{1,1} (\partial \Omega)$. Indeed, using Theorem \ref{thm:ModC+},
 we conclude that in Corollary \ref{cor:HolderHE} ,  the critical exponent is $2 \tilde \alpha_m$, while in Corollary \ref{cor:HolderMA} the modulus of continuity of the solution satisfies the following inequality :
$$
\kappa_U (\delta)  \leq C \delta^{\alpha \slash  n \tilde n} (- \log \delta)^{1\slash 2},
$$
for $0 < \delta < \delta_0$.
\end{remark}

\smallskip

\smallskip

 {\bf Acknowledgements:} This work is a natural continuation of the work done recently by  Amel Benali and the second author on the same subject. Some results and ideas from the latter  has been used here.  

It has been completed after the recent works of S. Ko\l odziej and N. Cuong Nguyen \cite{KN20a}, \cite{KN20b} which were a source of fruitful inspiration. We are grateful to them.

We are indebted to Chinh Hoang Lu for a careful checking of the first version of this paper and for useful comments. We thank Ngoc Cuong Nguyen for useful remarks which made it possible  to correct the statement of Corollary 4.2 and Corollary 4.3 in the previous version of this paper. 
We also thank Eleonora Di Nezza and Vincent Guedj  for useful discussions.                                                                                                                             

Finally we would like to thank the referee for a very careful reading and many good suggestions that helped to improve 
the presentation of the paper.

\end{document}